\newtheorem{theorem}{Theorem}[section]
\newtheorem{lemma}[theorem]{Lemma}
\newtheorem{corollary}[theorem]{Corollary}
\newtheorem{defn}[theorem]{Definition}
\newtheorem{remark}[theorem]{Remark}
\newcommand{\ve}{\varepsilon}
\newcommand\cA{{\mathcal A}}
\newcommand\cB{{\mathcal B}}
\newcommand\cL{{\mathcal L}}
\newcommand{\BB}{\mathcal{B}}
\newcommand{\FF}{\mathcal{F}}
\newcommand{\HH}{\mathcal{H}}
\newcommand{\JJ}{\mathcal{J}}
\newcommand{\LL}{\mathcal{L}}
\newcommand{\RR}{\mathcal{R}}
\newcommand{\ZZ}{\mathcal{Z}}
\def\D{\mathrm {d}}
\newcommand{\sgn}{{\rm sgn}}
\DeclareMathOperator{\supp}{supp}
\newcommand{\real}{\mathbb{R}}
\newcommand{\complex}{\mathbb{C}}
\newcommand\bR{{\mathbb R}}
\newcommand\bZ{{\mathbb Z}}
\DeclareMathOperator{\sign}{sign}
\begin{document}

\title[{fractional susceptibility function of piecewise expanding maps}]{On the fractional susceptibility function of piecewise expanding maps}
\author[M. Aspenberg, V. Baladi, J. Lepp\"anen, and T. Persson]{Magnus 
Aspenberg\textsuperscript{$\dagger$}, Viviane Baladi\textsuperscript{$\dagger\dagger$} , Juho Lepp\"anen\textsuperscript{$\dagger\dagger$}, \\ and Tomas Persson\textsuperscript{$\dagger$}}
\address{$\dagger$Centre for Mathematical Sciences, Lund University, 
Box 118, 221 00 Lund, Sweden}
\email{magnusa@maths.lth.se } 
\email{tomasp@maths.lth.se}

\address{$\dagger\dagger$Laboratoire de Probabilit\'es, Statistique et Mod\'elisation (LPSM), CNRS, \phantom{SORBONNE}
\phantom{$\dagger\dagger$} Sorbonne Universit\'e, Universit\'e
de Paris, 4 Place Jussieu, 75005 Paris, France}
\email{baladi@lpsm.paris}

\email{lepp.juho@gmail.com}

\date{\today}
\thanks{We are grateful to Daniel Smania, some of  whose ideas in the collaboration
	\cite{BS3} were very useful here.
Part of  this work was carried out at the Centre for Mathematical Sciences, Lund University,
during VB's  Knut and Alice Wallenberg Guest Professorship. VB's and JL's research is supported
by the European Research Council (ERC) under the European Union's Horizon 2020 research and innovation programme (grant agreement No 787304).}

\begin{abstract}
We associate to a perturbation $(f_t)$ of a (stably
mixing) piecewise expanding unimodal map $f_0$
a two-variable fractional susceptibility function $\Psi_\phi(\eta, z)$,
depending also on a bounded observable $\phi$.
For fixed $\eta \in (0,1)$, we show that the function  $\Psi_\phi(\eta, z)$
is holomorphic in a disc $D_\eta\subset \complex$
centered at zero of radius $>1$, and that $\Psi_\phi(\eta, 1)$
is the Marchaud fractional derivative of order $\eta$  of the function
$t\mapsto \RR_\phi(t):=\int \phi(x)\, d\mu_t$, at $t=0$, where $\mu_t$ is the unique absolutely
continuous invariant probability measure of $f_t$. 
In addition, we show that $\Psi_\phi(\eta, z)$ admits a holomorphic extension to
the domain $\{\, (\eta, z) \in \complex^2\mid 0<\Re \eta <1, \, z \in D_\eta \,\}$.
Finally, if the perturbation $(f_t)$ is horizontal, we prove that
 $\lim_{\eta \in (0,1), \eta \to 1}\Psi_\phi(\eta, 1)=\partial_t
\RR_\phi(t)|_{t=0}$.
\end{abstract}
\maketitle

\section{Introduction}

\subsection{Linear response and violation thereof}
\label{forapp}

Response theory describes how the ``physical measure'' $\mu_t$
of a dynamical system $f_0$ responds to perturbations $t\mapsto f_t$, given a  class
of observables (test functions)
$\phi$. 
Classically, one studies \emph{linear response,}
where the goal is to express the derivative of
$\RR_\phi(t ):=\int \phi \, \D \mu_t$, for a fixed $\phi$ in a suitable  class, 
in terms of  ($\phi$ together with) $f_0$, the vector field $v_0:= \partial_t f_t|_{t=0}$, and the measure $\mu_0$.

Linear response was first investigated \cite{Co, KKPW,Ru,GL} for
smooth hyperbolic dynamics (Anosov or Axiom~A).  In the smooth mixing
hyperbolic case, the physical measure $\mu_t$ is the SRB measure
\cite{LSY} and corresponds to the fixed point of a transfer operator
$\LL_t$ (whose dual preserves Lebesgue measure) on a suitable Banach
space $\BB$ (see e.g.\ \cite{BaladiZeta}). This fixed point is a
simple isolated eigenvalue in the spectrum of $\LL_t$, and linear
response can be proved via perturbation theory for simple eigenvalues
(see e.g.\ \cite[\S2.5 and \S5.3]{BaladiZeta}).

To avoid technicalities, we write the key formulas in the easy case
of smooth expanding circle maps (see e.g.\  \cite{Seoul} for more
details): Then, the physical measure is the unique absolutely continuous invariant
probability measure, that is, $\mu_t =\rho_t \, \D x$, with
$\rho_t$ smooth. The operator $\LL_t$ acts on smooth functions, and 
we have the trivial but key identity
\begin{equation}\label{key}
\rho_t-\rho_0= (I-\LL_t)^{-1}(\LL_t -\LL_0 )\rho_0 \, .
\end{equation} 
Next, assuming that $v_0=X_0\circ f_0$,
it is not hard to show that
\[
\lim_{t \to 0} \frac{(\LL_t -\LL_0 )\rho_0}{t} = -(X_0 \rho_0)'\, .
\]
Then, since $\int \phi \LL^k_0(\psi)\,  \D x= \int (\phi\circ f_0^k) \psi\,  \D x$, we have,\footnote{Note that $\int_{S^1} (X_{0}\rho_{0})' \, \D x=0$ because the circle has no boundary.} for continuous $\phi$ (say),
\[
 \int \phi \cdot 
 (1-\LL_{0})^{-1} (X_{0}\rho_{0})'  \, \D x
=\sum_{k=0}^\infty \int  \phi \cdot 
  \LL^k_{0} (X_{0}\rho_{0})' \,  \D x=
\sum_{k=0}^\infty \int  (\phi \circ f_0^k)  (X_{0}\rho_{0})'\, 
\D x \, .
\]
Finally, using \eqref{key}, we get the \emph{fluctuation--dissipation
  formula} (expressing the derivative as a Green--Kubo \cite{K} sum of
decorrelations)
\begin{align}
\label{FDT}
\partial_t \RR_\phi(t)|_{t=0} =\partial_t (\int \phi \, \D \mu_t )|_{t=0}
&=-\sum_{k=0}^\infty \int  (\phi \circ f_0^k)  (X_{0}\rho_{0})'\, 
\D x \, ,
\end{align}
and, if $\phi$ is differentiable,
integrating by parts, we get the \emph{linear response formula}
\begin{align}
\partial_t \RR_\phi(t)|_{t=0}=\partial_t (\int \phi \, \D \mu_t )|_{t=0}
\label{LRF} & =
 \sum_{k=0}^\infty \int    (\phi \circ f_{0}^k)' 
\cdot 
 (X_{0} \, \rho_{0}) \, \D x\, .
\end{align}
Exponential decay of the series in the right-hand side of \eqref{LRF}
is not as apparent as in \eqref{FDT}, since the derivative $(\phi
\circ f_{0}^k)'(x)=\phi'(f_{0}^k(x)) \cdot(f_0^k)'(x)$ grows
exponentially. However, the presence of this derivative should be
expected\footnote{See e.g.\ the heuristic argument leading to
  \cite[(2)]{Ru2}.} when describing response, and Ruelle \cite{Ru1}
pointed out that it was meaningful to view the linear response formula
\eqref{LRF} as the value at $z=1$ of a natural power series, the
\emph{susceptibility function} of $f_0$ and $v_0=X_0\circ f_0$.  In
the present smooth one-dimensional expanding case, setting
$z=e^{i\omega}$, the susceptibility function $\Psi_\phi(z)$ is the
Fourier transform of the \emph{response function} $k\mapsto \int X_0
(\phi \circ f_0^k)' \rho_0 \, \D x$, that is:
\begin{equation}\label{SF}
\Psi_\phi(z):=
\sum_{k=0}^\infty z^k  \int    (\phi\circ  f_0^k)' \, (X_0 \rho_0) \, \D x \, .
\end{equation}
We have, integrating by parts,
\[
\Psi_\phi(z)= - \sum_{k=0}^\infty z^k  \int    (\phi \circ f_0^ k) \, (X_0 \rho_0)' \, \D x\, ,
\]
so that exponential mixing implies
that the susceptibility function $\Psi_\phi(z)$ is holomorphic in a disc of radius larger
than one, using that $X_0 \rho_0$ is smooth.

\medskip

In situations when the map $t\mapsto \RR_\phi(t)$ is not differentiable (either due to bifurcations in the
dynamics \cite{Ma, Baladi2007,BS, BBS}, or to singularities 
\cite{BKL, Po} of the test function  $\phi$),
it is natural to consider \emph{fractional response,} i.e., to investigate weaker moduli of continuity
of this map.
The simplest situation where linear response breaks down is that of piecewise
expanding unimodal maps. In this case, the transfer operator has
a spectral gap when acting on $BV$. However,  the derivative $\rho_0'$ of
the invariant density $\rho_0$ involves a sum of Dirac masses,
and thus does not belong to $BV$, or to a space on which decorrelations
are summable. Keller  \cite{Ke} showed in 1982 that
$|\rho_t-\rho_0|_{L^1}= O(|t||\log |t||)$.
 Examples of families $(f_t)$ and smooth functions $\phi$ such that 
 $|\RR_\phi(t)-\RR_\phi(0)|\ge |t||\log |t||$ were described in \cite{Ma} and
 \cite[Theorem~6.1]{Baladi2007}.
 (See also the previous work of Ershov \cite{Er}.)
Baladi and Smania \cite{BS} showed that if the family $f_t$ is \emph{tangential}
to\footnote{We refer to  the beginning
of \S\ref{proofcor} for a definition of tangentiality, which
is equivalent to the 
\emph{horizontality} condition  \eqref{horr} on $v_0$.}
the topological class of $f_0$, then for any continuous
function $\phi$, the map $\RR_\phi(t)$
is differentiable at $t=0$. They also showed that when
horizontality does not hold, then there exist
smooth observables $\phi$ such that $\RR_\phi(t)$
is \emph{not} Lipschitz at $t=0$.
More recently, de Lima and Smania  \cite{dLS} showed  central limit theorems which imply that
for a generic map $f_0$, if $v_0=\partial_t f_t|_{t=0}$ is not horizontal,
then for a generic  $\phi$, the map $t \mapsto \int \phi \, \D \mu_t$
cannot be Lipschitz on a set of parameters $t$ of positive Lebesgue measure.
(See also \cite{C} for a related result.)

In the piecewise expanding unimodal case, the susceptibility function 
$\Psi_\phi(z)$ can also be defined by the formal power series \eqref{SF}, and it
has been studied in \cite{Baladi2007, BS, BMS}.
In  particular \cite{BMS}, if the postcritical orbit
is dense (a generic condition) then  $\Psi_\phi(z)$ has a strong natural boundary
on the unit circle, while
if the perturbation $v_0=X_0\circ f$ is horizontal
and, in addition (a generic condition) the postcritical orbit
is Birkhoff typical, then
 the nontangential limit of $\Psi_\phi(z)$ as $z$ tends to $1$ 
 coincides with $\partial_t\RR_\phi(t)|_{t=0}$.

\smallskip
In the present paper, we introduce and study a two-variable fractional susceptibility function
$\Psi_\phi(\eta, z)$, for $\Re \eta \in (0,1)$,  in
the setting of piecewise expanding unimodal maps. The initial motivation
for this work comes from the following paradox:

In 2005, Ruelle and Jiang considered  \cite{Ru0, JR}
 finite Misiurewicz--Thurston (MT)
parameters $t_0$ in the quadratic family $f_t(x)=t-x^2$.
By definition of MT, there exists a repelling periodic
point $x_0$ and $M\ge 2$ such that $f_{t_0}^M(0)=x_0$.
Such  parameters $t_0$ are  Collet--Eckmann, so $f_{t_0}$ admits
a unique absolutely continuous invariant probability measure $\mu_{t_0}$.
The map $f_{t_0}$ also has a finite Markov partition, which simplifies the analysis.
Ruelle and Jiang proved that,
for any $C^1$ observable $\phi$, the susceptibility
function  $\Psi_\phi(z)$ defined by \eqref{SF} is meromorphic in the whole complex plane, and
that $z=1$ is not a pole. Since $\Psi_\phi(1)$ is the natural candidate
for the derivative of $\RR_\phi(t)$,  this raised the hope that
there could exist a ``large'' subset $\Omega$ 
of Collet--Eckmann parameters, containing $t_0$, and  such that $t\mapsto \RR_\phi(t)$
would be differentiable in the sense of Whitney on $\Omega$ at $t_0$.
However, Baladi, Benedicks, and Schnellmann  \cite{BBS}
later showed that for any mixing   
(non horizontal\footnote{All MT parameters are  non horizontal by \cite{Avila2002} or \cite{Levin2002}.})
MT parameter $t_0$, there exist a $C^\infty$ observable $\phi$, a sequence
$t_n \to t_0$ of Collet--Eckmann parameters (with bounded constants), and
 $C>1$ 
such that
\begin{equation}\label{breakt}
\frac{ \sqrt{|t_n-t_0|}}
{C} \le |\RR_\phi(t_n)-\RR_\phi(t_0)| \le C \sqrt{|t_n-t_0|}
\, , \quad  \forall n \, .
\end{equation}
It is not known whether $t_0$ is a Lebesgue density point
in the set of $t_n$ such that \eqref{breakt} holds.
Also, the analogue of the de Lima--Smania \cite{dLS}
central limit theorems  is not known in this setting. However, we expect
that a similar CLT holds, and in particular
that  $\Psi_\phi(1)$ \emph{cannot} be interpreted as the derivative
of $\RR_\phi(t)$ in the sense of Whitney on a set
of parameters containing $t_0$ as a Lebesgue density point.
The fact that $\Psi_\phi(z)$ is holomorphic at $z=1$ can thus be viewed as a
paradox. 

\medskip

Baladi and Smania  \cite{BS3}
very recently introduced two-variable fractional susceptibility
functions $\Psi_\phi(\eta,z)$ for the quadratic family, with the goal of
resolving this paradox. 
The piecewise expanding
case is a toy model for the quadratic setting, and indeed, several ideas previously
developed for piecewise expanding
families \cite{BS} were crucial to obtain the breakthrough result \eqref{breakt}
of \cite{BBS} for the
quadratic family. Although  we believe it is interesting in its own right,
the analysis carried out in the present paper for this
toy model  can also be viewed as  a ``proof of concept,'' establishing
the feasibility of the fractional susceptibility function approach. 

\subsection{Informal statement of the results}
\label{1.2}
We next describe briefly our main results.
It is well-known that there is no canonical notion of a fractional derivative, see \cite{sam}
and \cite{MR} for a presentation of the theory.
We use here the Marchaud fractional derivative, defined 
for suitable functions $g$ by
\begin{align*}
  (M^\eta_t g (t))|_{t=0} &
 = \frac{\eta}{2 \Gamma(1-\eta)}
  \int_{-\infty}^\infty \frac{g (t) - g(0)}{|t|^{1+\eta}} \sign(t)
    \, \D t \, ,
\end{align*}
where $\Gamma$ denotes Euler's Gamma function. Our motivation for
using this particular fractional derivative is threefold: First,
$M^\eta g$ can be well-defined even if $g$ does not decay at infinity.
Second, the Marchaud fractional derivative of a constant function
vanishes, while this is not the case for other fractional derivatives,
in particular for the Bessel potential derivative defined by
$\FF^{-1}(1+|\xi|^2)^{\eta/2}\FF g$, where $\FF$ is the Fourier
transform.  Finally, the expression of Marchaud derivatives in terms
of differences $g(t)-g(-t)=g(t)-g(0) + g(0)- g(-t)$ is convenient in
view of \eqref{key}. Nevertheless, we expect that fractional
susceptibility functions defined via (e.g.) Bessel, or
Riemann--Liouville fractional derivatives would enjoy similar
properties as those we establish here using the Marchaud derivative.

We define  for $\eta\in (0,1)$
the \emph{fractional susceptibility function} of the perturbation $(f_t)$ and  
the observable $\phi \in L^\infty$, 
to be the formal power series
\begin{align}\label{1}
\Psi_\phi (\eta, z) 
&:= 
 \sum_{k=0}^\infty  z^k \int_I \frac{ \eta}{2 \Gamma(1-\eta)}
\int_{-\infty}^\infty (\phi \circ f_t^k) (x) \cdot 
\frac{\bigl ((\LL_{ t}-\LL_0 )  \rho_0\bigr )(x) }{|t|^{1+\eta}} \sign(t) \, \D t \,
\D x\, ,
\end{align}
and the \emph{frozen
fractional susceptibility function} of $(f_t)$ and $\phi$ to be the formal power series
\begin{align}\label{2}
  \Psi^\mathrm{fr}_\phi(\eta,z)
&:= \sum_{k=0}^\infty  z^k \int_I  
\frac{ \eta}{2 \Gamma(1-\eta)} (\phi \circ f_0^k) (x) 
\int_{-\infty}^\infty 
\frac{\bigl ((\LL_{ t} -\LL_0)  \rho_0\bigr )(x)}{|t|^{1+\eta}} \sign(t) \, \D t \,
\D x\, .
\end{align}
In both susceptibility functions, $\LL_t$ is the transfer operator 
$\LL_t \varphi(x)=\sum_{f_t(y)=x} \frac{\varphi(y)}{|f'_t(y)|}$.

\medskip
Our first main result (Theorem~\ref{thm:susc_holom}) 
says that if $\phi$ is bounded, then for any $\eta \in (0,1)$ the improper integrals in 
the two above formal power series are convergent, and that 
$\Psi_\phi(\eta,z)$ and  $\Psi_\phi^\mathrm{fr}(\eta,z)$ are holomorphic functions of $z$
in a disc $D_\eta$  of radius (which may depend on $f_0$) larger than one.  In addition, 
$$ \Psi^\mathrm{fr}_\phi(\eta,z)
=  \int_I  \phi \cdot (I-z\LL_0)^{-1}
M^\eta_t  \bigl (\LL_t \rho_0 (x) \bigr )|_{t=0}\, 
\D x= \sum_k z^k
 \int_I  (\phi \circ f_0^k)
M^\eta_t  \bigl (\LL_t \rho_0 (x) \bigr )|_{t=0}\, 
\D x\, ,
$$
and
$$\Psi_\phi(\eta,1)=M^\eta_t \RR_\phi(t)|_{t=0}\, .
$$

\begin{remark}Our lower bound
for the radius of $D_\eta$ tends to $1$ as $\eta \to 1$.  If the critical
point of $f_0$ is preperiodic,
then we expect $\Psi_\phi(\eta, z)$ to be holomorphic in
a disc of radius strictly larger than $1$ and meromorphic in the entire complex plane
for all $0\le \eta \le 1$. However, we believe that, generically,  the radius of convergence
of $\Psi_\phi(\eta, z)$ should tend to $1$
as $\eta \to 1$.
\end{remark}

As a corollary of our main theorem and the results
of \cite{BS}, we obtain (Corollary~\ref{lecor}) that,
if $v_0$ is horizontal  and $\phi$ is continuous, then 
$$\lim_{\eta \to 1}\Psi_\phi(\eta, 1)=\lim_{\eta \to 1}\Psi^\mathrm{fr}_\phi(\eta, 1)
=\Psi_\phi(1)=\partial_t \RR_\phi(t)|_{t=0}\, .
$$

Our second result, Theorem~\ref{newth}, is about more general fractional moduli of continuity:
We consider there weighted Marchaud derivatives,
replacing $|t|^{-1-\eta}$ by $(\log |t|)^{-\beta} |t|^{-1-\eta}$.
Applying Theorem~\ref{newth} to   $\beta >1$ and $\eta=1$ gives a modulus
of continuity $|t| |\log |t||^\beta$, almost reproducing the  $|t| |\log |t||$
estimates  from \cite{Ke, Ma, Baladi2007}.
Applying Theorem~\ref{newth} to   $\beta <0$, we show
(Corollary~\ref{cor:realanalytic})
that
$\Psi_\phi(\eta,z)$ and $\Psi^\mathrm{fr}_\phi(\eta,z)$
are holomorphic in the domain $\{\, (\eta, z)\in \complex^2\mid 0<\Re \eta<1\, ,
z \in D_\eta \,\}$.
\smallskip

The fractional susceptibility functions in \eqref{1} and \eqref{2} are  of ``fluctuation--dissipation'' type. 
It is tempting to consider the  \emph{response fractional susceptibility
function}\footnote{\label{samm}The equality \eqref{3} follows by integration by parts for the Marchaud derivative
\cite[(6.27)]{sam} for $\phi \in C^1$.}
\begin{align}
  \Psi^\mathrm{rsp}_\phi(\eta,z)
&:= -\sum_{k=0}^\infty  z^k \int_I  (\phi \circ f_0^k)\cdot
M^\eta_x  \bigl (X_0 \rho_0  \bigr )\,
\D x
\nonumber \\
&
\label{3}=
\sum_{k=0}^\infty  z^k \int_I  (M^\eta_x (\phi \circ f_0^k))
 \cdot X_0 \rho_0 \,
\D x ,
\end{align}
where the Marchaud derivative is now taken with respect to
$x$.  
The  arguments in the present
paper easily show that  for all $\eta\in (0,1)$ the function
$\Psi^\mathrm{rsp}_\phi(\eta,z)$ is holomorphic
in a disc of radius larger than $1$ for  all bounded $\phi$, and that
in the horizontal case we have $\lim_{\eta \to 1}\Psi^\mathrm{rsp}_\phi(\eta,1)=\partial_t \RR_\phi|_{t=0}$. 
The function $\Psi_\phi^\mathrm{rsp}(\eta,z)$  is at first sight the most seductive fractional
susceptibility function. However,
  $\Psi^\mathrm{rsp}_\phi(\eta,1)$
has no reason to coincide  in general with any fractional derivative
of order $\eta$ of $\RR_\phi(t)$  (except in the limit $\eta\to 1$,
even in the linear examples studied in \S\ref{lin}). In this respect,  $\Psi^\mathrm{rsp}_\phi(\eta,z)$
is not better than  the frozen susceptibility
function $\Psi^\mathrm{fr}_\phi(\eta,z)$.
Keeping also in mind that the goal of this fractional approach is to resolve the paradox described above for
the quadratic family,  we focus on the definitions \eqref{1} and \eqref{2} in the
present paper, referring to \cite{BS3} for more on the fractional response susceptibility function.

Observe also that fractional derivatives do not enjoy a  Leibniz formula
with finitely many terms for the 
derivative of a product, so we cannot expect fractional susceptibility functions
to be  as well-behaved as ordinary ones.

\medskip
We next make a few comments about our method of proof.
We shall consider transfer operators acting on  Sobolev spaces $\HH^{\tau,p}$
with $p>1$, close to $1$, and  $0<\tau <1/p$. These spaces give us more flexibility than
the $BV$ spaces classically used for piecewise expanding interval maps. We exploit
the bounds of   Thomine \cite{Thomine} for  the essential spectral radius
of $\LL_t$ on such spaces $\HH^{\tau,p}$.
In particular,  we have $\rho_0 \in \HH^{\tau,p}$
(see the beginning of \S\ref{standard}), so that, for each $\eta <1$,
the function  $M^\eta_x \rho_0$ belongs to a Sobolev space on which the transfer
operator $\LL_t$ associated to $f_t$ has a spectral gap (see \eqref{oldstuff}).
In order to apply the stable exponential decorrelation result of
Keller--Liverani \cite{KellerLiverani}, we need  Lasota--Yorke bounds
which are uniform in $t$.
Such bounds were known for the BV norm, but we have to carry out the corresponding estimates
for the Sobolev spaces.

The paper is organised as follows: Section~\ref{two2} starts with
definitions and formal statements. In \S\ref{frfr}, we state precisely  Theorem~\ref{thm:susc_holom} on fractional susceptibility
functions, followed by  its consequence (Corollary~\ref{lecor}) in the horizontal case.
In \S \ref{KellerlogS} we state Theorem~\ref{newth} about other moduli of continuity and Corollary~\ref{cor:realanalytic} about holomorphic extensions of the fractional
susceptibility functions.
In \S\ref{lin}, we discuss two linear examples.
Section~\ref{tools} introduces the key tools:
In \S\ref{standard}, we recall  properties of the transfer operator acting on
Sobolev spaces, from the work of Thomine \cite{Thomine}.
In \S\ref{stabm}, we discuss stability of mixing and mixing rates for good families, 
recalling in particular the results of Keller and Liverani \cite{KellerLiverani} that
we shall use, and stating the relevant technical lemmas (Lemma~\ref{lem:Lt-L0} and
Lemma~\ref{lem:lasota_yorke}). Subsection~\ref{theend} contains the
 proof of the perturbation Lemma~\ref{lem:Lt-L0} for Sobolev spaces which is needed to
 apply results of Keller and Liverani. In Section~\ref{pr},
we prove Theorems~\ref{thm:susc_holom} and \ref{newth}, as well
as Corollaries~\ref{lecor} and~\ref{cor:realanalytic}, using the techniques presented in Section~\ref{tools}.
Appendix~\ref{ll} contains the simple proof  that the limit
of the Marchaud derivatives as $\eta \to 1$ is the ordinary derivative. 
In Appendix~\ref{AA}, we show the uniform Lasota--Yorke estimates (Lemma~\ref{lem:lasota_yorke})
needed to apply the results of Keller and Liverani to Sobolev spaces.

Throughout, we shall use the notation $C$ for a finite positive constant which can vary
from place to place.


\section{Definitions and formal statement of results}
\label{two2}

\subsection{Fractional susceptibility functions for piecewise
expanding interval maps}
\label{frfr}

Let $I$ be the compact interval  $[-1,1]$ and let $r\in\{2,3\}$. We say that a continuous map $f \colon I \to I$ is
a piecewise $C^r$ unimodal map if there exists $-1 < c < 1$ such that
$f$ is increasing on $I_+ = [-1,c]$, decreasing on\footnote{The results of
 Thomine \cite{Thomine} hold for
piecewise $C^{1+\eta_0}$ maps, up to
replacing the condition $\tau <1/p$ there by $\tau < \max(\eta_0,1/p)$.
It would be interesting to adapt our results
to this setting, for $0<\eta < \eta_0$.}
  $I_{-} = [c,1]$, and   $f | _{I_{\sigma}}$ extends as a
  $C^r$ map denoted $f_\sigma$ to a neighbourhood $\tilde I_\sigma$ of $I_\sigma$. If, in addition, 
  $\lambda(f):=\inf_{\sigma=\pm } |f_\sigma'(x)| > 1$,
   we say that $f \colon I \to I$ is
a piecewise $C^r$ expanding unimodal map, and we define
\begin{equation}\label{ln}
\lambda_n(f):=\inf_{\vec \sigma\in \{\pm \}^n}\inf_x |(f^n_{\vec \sigma})'(x)|\, ,\,\,\,\,\, \,\,
\Lambda(f)= \biggl ( \lim_{n \to \infty} \lambda_n(f)^{-1/n}\biggr ) ^{-1} , 
\end{equation}
for all $n\ge 1$, where $f^n_{\vec \sigma}$ is the composition of the
maps $f_{\sigma_i}$, $i=1, \ldots, n$, and the points $x$ in
\eqref{ln} are restricted to those $x \in \tilde I_{\sigma_1} $ for
which the composition exists.  Following \cite{BS}, we say that a
piecewise $C^r$ expanding unimodal map $f$ is
\emph{good}\footnote{\label{goodness}Goodness will ensure the uniform
  Lasota--Yorke Lemma~\ref{lem:lasota_yorke} and thus stable
  mixing. It is not the weakest possible condition, see
  e.g.\ \cite[Example~5.6]{EG} but some assumption is needed
  \cite[Remark~15]{Ke}.  See also \cite[Proposition~2.1]{BS09}.}  if,
either $c$ is not periodic for $f$, or $|(f^{P_f})'(c)|>2$ where $P_f
\ge 2$ is the minimal period of $c$.  Finally, we say that a piecewise
$C^r$ expanding unimodal map $f$ is \emph{mixing} if $f$ is
topologically mixing on $[f^2(c), f(c)]$.

\begin{defn}[Perturbation $(f_t)$ of  a piecewise $C^r$ expanding unimodal map]\label{defL}
  Let $r\in\{ 2, 3\}$ and let $f$ be a piecewise $C^r$ expanding unimodal map. 
  Given $\ve > 0$ and  a family $(f_t)_{|t| \le \ve}$ of piecewise
  $C^r$  unimodal maps (for fixed $\tilde I_\pm$) is called a $C^{r}$ perturbation of $f_0=f$ if
 the $C^r$ norm of the extension $f_{t, \sigma}$ of $f_t$ to $\tilde I_\pm$ is uniformly bounded,
 with
$\|(f - f_t) |_{\tilde  I_{\sigma}} \|_{C^{r-1}} = O(|t|)\, 
\mbox { as } t \to 0$ for  $\sigma=\pm$. 
 
\noindent If all  $f_t$ are topologically
conjugated, or if $c$ is not periodic for $f$, set
$
\Lambda:=\inf_{|t|<\ve}\ \Lambda(f_t) 
$,
otherwise, if $f$ is good, with $c$ periodic of minimal
period $P_f\ge 2$, set
\begin{equation}\label{defLe}
\Lambda^{-1} :=\sup_{|t|<\ve}\, 
	 \lim_{k \to \infty} \biggl ( \frac{ \lambda_{k P_f}(f_t) }{2^k}
\biggr )^{-\frac{1}{kP_f}} \, ;
\end{equation}
in all these cases, up to taking a smaller $\ve$, we may and shall assume that  $\Lambda>1$. 
\end{defn}

Any piecewise $C^2$ expanding unimodal map $f_t$ admits a unique
absolutely continuous invariant probability measure $\mu_t=\rho_t\, \D
x$.  The measure $\mu_t$ is ergodic, and it is mixing if $f_t$ is
mixing. The density $\rho_t$ is the unique fixed point (see
e.g.\ \cite{Book}) of the transfer operator $\LL_t$ defined on $BV$ by
\[
\LL_t \varphi (x) = \sum_{f_t (y) = x} \frac{\varphi (y)}{|f_t'
  (y)|} \, .
\]

\smallskip

We now recall the definition of the Marchaud fractional derivatives in
 order to introduce fractional susceptibility functions. 
Let $ 0 < \eta < 1$, and set 
$\Gamma_\eta=\frac{\eta}{\Gamma (1-\eta)}$
where $\Gamma$ is  Euler's function.  Let $g \colon \bR \to \complex$ be a bounded globally $\bar\eta$-H\"older function  with $\bar\eta \in (0,1)$.
Recall 
\cite[p.~110, Theorem~5.9]{sam} that for any $\eta \in (0, \bar \eta)$,
the left-sided and right-sided Marchaud
derivatives of $g$ at $t_0\in \bR$ are 
\[
M_+^\eta g(t_0) = \Gamma_\eta\int_0^\infty
\frac{g(t_0) - g(t_0-t)}{t^{1+\eta}} \, \D t
\, , \, \, \,
M_-^\eta g(t_0) = \Gamma_\eta \int_0^\infty
\frac{g(t_0) - g(t_0+t)}{t^{1+\eta}} \, \D t \, .
\]
The two-sided Marchaud derivative is then defined by
\begin{align*}
  M^\eta g (t_0) &:= \frac{1}{2} \bigl (M_+^\eta g(t_0) - M_-^\eta g(t_0)
\bigr )
  \\
  &=\frac{\Gamma_\eta}{2} \biggl(
  \int_0^\infty \frac{g(t_0) - g(t_0-t)}{t^{1+\eta}} \, \D t -
  \int_0^\infty \frac{g (t_0) - g(t_0+t)}{t^{1+\eta}} \, \D t  \biggr) \\
   &=
  \frac{\Gamma_\eta}{2}
  \int_{-\infty}^\infty \frac{g (t_0+t) - g(t_0)}{|t|^{1+\eta}} \sign(t)
    \, \D t \, .
\end{align*}

The choice
of the normalisation $\Gamma_\eta$ ensures
the following key property:

\begin{lemma}\label{limit}
Assume that $g$ is bounded on $\mathbb{R}$ and differentiable at $t_0$. Then
  \[
  \lim_{\eta \uparrow 1} M_+^\eta g (t_0) = g'(t_0) \,\, \mbox{ and } \,\,
   \lim_{\eta \uparrow 1} M_-^\eta g (t_0) = - g'(t_0) \, , \,
\mbox{ so that } \,\, \lim_{\eta \uparrow 1} M^\eta(g)(t_0)=g'(t_0)\, .
  \] 
\end{lemma}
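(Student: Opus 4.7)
The plan is to prove the two one-sided statements directly, since the two-sided limit then follows by averaging and linearity. I focus on $M_+^\eta$; the argument for $M_-^\eta$ is verbatim with $g(t_0-t)$ replaced by $g(t_0+t)$, producing the opposite sign because $g(t_0)-g(t_0+t)=-g'(t_0)t+o(t)$ instead of $g'(t_0)t+o(t)$.

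First I would split the defining integral at $t=1$:
\[
M_+^\eta g(t_0)=\Gamma_\eta\!\int_0^1\!\frac{g(t_0)-g(t_0-t)}{t^{1+\eta}}\,\D t+\Gamma_\eta\!\int_1^\infty\!\frac{g(t_0)-g(t_0-t)}{t^{1+\eta}}\,\D t,
\]
and further decompose the $[0,1]$ piece by adding and subtracting $g'(t_0)t$ in the numerator, giving the three terms
\[
A_\eta=\Gamma_\eta g'(t_0)\!\int_0^1\! t^{-\eta}\,\D t,\quad B_\eta=\Gamma_\eta\!\int_0^1\!\frac{g(t_0)-g(t_0-t)-g'(t_0)t}{t^{1+\eta}}\,\D t,\quad C_\eta=\Gamma_\eta\!\int_1^\infty\!\frac{g(t_0)-g(t_0-t)}{t^{1+\eta}}\,\D t.
\]
Each piece is integrable: near $0$, differentiability at $t_0$ gives the numerator of $B_\eta$ as $o(t)$, so $t^{-1-\eta}o(t)$ is integrable; at infinity, boundedness of $g$ gives absolute integrability since $\eta>0$.

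For the main term $A_\eta$, an elementary computation gives
\[
A_\eta=\frac{\eta}{\Gamma(1-\eta)}\cdot\frac{1}{1-\eta}\,g'(t_0)=\frac{\eta\,g'(t_0)}{\Gamma(2-\eta)}\xrightarrow[\eta\uparrow 1]{}g'(t_0),
\]
using $\Gamma(1)=1$; this is precisely the point of the normalisation $\Gamma_\eta=\eta/\Gamma(1-\eta)$. For $C_\eta$, boundedness of $g$ yields
$|C_\eta|\le 2\|g\|_\infty\Gamma_\eta/\eta=2\|g\|_\infty/\Gamma(1-\eta)$, which tends to $0$ as $\eta\uparrow 1$ since $\Gamma(1-\eta)\to\infty$.

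The main (and only mildly delicate) step is bounding $B_\eta$. Given $\varepsilon>0$, differentiability at $t_0$ supplies $\delta\in(0,1)$ with $|g(t_0)-g(t_0-t)-g'(t_0)t|\le\varepsilon t$ for $0<t\le\delta$. On $[0,\delta]$ I estimate
\[
\Gamma_\eta\!\int_0^\delta\!\frac{\varepsilon t}{t^{1+\eta}}\,\D t=\frac{\varepsilon\eta\,\delta^{1-\eta}}{\Gamma(2-\eta)}\xrightarrow[\eta\uparrow 1]{}\varepsilon;
\]
on $[\delta,1]$ the integrand is controlled by the finite constant $2\|g\|_\infty+|g'(t_0)|$, so the corresponding contribution is bounded by $\Gamma_\eta\cdot\mathrm{const}\cdot\delta^{-\eta}/\eta$, which vanishes as $\eta\uparrow 1$ because $\Gamma_\eta\to 0$. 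Hence $\limsup_{\eta\uparrow 1}|B_\eta|\le\varepsilon$, and letting $\varepsilon\downarrow 0$ gives $B_\eta\to 0$. Combining the three limits yields $M_+^\eta g(t_0)\to g'(t_0)$, and the averaged definition $M^\eta=\tfrac{1}{2}(M_+^\eta-M_-^\eta)$ gives the final claim $\lim_{\eta\uparrow 1}M^\eta g(t_0)=g'(t_0)$.
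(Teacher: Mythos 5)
Your proof is correct and follows essentially the same route as the paper's: both arguments split the integral at a $\delta$ furnished by differentiability at $t_0$, use the exact computation $\Gamma_\eta\int_0^\delta t^{-\eta}\,\D t=\eta\delta^{1-\eta}/\Gamma(2-\eta)\to 1$ for the main contribution, and kill the tail via boundedness of $g$ and $1/\Gamma(1-\eta)\to 0$. Your "main term plus error" decomposition is only a cosmetic reformulation of the paper's sandwich between $g'(t_0)\pm\varepsilon$ followed by a $\limsup/\liminf$ argument.
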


(The above lemma is certainly well-known, we provide
a proof  in Appendix~\ref{ll}.)

\medskip

Given a $C^2$ perturbation $(f_t)_{|t| \le \ve}$ of a piecewise $C^2$
expanding unimodal map $f$, and given $\ve_1\le \ve$, we put\footnote{See also \eqref{truncc} for an
alternative approach.}
\begin{equation}
\label{???}
f_t^{(\ve_1)} = f_t \, , \forall
|t| \le \ve_1\, , \quad 
f_t^{(\ve_1)} = f_{\ve_1}\, , \forall t \ge \ve_1\, , \quad
f_t^{(\ve_1)} =
f_{-\ve_1}\, , \forall  t \le -\ve_1\, .
\end{equation}
  Then, for any function $\phi\in L^1(I)$, we define
the \emph{$\eta$-fractional susceptibility function} of
$(f_t^{(\ve_1)})$ for the observable $\phi$ to be the formal power
series
\begin{equation}\label{deffr0}
\Psi_\phi (\eta, z) =
 \sum_{k=0}^\infty \frac{\Gamma_\eta}{2} z^k \int_I \phi
\int_{-\infty}^\infty
\LL_t^k  \biggl( \frac{(\LL_{ t}-\LL_0 )
  \rho_0}{|t|^{1+\eta}} \biggr) \sign(t)  \, \D t \,
\D x\, ,
\end{equation}
and we define the \emph{frozen
$\eta$-fractional susceptibility function} of $(f_t^{(\ve_1)})$ and $\phi$ to be the formal  series 
\begin{equation}\label{deffr}
  \Psi^\mathrm{fr}_\phi(\eta,z)= 
  \sum_{k=0}^\infty  \frac{ \Gamma_\eta}{2} z^k\int_I \phi
     \int_{-\infty}^\infty 
     \LL_0^k  \biggl( \frac{(\LL_{ t}-\LL_0 )
       \rho_0}{|t|^{1+\eta}} \biggr)  \sign(t) \, \D t \,
     \D x \, .
\end{equation}
The coefficient of 
$z^k$ in each of the two formal power series above is a sum of improper integrals,
for $t\in (-\infty,0)$ and $t\in (0, \infty)$. We shall see in the proof of Theorem~\ref{thm:susc_holom} that each integral
converges, if  $\phi$ belongs to $L^q$ for large enough $q$. In particular, using Fubini,
we recover the formulas
\eqref{1} and \eqref{2} stated in the introduction.

\medskip

We are now ready to state our  main result. Recall $\Lambda>1$ from
Definition~\ref{defL}. For a function $g(x,t)$ of two real variables, we denote
by $M^{\eta} g   \big|_{t=t_0}$  the Marchaud derivative of $g$ in the $t$ variable,
at $t=t_0$.

\begin{theorem}[Fractional susceptibility function]\label{thm:susc_holom}
Let $(f_t)_{|t| \le \ve}$ be a $C^{2}$ perturbation  of a
 mixing piecewise $C^2$ expanding
unimodal map $f$.  
Assume that either $f$ is good or that all the $f_t$ are topologically
conjugated to $f$. Then there exist $\kappa <1$ (depending only
on $f_0$) and $\ve_1\in (0, \ve)$ 
 such that for any
 $0<\eta <1$, 
and for any $\phi \in L^q(I)$ with  $q >(1-\eta)^{-1}$, the following holds for 
the fractional susceptibility functions of $(f_t^{(\ve_1)})$ and $\phi$:

\begin{enumerate}[\itshape(a)]
\item\label{main} 
  $\Psi_\phi (\eta, z)$ 
  and  $\Psi^\mathrm{fr}_\phi (\eta, z)$  are  holomorphic in the open disc of
  radius $ \min( \Lambda^{1-\eta}, \kappa^{-1})$. 

\smallskip

\item\label{eq:response}
    $  \Psi_\phi (\eta, 1)=   M^{\eta} \bigl( \int_I \phi(x) \rho_t(x) \, \D x \bigr)    \big|_{t=0}$.

\smallskip 
\item\label{eq:last} $\Psi^\mathrm{fr}_\phi (\eta, z)=
\int_I \phi(x) \cdot \bigl ( ( I - z \LL_0)^{-1} M^\eta (\LL_t \rho_0)|_{t=0} \bigr ) (x)\,  \D x$ for all $|z|\le 1$.
\end{enumerate}
\end{theorem}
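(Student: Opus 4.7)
My plan is to turn each formal series into a composition of two operations on a Sobolev space $\HH^{\tau,p}$: a singular $t$-integral producing an element of $\HH^{\tau,p}$ of zero mean, followed by a resolvent $\sum_k z^k \LL_\bullet^k$. I would take $p>1$ close to $1$ and $0<\tau<1/p$, to be tuned later in terms of $\eta$. On such a space Thomine's results (\S\ref{standard}) give essential spectral radius $\Lambda^{-\tau}$ for $\LL_t$, and the uniform Lasota--Yorke Lemma~\ref{lem:lasota_yorke} combined with Keller--Liverani stability then produces a uniform spectral gap, i.e., a decomposition $\LL_t^k=\Pi_t+N_t^k$ where $\Pi_t$ is the rank-one projection onto $\rho_t$ and $\|N_t^k\|_{\HH^{\tau,p}}\le C\kappa^k$ with $\kappa<1$ depending only on $f_0$. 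The perturbation Lemma~\ref{lem:Lt-L0} is the other central input: it supplies a bound $\|(\LL_t-\LL_0)\rho_0\|_{\HH^{\tau,p}}=O(|t|^\alpha)$ for an exponent $\alpha$ which, by taking $\tau<1/p-\eta$ (possible for $p$ close to $1$), strictly exceeds $\eta$.

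\textbf{Integrability, Fubini, and part (a).} I would split the $t$-integrals in \eqref{deffr0} and \eqref{deffr} at $|t|=\ve_1$: near zero, $|t|^{-1-\eta}\cdot O(|t|^\alpha)$ is integrable since $\alpha>\eta$; for $|t|\ge\ve_1$ the truncation defining $f_t^{(\ve_1)}$ makes $(\LL_t-\LL_0)\rho_0$ constant in $t$, and $|t|^{-1-\eta}$ is integrable at infinity. The Sobolev embedding $\HH^{\tau,p}\hookrightarrow L^{q'}$ with $1/q+1/q'=1$ becomes available for $q>(1-\eta)^{-1}$ once $\tau$ is close to $1-\eta$ and $p$ close to $1$. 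Uniform-in-$t$ bounds on $\|\LL_t^k\|_{\HH^{\tau,p}}$ then justify Fubini to exchange $\sum_k$, $\int_I\,\D x$ and $\int_\bR\,\D t$. For the frozen series this produces
\[
\Psi^{\mathrm{fr}}_\phi(\eta,z)=\sum_{k\ge 0}z^k\int_I \phi\cdot\LL_0^k\bigl(M^\eta(\LL_t\rho_0)|_{t=0}\bigr)\,\D x,
\]
with $M^\eta(\LL_t\rho_0)|_{t=0}\in\HH^{\tau,p}$ of zero mean, since each $\LL_t\rho_0$ integrates to $1$. On the zero-mean subspace of $\HH^{\tau,p}$ one has $\|\LL_0^k\|\le C\max(\kappa^k,\Lambda^{-k\tau})$ (the first rate from Keller--Liverani, the second from Thomine); optimising $\tau$ just below $1-\eta$ gives the radius $\min(\kappa^{-1},\Lambda^{1-\eta})$. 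For the non-frozen $\Psi_\phi$ the same zero-mean property forces $\Pi_t(\LL_t-\LL_0)\rho_0=0$, whence $\LL_t^k(\LL_t-\LL_0)\rho_0=N_t^k(\LL_t-\LL_0)\rho_0$ with $\HH^{\tau,p}$-norm at most $C\kappa^k\min(|t|^\alpha,1)$, giving the same radius.

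\textbf{Identifications (b), (c), and main obstacle.} Part (c) is then the displayed formula above combined with the Neumann series $(I-z\LL_0)^{-1}=\sum_k z^k\LL_0^k$ on zero-mean functions, which converges throughout the disc found in (a); since $1$ lies in its interior, the case $|z|\le 1$ is included. For (b), the invariance $\LL_t\rho_t=\rho_t$ gives $(I-\LL_t)(\rho_t-\rho_0)=(\LL_t-\LL_0)\rho_0$, and the uniform spectral gap on the zero-mean subspace yields
\[
\rho_t-\rho_0=\sum_{k\ge 0}\LL_t^k(\LL_t-\LL_0)\rho_0\quad\text{in }\HH^{\tau,p},
\]
with convergence uniform in $|t|\le\ve_1$. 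Pairing with $\phi$, dividing by $|t|^{1+\eta}$, integrating against $\sign(t)\,\D t$ and invoking Fubini reproduces $\Psi_\phi(\eta,1)=M^\eta\bigl(\int\phi\rho_t\,\D x\bigr)|_{t=0}$. The main obstacle is the parameter balancing: one needs $\tau<1/p-\eta$ so that the perturbation estimate (essentially governed by translated jumps of $\rho_0$) yields $\alpha>\eta$, while at the same time $\tau$ must be pushed close to $1-\eta$ to realise the radius $\Lambda^{1-\eta}$. Establishing Lemma~\ref{lem:Lt-L0} with this sharp pair of indices, uniformly in $t$, is the technically demanding step, and it necessarily degrades as $\eta\uparrow 1$, in agreement with the remark that the radius collapses to $1$ in that limit.
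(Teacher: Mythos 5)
Your proposal is correct and follows essentially the same route as the paper's proof: H\"older plus the Sobolev embedding to pair with $\phi\in L^q$, the Keller--Liverani uniform decay \eqref{eq:decayestimate_t} on the zero-mean subspace combined with the two-index perturbation bound of Lemma~\ref{lem:Lt-L0} to get joint $k$-summability and $t$-integrability, the truncation $f_t^{(\ve_1)}$ for $|t|\ge\ve_1$, Fubini, the resolvent identity $(\LL_t-\LL_0)\rho_0=(I-\LL_t)(\rho_t-\rho_0)$ for (b), and the Neumann series for (c). The only cosmetic discrepancy is that you occasionally write the perturbation estimate and Thomine's bound with a single Sobolev index $\tau$ (e.g.\ essential radius $\Lambda^{-\tau}$ rather than $\Lambda^{-\tau-1+\frac1p}$), but since you take $p$ close to $1$ and explicitly invoke the two-index Lemma~\ref{lem:Lt-L0}, this matches the paper's choice $\tilde\tau=\frac1p-1+\frac1q$ in the limit and does not affect the argument.
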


As explained in \S\ref{1.2}, formula (\ref{eq:response}) in the above theorem
can be viewed as the key result of this work.
The proof of Theorem~\ref{thm:susc_holom} will be given in Section~\ref{pr},
after we introduce some necessary tools in Section~\ref{tools}.
We next make a few remarks about the statement:

Clearly,  if $\phi \in L^\infty$, then  we can consider all
values of $\eta \in (0,1)$.

The proof of Theorem~\ref{thm:susc_holom} actually shows that for
$\phi \in L^q (I)$, the response function is H\"{o}lder
continuous with exponent $\eta$ provided that $q > (1 - \eta)^{-1}$.

We do not claim that the holomorphy radius given in claim a) is optimal. 
However, we expect that, generically, the maximal holomorphic extension
radius of   $\Psi_\phi (\eta, z)$ tends to one as $\eta \to 1$.
It is unclear whether the frozen fractional susceptibility function
is holomorphic in a disc of radius larger than one, uniformly in $\eta \to 1$.

Claim c) implies that, for any $\eta \in (0,1)$, and all $|z|\le 1$
\begin{align} \label{nicefrozen}
\Psi^\mathrm{fr}_\phi (\eta, z)&= \sum_{k=0}^\infty z^k
\int_I \phi(f_0^k(x)) \cdot  \bigl ( M^\eta (\LL_t \rho_0)|_{t=0} \bigr ) (x) \, \D x \, .
\end{align}
For $z=1$ this is reminiscent of the fluctuation--dissipation formula
for linear response (see e.g.\ \cite{BKL}). Note, however, that we
cannot integrate by parts (in spite of \cite[(6.27)]{sam}) because the
Marchaud derivative is with respect to the parameter $t$. See
Corollary~\ref{lecor} for more information on the frozen
susceptibility function in the ``horizontal'' case.

\medskip
To state an interesting corollary of our
Theorem~\ref{thm:susc_holom}, letting $H_u$ denote the Heaviside
jump at $u\in \real$, i.e., $H_u(x)=-1$ if $x< u$, while
$H_u(x)=0$ if $x >u$, and $H_u(u)=-1/2$, and setting
$c_k=f^k(c)$, we recall that \cite[Proposition~3.3]{Baladi2007}
we may decompose $\rho_0$ as $ \rho_0 = \rho_0^\mathrm{reg} +
\rho_0^\mathrm{sal} $, where the regular part
$\rho_0^\mathrm{reg}$ is differentiable and supported in $I$,
with derivative in $BV$, and the saltus (or singular) part
$\rho_0^\mathrm{sal} = \sum_{k=1}^{N_f} s_k H_{c_k}$ where
$N_f=\# \{\, c_k \mid k \ge 1 \,\}\in [2, \infty]$.  In addition,
if the critical point $c$ is not periodic, we have
\begin{equation}\label{rsal}
\rho_0^\mathrm{sal} = \sum_{k=1}^{\infty} \bar s_k H_{c_k}\, ,
\quad \mbox{ where } \,  \bar s_1=-\lim_{x \uparrow c_1} \rho_0(x)<0\, ,
\, \, \bar s_k=\frac{s_1 }{(f^{k-1})'(c_1)}\, .
\end{equation}
Note that if $N_f$ is finite but
$c$ is  not periodic (it is then preperiodic)
then the jump $s_j$ at $c_j$ is given by the sum of
all $\bar s_k$ for $k$ such  that $c_k=c_j$.

 We say that  a bounded function $v$  is \emph{horizontal} for $f$ if, setting $P_f=N_f$ if $c$ is periodic with minimal period $P_f\ge 2$,
and $P_f=\infty$ otherwise, we have
\begin{equation}\label{horr}
\sum_{k=0}^{P_f-1} \frac{ v (c_k)}{(f^{k})'(c_1)}
=0 \, .
\end{equation}
If $v$ is not horizontal, we say that $v$ is \emph{transversal.}
In the horizontal case, we have (Corollary~\ref{lecor} is proved in Section~\ref{proofcor}):

\begin{corollary}[Fractional susceptibility of horizontal perturbations]
  \label{lecor}
  Let $(f_t)_{|t| \le \ve}$ be a $C^{3}$ perturbation of a mixing
  piecewise $C^3$ expanding unimodal map $f$.  Assume that either
  $f$ is good or that all the $f_t$ are topologically conjugated
  to $f$.  Assume in addition\footnote{This implies
    $v_0(-1)=v_0(1)=0$ and if $f(c)=1$ then $v_0(c)=0$.} that
  $f_t(-1)=f_t(1)=-1$ for all $t$, that $ v_0=\partial_t
  f_t|_{t=0} = X_0 \circ f_0 $ is horizontal for $f_0$, where
  $X_0$ is $C^2$ on $f(I)$, and, finally that $(x,t)\mapsto
  f_t(x)$ extends as a $C^{2}$ map to $(\tilde I_+ \cup \tilde
  I_-) \times [-\ve, \ve]$.  Then, there exists $\ve_2\in (0,
  \ve)$ such that for any $\phi \in C^0$, the following holds for
  the fractional susceptibility function of $(f_t^{(\ve_2)})$ and
  $\phi$:
  \begin{align}
    \nonumber \lim_{\eta\uparrow 1} \Psi_\phi (\eta, 1)
    &=\partial_t \biggl ( \int \phi(x) \rho_t(x) \, \D x\, \biggr
    ) \biggr |_{t=0} \\
    \label{reTCE} &=-\sum_{j=1}^{P_f} \phi(c_j)  \sum_{k=1}^j   \bar s_k X_0(c_k)
    - \int \phi \cdot (I- \LL_0)^{-1} (X_0'
    \rho_0^\mathrm{sal}+(X_0 \rho_0^\mathrm{reg})') \, \D x \, .
  \end{align}
  Assume furthermore that $c$ is not periodic for $f$.  Then, we
  have
  \begin{equation}\label{noper}
    \lim_{\eta \uparrow 1}\bigl ( (\Psi_\phi (\eta, 1) -
    \Psi^\mathrm{fr}_\phi (\eta, 1) \bigr )=0 \, .
  \end{equation}
\end{corollary}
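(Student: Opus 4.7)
The proof splits naturally into two parts, both of which reduce to Lemma~\ref{limit} and the horizontal linear response theorem of Baladi--Smania \cite{BS}.

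For the chain of equalities in \eqref{reTCE}, I would start from Theorem~\ref{thm:susc_holom}(\ref{eq:response}), which identifies $\Psi_\phi(\eta,1)$ with the Marchaud derivative $M^\eta\RR_\phi(t)\big|_{t=0}$. The truncation convention \eqref{???} guarantees that $\RR_\phi$ is bounded on all of $\bR$ (with $|\RR_\phi|\le\|\phi\|_{C^0}$ and constant outside $[-\ve_2,\ve_2]$). Under the $C^3$ and horizontality hypotheses of the corollary, the main theorem of \cite{BS} provides both the differentiability of $\RR_\phi$ at $t=0$ and the explicit expression appearing as the right-hand side of \eqref{reTCE} (the finite sum over $j$ comes from the saltus part $\rho_0^\mathrm{sal}$ via the cascade formula \eqref{rsal}, while the integral term encodes the contribution of the regular part, and \eqref{horr} is exactly the cancellation that renders the formula finite). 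Lemma~\ref{limit} applied to $g=\RR_\phi$ at $t_0=0$ then yields $\lim_{\eta\uparrow 1}\Psi_\phi(\eta,1)=\partial_t\RR_\phi\big|_{t=0}$, completing \eqref{reTCE}.

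For \eqref{noper}, the plan is to exploit Theorem~\ref{thm:susc_holom}(\ref{eq:last}) at $z=1$, namely
\[
\Psi^\mathrm{fr}_\phi(\eta,1)=\int_I\phi\cdot (I-\LL_0)^{-1}M^\eta(\LL_t\rho_0)\big|_{t=0}\,\D x\, .
\]
When $c$ is not periodic and $v_0$ is horizontal, the \cite{BS} formula can be recast via the resolvent identity \eqref{key} as
\[
\partial_t\RR_\phi\big|_{t=0}=\int_I\phi\cdot (I-\LL_0)^{-1}\partial_t(\LL_t\rho_0)\big|_{t=0}\,\D x\, ,
\]
the point being that non-periodicity of $c$ combined with horizontality forces $\partial_t(\LL_t\rho_0)\big|_{t=0}$ to land in the zero-mean subspace of $\HH^{\tau,p}$ on which $(I-\LL_0)^{-1}$ is bounded by the spectral gap from Thomine's theorem; at a periodic critical point this would fail because of an unavoidable resonance. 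Combining this identity with the first part of the corollary, \eqref{noper} reduces to the Banach-valued limit
\[
\lim_{\eta\uparrow 1}\bigl\|M^\eta(\LL_t\rho_0)\big|_{t=0}-\partial_t(\LL_t\rho_0)\big|_{t=0}\bigr\|_{\HH^{\tau,p}}=0\, ,
\]
after which the continuity of $(I-\LL_0)^{-1}$ and of pairing with $\phi\in C^0$ transfers the convergence to the scalar level.

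The main obstacle is this Banach-valued version of Lemma~\ref{limit} applied to $g(t)=\LL_t\rho_0$. I would handle it by the same splitting-at-$\delta$ argument as in Appendix~\ref{ll}: control the part of the Marchaud integral over $|t|<\delta$ using the differentiability of $t\mapsto\LL_t\rho_0$ at $t=0$ in the $\HH^{\tau,p}$ norm (which is where non-periodicity$+$horizontality really bite, via the perturbation Lemma~\ref{lem:Lt-L0} together with \cite{BS}), and control the part over $|t|\ge\delta$ by the uniform Lasota--Yorke bounds of Lemma~\ref{lem:lasota_yorke} and the truncation \eqref{???}, which kills the integrand for $|t|\ge\ve_2$ altogether. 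Letting first $\eta\uparrow 1$ and then $\delta\downarrow 0$ yields the limit; establishing the $\HH^{\tau,p}$-differentiability of $\LL_t\rho_0$ at $0$ is the genuinely delicate ingredient, since it is precisely the failure of this differentiability in the non-horizontal or periodic-$c$ regimes that drives the fractional response phenomena motivating the paper.
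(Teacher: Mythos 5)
Your treatment of \eqref{reTCE} matches the paper's: Theorem~\ref{thm:susc_holom}\,(b) plus Lemma~\ref{limit} applied to the scalar function $\RR_\phi$, with \cite{BS} supplying the differentiability at $t=0$ and the explicit formula. (The paper additionally replaces $f_t$ by the conjugated family $\tilde f_t$ with $|\tilde f_t-f_t|=O(t^2)$ before invoking \cite{BS}, a technical step you omit, but this is minor.)

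For \eqref{noper}, however, there is a genuine gap. Your plan hinges on a Banach-valued version of Lemma~\ref{limit} for $g(t)=\LL_t\rho_0$, i.e.\ on the differentiability of $t\mapsto \LL_t\rho_0$ at $t=0$ in the $\HH^{\tau,p}$ norm. This is false, not merely delicate: the candidate derivative is
$-X_0'\rho_0-X_0(\rho_0^{\mathrm{reg}})'-\sum_{k}\bar s_k X_0(c_k)\delta_{c_k}$,
and the Dirac masses $\delta_{c_k}$ (arising because the jump locations $c_{k,t}=h_t(c_k)$ of $\LL_t\rho_0^{\mathrm{sal}}$ move with $t$ at speed $\alpha(c_k)$, cf.\ \eqref{TCEs}) do not belong to $\HH^{\tau,p}$ for any $\tau\ge 0$ (one would need $\tau<\tfrac1p-1<0$). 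Consistently, Lemma~\ref{lem:Lt-L0} only gives H\"older continuity of exponent $\tau-\tilde\tau<\tfrac1p<1$ in these norms, never Lipschitz continuity. Your heuristic that ``non-periodicity plus horizontality forces the derivative into the zero-mean subspace of $\HH^{\tau,p}$'' therefore cannot be made to work; the obstruction is membership in the space, not the mean (which vanishes automatically by \eqref{b0}). The paper's actual route is to prove only \emph{weak} differentiability: for each $\phi\in C^0$ the scalar map $t\mapsto\int_I\phi\,(\LL_t\rho_0)\,\D x$ is differentiable at $0$ with derivative $-\int\phi\,(X_0\rho_0)'$ (formula \eqref{esta}), so that \eqref{desi} holds in the sense of distributions. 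Establishing \eqref{esta} is the real content: it uses the decomposition \eqref{above} of $\LL_t\rho_0-\rho_0$ via the straightening operator $G_t$ (which maps $H_{c_{k,t}}$ to $H_{c_k}$), Step~2 of the proof of \cite[Theorem~5.1]{BS} for the $G_t$-conjugated piece, an explicit computation of the contribution of the moving jumps, and a cancellation coming from the twisted cohomological equation \eqref{TCE}. Non-periodicity of $c$ enters to ensure $P_f=\infty$, $\LL_0\delta_{c_k}=\delta_{c_{k+1}}$, and the rearrangement of the double sum matching the first term of \eqref{reTCE} --- not through a spectral ``resonance.'' None of this machinery appears in your proposal, so the second half of the corollary is not proved.
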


\begin{remark}
  The first term of \eqref{reTCE} can be rewritten as
  \[
  -\sum_{j=1}^{P_f} \phi(c_j) \sum_{k=1}^j \bar s_k X_0(c_k)=
  -\int \phi \alpha (\rho_0^\mathrm{sal})'\, , \,\, \mbox{ where }
  \alpha(x):= -\sum_{j=0}^{P_f-1} \frac{v(f^{j}(x))}{(f^{j+1})'
    (x)} \, .
  \]
  Indeed, $\alpha$ is the solution (which \cite[Lemma~2.2,
    Remark~2.3, Proposition~2.4]{BS} is unique and continuous
  under the assumptions of the corollary) of the twisted
  cohomological equation
  \begin{equation}\label{TCE}
    X_0(f(x))=v_0(x)=\alpha(f(x))-f' (x) \alpha(x) \, , \, x \ne
    c \, , \, \, \, X_0(c_1)=v(c)= \alpha(c_1) \, ,
  \end{equation} 
  so that, for $j \le P_f$,
  \begin{equation}\label{magic}
    \sum_{k=1}^{j} \bar s_k X_0(c_k) =\bar s_1 \sum_{k=1}^{j}
    \frac{ X_0(c_k)}{(f^{k-1})' (c_1)} = \bar s_1\biggl (
    X_0(c_1) - \alpha(c_1) + \frac{\alpha(c_j)}{(f^{j-1})' (c_1)}
    \biggr )= \bar s_j \alpha(c_j)\, .
  \end{equation}
\end{remark}

\subsection{Fractional moduli of continuity and Keller's $x \log x$ bound}
\label{KellerlogS}

In the definition of the Marchaud derivatives $M_+^\eta g$ and
$M_-^\eta g$ of a function $g$, we may replace $t^{1+\eta}$ with other
weights. Suppose for instance that $\ell \colon [0,\infty) \to
  \mathbb{C}$ is a continuous function and that $\gamma \in [0,1)$ is a
  constant such that
\begin{equation}\label{hh}
  \ell(0)=0, \qquad \int_0^1 \frac{t^\gamma}{|\ell (t)|} \, \D
  t < \infty \qquad \mathrm{and} \qquad \int_1^\infty
  \frac{1}{|\ell(t)|} \, \D t < \infty \, .
\end{equation}
We then define the right and left-sided $\ell$-Marchaud derivatives of
a bounded $\gamma$-H\"older function $g$ by
\[
M_+^{(\ell)} g (t_0) = \int_0^\infty \frac{g(t_0) - g(t_0-t)}{\ell(t)}
\, \D t\, , \qquad M_-^{(\ell)} g (t_0) = \int_0^\infty \frac{g(t_0) -
  g(t_0+t)}{\ell(t)} \, \D t \, .
\]
We define the two-sided $\ell$-Marchaud derivative by $M^{(\ell)} g =
\frac{1}{2} (M_+^{(\ell)} g - M_-^{(\ell)} g)$.
Finally, we define the $\ell$-susceptibility function to be the formal
power series
\[
\Psi_\phi ((\ell),z) = \sum_{k=0}^\infty z^k \int_I \phi
\int_{-\infty}^\infty\biggl( \LL_t^k \frac{(\LL_t - \LL_0)
  \rho_0}{\ell(|t|)} \biggr) \sign(t)  \, \D t\, \D x\, .
\]
Similarly, we define the frozen $\ell$-susceptibility function by the
formal power series
\[
\Psi_\phi^\mathrm{fr} ((\ell),z) = \sum_{k=0}^\infty z^k \int_I \phi
\int_{-\infty}^\infty\biggl( \LL_0^k \frac{(\LL_t - \LL_0)
  \rho_0}{\ell(|t|)} \biggr) \sign(t)  \, \D t\, \D x\, .
\]

The following theorem is proved in almost the same way as
Theorem~\ref{thm:susc_holom} (see Section~\ref{sec:generalizedproof}).

\begin{theorem}[Generalized fractional susceptibility function]\label{newth}
  Let $(f_t)_{|t| \le \ve}$ be a $C^{2}$ perturbation of a mixing
  piecewise $C^2$ expanding unimodal map $f$. Assume that either $f$
  is good or all the $f_t$ are topologically conjugated to $f$.  Let
  $\ell$ and $\gamma \geq 0$ be such that \eqref{hh} holds.  Then, for
  any $q\ge (1-\gamma)^{-1}$ and any $\phi \in L^q$, the following holds:
  \begin{enumerate}[\itshape(a)]
  \item \label{maingeneralized} the susceptibility functions
    $\Psi_\phi ((\ell), z)$ and $\Psi_\phi^\mathrm{fr} ((\ell), z)$
    are well-defined and holomorphic in a disc of radius strictly
    larger than one.
  \item \label{responsegeneralized}  $\Psi_\phi ((\ell), 1) = M^{(\ell)} \bigl( \int_I \phi(x)
    \rho_t(x) \, \D x \bigr) \big|_{t=0}$.
  \item \label{lastgeneralized} $\Psi^\mathrm{fr}_\phi ((\ell), z)=
    \int_I \phi(x) \cdot \bigl ( ( I - z \LL_0)^{-1} M^{(\ell)} (\LL_t
    \rho_0)|_{t=0} \bigr ) (x) \, \D x$ for all $|z|\le 1$.
  \end{enumerate}
\end{theorem}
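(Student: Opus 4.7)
The plan is to closely follow the proof of Theorem~\ref{thm:susc_holom}, systematically replacing the Marchaud kernel $\Gamma_\eta |t|^{-1-\eta}$ by the general weight $1/\ell(|t|)$. The two integrability conditions in \eqref{hh} play exactly the roles previously taken by the integrability of $|t|^{-1-\eta}$ near $0$ (controlled through the H\"older exponent $\eta$, here replaced by $\gamma$) and at infinity (absorbed by the truncation at $|t|=\ve_1$ inherited from the construction preceding \eqref{deffr0}). The Sobolev-space machinery of Section~\ref{tools}---Thomine's bound on the essential spectrum, the perturbation Lemma~\ref{lem:Lt-L0}, and the uniform Lasota--Yorke Lemma~\ref{lem:lasota_yorke}---transfers without change.

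For claim (a), first fix $p>1$ close to $1$ and $\tau \in (0, 1/p)$ close to $\gamma$, chosen so that the Sobolev embedding $\HH^{\tau, p} \hookrightarrow L^{q/(q-1)}$ is available for the dual exponent of $q \ge (1-\gamma)^{-1}$. The perturbation Lemma~\ref{lem:Lt-L0} then furnishes a bound $\|(\LL_t - \LL_0)\rho_0\|_\ast \le C |t|^{\tau}$ in the relevant norm $\|\cdot\|_\ast$, uniformly for $|t|\le \ve_1$, while Thomine combined with the uniform Lasota--Yorke estimate and Keller--Liverani stability produces $\|\LL_t^k v\|_{\HH^{\tau,p}} \le C \kappa^k \|v\|_{\HH^{\tau,p}}$ on the mean-zero subspace, for some $\kappa < 1$ independent of $|t|\le \ve_1$. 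Together with the first condition in \eqref{hh}, these two inputs deliver, via Fubini, absolute convergence of the improper integrals in the defining formulas of $\Psi_\phi((\ell),z)$ and $\Psi_\phi^{\mathrm{fr}}((\ell),z)$; the geometric factor $\kappa^k$ then produces holomorphy of both susceptibility functions in a disc of radius $\kappa^{-1}>1$.

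For claim (b), at $z=1$ the telescoping $\sum_{k\ge 0} \LL_t^k (\LL_t - \LL_0)\rho_0 = \rho_t - \rho_0$ (legitimate in $\HH^{\tau,p}$ by the uniform spectral gap) combined with Fubini turns the definition into
\[
\Psi_\phi((\ell),1) = \int_{-\infty}^{\infty} \frac{\sign(t)}{\ell(|t|)} \bigl( \RR_\phi(t) - \RR_\phi(0) \bigr)\, \D t = M^{(\ell)} \RR_\phi(t) \big|_{t=0}.
\]
The same Fubini exchange on the frozen series delivers claim (c): the inner $t$-integral equals $M^{(\ell)}(\LL_t \rho_0)|_{t=0}$, an element of the mean-zero subspace of $\HH^{\tau,p}$, and $(I-z\LL_0)^{-1}$ is bounded there for $|z|\le 1$ by the spectral gap. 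The main technical obstacle throughout is the justification of these Fubini exchanges in the presence of the general weight $\ell$: the near-zero H\"older estimate $|t|^{\tau}$ for $(\LL_t-\LL_0)\rho_0$ must balance against $\int_0^1 t^{\gamma}/|\ell(t)|\, \D t < \infty$, which is precisely why the hypothesis $q \ge (1-\gamma)^{-1}$ (equivalently, enough slack in the Sobolev embedding $\HH^{\tau,p}\hookrightarrow L^{q/(q-1)}$ to absorb the loss in $\gamma$) is imposed.
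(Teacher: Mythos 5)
Your proposal matches the paper's proof: Theorem~\ref{newth} is proved there by rerunning the proof of Theorem~\ref{thm:susc_holom} verbatim with $|t|^{1+\eta}$ replaced by $\ell(|t|)$, using the same Keller--Liverani decay, the perturbation Lemma~\ref{lem:Lt-L0}, and the Sobolev embedding, with the two conditions in \eqref{hh} controlling the integral near $t=0$ and the tail $|t|>\ve_1$ (where $\LL_t=\LL_{\pm\ve_1}$) exactly as you describe, and with claims (b) and (c) obtained by the same telescoping and Fubini arguments. The only cosmetic slip is that the H\"older gain from Lemma~\ref{lem:Lt-L0} is $|t|^{\tau-\tilde\tau}$ with $\tilde\tau=\frac1p-1+\frac1q$ forced by the embedding $\HH^{\tilde\tau,p}\hookrightarrow L^{q/(q-1)}$ and $\tau$ taken close to $\frac1p$, so the exponent that must dominate $\gamma$ is $\tau-\tilde\tau\approx 1-\frac1q$ rather than $\tau$ itself --- which is precisely the balancing against $\int_0^1 t^\gamma/|\ell(t)|\,\D t<\infty$ that you state in your final paragraph.
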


For instance, (cf.\ the power-logarithmic kernel operators discussed
in \cite[\S 21]{sam}) we may fix $0 < \eta < 1$ and $\beta \geq 0$ and
set
\[
\ell(t)=\ell_{\eta, \beta} (t) = t^{1+\eta} | \min \{-1, \log t
\}|^\beta\, .
\]
Then, if $\phi \in L^q$, $\eta = 1 - \frac{1}{q}$ and $\beta > 1$,
Theorem~\ref{newth} for $\gamma < \eta$ implies
\[
M^{(\ell_{\eta, \beta})} \biggl( \int_I \phi(x) \rho_t(x) \, \D x
\biggr) \bigg|_{t=0} = \Psi_\phi ((\ell_{\eta,\beta}), 1)\, .
\]
In particular, if $q=\infty$, we may take $\eta=1$ and $\beta >1$
arbitrarily close to $1$ which is reminiscent of Keller's \cite{Ke}
$t|\log |t||$ modulus of continuity.

Another consequence of Theorem~\ref{newth} is that we may consider the
Marchaud derivative $M^\eta \bigl( \int_I \phi (x) \rho_t (x) \,
\D t \bigr) \big|_{t=0}$ and the susceptibility
functions $\Psi_\phi (\eta, z)$ and   $\Psi_\phi^\mathrm{fr}(\eta, z)$  for non-real values of
$\eta$.  A corollary
of Theorem~\ref{newth} is the following result, which we prove in
Section~\ref{sec:realanalyticproof}.

\begin{corollary} [Holomorphic extension in $(\eta,z)$]\label{cor:realanalytic}
 Let $(f_t)_{|t| \le \ve}$ be a $C^{2}$ perturbation of a mixing
  piecewise $C^2$ expanding unimodal map $f$. Assume that either $f$
  is good or all the $f_t$ are topologically conjugated to $f$, and let $\kappa<1$
  and $\ve_1$ be from Theorem~\ref{thm:susc_holom}.  
  Let $\phi \in L^q$, for some $q > 1$. Then the function
  $\eta \mapsto M^\eta \bigl( \int_I \phi (x) \rho_t (x) \, \D
  t \bigr) \big|_{t=0}$ is holomorphic in the strip $0 < \Re
  \eta < 1 - \frac{1}{q}$, and the functions $(\eta, \zeta) \mapsto
  \Psi_\phi (\eta, z)$ and $(\eta, \zeta) \mapsto
  \Psi_\phi^\mathrm{fr} (\eta, z)$ are  holomorphic in the domain $\{\, (\eta, z) \mid
  0 < \Re \eta < 1-\frac 1 q ,\ |z| < \min (\kappa,
  \Lambda^{-\frac{1}{q}}) \,\}$.
\end{corollary}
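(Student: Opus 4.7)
The plan is to extend the weight $\ell_\eta(s) := s^{1+\eta}$ (principal branch, so $|\ell_\eta(s)| = s^{1+\Re\eta}$ for $s > 0$) to complex $\eta$ in the strip $S := \{ \eta \in \complex : 0 < \Re \eta < 1 - 1/q \}$, and to apply Theorem~\ref{newth} with this weight for each fixed $\eta \in S$. For any compact $K \subset S$, choose $\gamma$ with $\sup_{\eta \in K} \Re \eta < \gamma < 1 - 1/q$: then $\ell_\eta$ satisfies conditions \eqref{hh} with this $\gamma$ uniformly in $\eta \in K$, and $q \geq (1 - \gamma)^{-1}$ by construction, so Theorem~\ref{newth} applies and yields, for every $\eta \in K$, holomorphy in $z$ of $\Psi_\phi((\ell_\eta), z)$ and $\Psi^\mathrm{fr}_\phi((\ell_\eta), z)$ on a disc of radius $>1$. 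Since $\Psi_\phi(\eta, z) = \tfrac{\Gamma_\eta}{2}\Psi_\phi((\ell_\eta), z)$ (and likewise for the frozen function) with $\eta \mapsto \Gamma_\eta / 2 = \eta / (2\Gamma(1-\eta))$ entire, it suffices to treat the $\ell_\eta$-susceptibility functions.

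For holomorphy in $\eta$, observe that the $k$-th Taylor coefficient of $\Psi_\phi((\ell_\eta), z)$ in the variable $z$,
\[
a_k(\eta) := \int_I \phi(x) \int_{-\infty}^\infty \LL_t^k \! \biggl( \frac{(\LL_t - \LL_0)\rho_0}{|t|^{1+\eta}} \biggr)(x) \, \sgn(t) \, \D t \, \D x,
\]
and the analogous coefficient of the frozen function have integrands that are holomorphic in $\eta$ pointwise in $(x,t)$. The estimates underlying Theorem~\ref{newth}\eqref{maingeneralized} depend on $\eta$ only through $\Re \eta$ and $\gamma$, and hence furnish integrable majorants uniform for $\eta \in K$. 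Standard applications of Morera's theorem, combined with Fubini and dominated convergence, show that each $a_k$ is holomorphic on the interior of $K$, hence on $S$. The same reasoning applied to the single integral defining $M^\eta g(0)$, with $g(t) := \int \phi \, \rho_t\, \D x$ (bounded and H\"older of exponent $\gamma$ in $t$ by the $L^p$-continuity estimates of Section~\ref{tools}), yields holomorphy of $\eta \mapsto M^\eta g(0)$ on $S$.

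The very same uniform bounds give $|a_k(\eta)| \le C(K) R(K)^{-k}$ for some $R(K) > 1$ uniform in $\eta \in K$ (essentially $R(K) = \min(\kappa^{-1}, \Lambda^{1 - \sup_K \Re \eta})$), and since $R(K) > \min(\kappa, \Lambda^{-1/q})$, the series $\sum_k z^k a_k(\eta)$ converges uniformly on compact subsets of $K \times \{ |z| < \min(\kappa, \Lambda^{-1/q}) \}$. Joint holomorphy in $(\eta, z)$ on the stated domain then follows from the Weierstrass double-series theorem, or equivalently from Hartogs' theorem on separately holomorphic functions. The main obstacle is to confirm that the constants from the underlying Keller--Liverani perturbation bound (Lemma~\ref{lem:Lt-L0}) and the Thomine essential-spectral-radius estimate can be chosen uniformly in $\eta$ on any compact $K \subset S$; this uniformity is essentially automatic because $\LL_t$ itself is independent of $\eta$, and the $\eta$-dependence of the integrands is confined to the modulus $|t|^{-(1+\Re \eta)}$ (together with the entire prefactor $\Gamma_\eta/2$).
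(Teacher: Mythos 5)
Your argument is correct, but it follows a genuinely different route from the one the paper actually carries out: you implement the Morera-theorem strategy that the paper only sketches in a preliminary remark, whereas the paper's detailed proof fixes a base point $\eta_0$, expands $|t|^{-\zeta}=\sum_n \frac{(\log|t|)^n}{n!}\zeta^n$ for $\zeta=\eta-\eta_0$, and reduces joint holomorphy to an explicit exponential bound on the coefficients $\Psi_\phi((\ell_n),1)/n!$ for the logarithmic weights $\ell_n(t)=|t|^{1+\eta_0}(\log|t|)^{-n}$, computed via Gamma-function integrals such as $\int_0^1 |\log t|^n t^{-(1-(\gamma-\Re\eta_0))}\,\D t = n!/(\gamma-\Re\eta_0)^{n+1}$. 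Your version is cleaner in one respect: since Theorem~\ref{newth} is already stated for complex-valued weights $\ell\colon[0,\infty)\to\complex$ and its hypothesis \eqref{hh} only involves $|\ell(t)|=t^{1+\Re\eta}$, no re-proof is needed for complex $\eta$ — only the choice $\sup_K\Re\eta<\gamma<1-\frac1q$ and the observation that the majorant $C\kappa_1^k\,|t|^{\tau-\tilde\tau}/|t|^{1+\Re\eta}$ is uniform on compact $K$, after which Morera, Fubini, dominated convergence, and the locally uniform convergence of $\sum_k a_k(\eta)z^k$ give joint holomorphy. What the paper's power-series route buys in exchange is an explicit local Taylor expansion $F_{\eta_0}(\zeta)=\frac{\Gamma_{\eta_0+\zeta}}{2}\sum_n\frac{\Psi_\phi((\ell_n),1)}{n!}\zeta^n$ with a quantified radius, and it stays entirely within the real-weight framework, at the cost of the coefficient estimates. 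Two minor points you should make explicit if you write this up: the choice of $\tau$ with $\gamma<\tau-\tilde\tau<1-\frac1q$ (so that the $t$-integral near $0$ converges uniformly on $K$), and the $\gamma$-H\"older bound $|g(t)-g(0)|\le C|t|^{\tau-\tilde\tau}$ for $g(t)=\int\phi\rho_t\,\D x$, which follows from \eqref{key}, Lemma~\ref{lem:Lt-L0}, and \eqref{eq:decayestimate_t}; both are routine and consistent with your outline.
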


\subsection{Two linear examples}
\label{lin}

We  illustrate our definitions
with two families of linear tent maps. 
One simplifying feature is that $\rho^\mathrm{reg}_t$ vanishes identically for all $t$ in both examples.
To study $\rho^\mathrm{sal}_0$, we shall use that for any $x\ne u$, we have
\begin{equation} \label{oldstuff}
M^\eta_t H_{t}(x)|_{t=u} = - \frac{1}{2 \Gamma (1 - \eta)}
\frac{1}{|x-u|^\eta}.
\end{equation}
To prove \eqref{oldstuff}, observe that if $x - u > 0$, then
\[
H_{t+u} (x) - H_u (x) = H_{t} (x-u) - H_0 (x-u) =
\left\{ \begin{array}{rl} 0 & \text{if } t < x-u \\ -1 & \text{if } t
  > x - u \, . \end{array} \right 
.\]
Similarly, if $x - u < 0$, then
\[
H_{t+u} (x) - H_u (x) = H_{t} (x-u) - H_0 (x-u) =
\left\{ \begin{array}{rl} 1 & \text{if } t < x-u \\ 0 & \text{if } t
  > x - u \, . \end{array}  \right .
\]
It follows that
\begin{align*}
M^\eta_t H_{t}(x)|_{t=u} &= \frac{\Gamma_\eta}{2}
\int_{-\infty}^\infty \frac{H_{t+u} (x) - H_u (x)}{|t|^{1+\eta}} \sign
(t) \, \D t \\ &= - \frac{\Gamma_\eta}{2} \int_{|x-u|}^\infty
\frac{1}{|t|^{1+\eta}} \, \D t = - \frac{1}{2 \Gamma (1 -
  \eta)} \frac{1}{|x-u|^\eta}\, ,
\end{align*}
as claimed.
Note also  that since $H_{u+t}(x)= H_u(x-t)$ we have 
\begin{align}\label{old2}
M^\eta_y H_{u}(y)|_{y=x}=-M^\eta_t H_{t}(x)|_{t=u}\, , \quad \forall x\ne u \, .
\end{align}

\smallskip
The first example is the family of tent maps with fixed slopes
$\lambda_0\in (1,2)$ given by $\tilde f_t(x)=\lambda_0 x +
\lambda_0-1+ t_0 + t$ for $-1\le x \le 0$, and $\tilde
f_t(x)=-\lambda_0 x + \lambda_0-1+ t_0 + t$ for $0\le x \le 1$, where
$|t|<\ve_1$, for small enough $\ve_1\in (0, \min \{|t_0|, 2-\lambda_0
- t_0\})$. (For $|t| > \ve_1$ we set $\tilde f_t(x)=\tilde
f_{\pm\ve_1}(x)$.)  This family is horizontal because each $\tilde
f_t$ is topologically conjugated to $\tilde f_0$. (Indeed, setting,
$h_t (x) = (\lambda_0 - 1 + t_0 + t) x$, we have $h_t \circ
\tilde{f}_0 (x) = \tilde{f}_t \circ h_t (x)$.  We can also use that
the topological entropy of $\tilde f_t$ is $\log \lambda_0$ for all
$t$.)

We assume that the orbit of $c=0$ is infinite for the sake of simplicity.
We have $\tilde X_0\equiv 1$ on the support of $\tilde \rho_0$, so that 
\eqref{oldstuff} and \eqref{old2} imply 
\[
M^\eta_x (\tilde X_0 \tilde \rho_0) (x) =\sum_{n=1}^{\infty} \bar s_n
M^\eta_x H_{c_n}(x)= \frac{1}{2 \Gamma (1-\eta)} \sum_{n=1}^{\infty}
\frac{s_1}{(\tilde{f}_0^{n-1})'(c_1)} \frac{1}{|x-c_n|^\eta} \, ,
\]
where $(\tilde{f}_0^{n-1})'(c_1)=\sigma_n \lambda_0^{n-1}$, for
$\sigma_n=\sgn (\tilde{f}_0^{n-1})'(c_1)$.  It follows that
\begin{align*}
  \Psi^\mathrm{rsp}_\phi(\eta,z)&=- \frac{1}{2 \Gamma (1-\eta)}
  \sum_{k=0}^\infty z^k \int_I (\phi \circ \tilde f_0^k)
  \sum_{n=1}^{\infty} \frac{s_1 \sigma_n}{\lambda_0^{n-1}}
  \frac{1}{|x-c_n|^\eta} \, \D x
\, .
\end{align*}
Next, observe that since $\tilde f_t(y)=\tilde f_0(y)+t$
for $|t|<\ve_1$, we have $ (\tilde
\LL_t \varphi)(x)= (\tilde \LL_0 \varphi)(x-t)$
for such $t$, so that, using
$\tilde \LL_0 \rho^\mathrm{sal}_0= \rho^\mathrm{sal}_0$, we find, for
any $|t|<\ve_1$,
\begin{equation}\label{magic2}
  (\tilde \LL_t \tilde \rho_0 )(x)=\sum_{n=1}^{\infty}
  \frac{s_1}{(\tilde{f}_0^{n-1})'(c_1)} H_{c_n}(x-t)=\sum_{n=1}^{\infty}
  \frac{s_1 \sigma_n}{\lambda_0^{n-1}} H_{t}(x-c_n)\, .
\end{equation}
Set $H^{(\ve_1)}_{t}(y)=
H_{t}(y)$ if $|t|<\ve_1$, and $H^{(\ve_1)}_{t}(y)=H_{\pm \ve_1}(y)$ if
$\pm t >  \ve_1$.
Then,  we have,
\begin{align}  \label{oldstuff1}
  M^\eta_t H^{(\ve_1)}_{t}(y)|_{t=u} =
  \left\{ \begin{array}{rl} - \frac{1}{2 \Gamma (1 - \eta)}
  \frac{1}{|y - u|^\eta}  \, & \text{if }  0 <|y-u|< \ve_1
\, , 
  \\ 
  0 \qquad\qquad\qquad
  & \text{if }  |y-u|\ge  \ve_1  \, .
   \end{array}  \right .
\end{align}
(The claims above are proved in the same way as \eqref{oldstuff}.) 
Using \eqref{magic2} and \eqref{oldstuff1}, we get, 
\[
M^\eta_t (\tilde \LL_t \tilde \rho_0)(x)|_{t=0}=-\frac{1}{2\Gamma (1-\eta)}
\sum_{n=1}^{\infty} \frac{s_1 \sigma_n}{\lambda_0^{n-1}} \biggl (
\frac{1_{[c_n-\ve_1, c_n+\ve_1]}(x)}{|x-c_{n}|^\eta} \biggr ) \, .
\]
 Finally,
\begin{align*}
\Psi^\mathrm{fr}_\phi(\eta,z)
&=-
\frac{1}{2\Gamma (1-\eta)} \sum_{k=0}^\infty z^k
\int_I (\phi \circ \tilde f_0^k)   \sum_{n=1}^{\infty} \frac{s_1 \sigma_n}{\lambda_0^{n-1}} 
 \frac{1_{[c_n-\ve_1, c_n+\ve_1]}(x)}{|x-c_{n}|^\eta}   \,    \D x
\, .
\end{align*}
So we see that, in the horizontal linear case given by the family
$(\tilde f_t)$, the frozen and response susceptibility function
coincide if $\epsilon_1\ge \sup_n (c_1-c_n, c_n-c_2)$. This does
not seem possible, since $\ve_1 \le \ve_0$, where $\ve_0$ is
given by the uniform Lasota--Yorke bound
Lemma~\ref{lem:lasota_yorke}. However, the two susceptibility
functions are qualitatively similar. (In fact, the relation $\tilde f_t=\tilde f_0 + t$ for $|t|<\epsilon_1$ implies that the response and frozen susceptibilities differ by a function holomorphic in a disc of radius larger than one; this can be shown as in \cite[Proposition~2.5]{BS3}). In addition, if we
replaced\footnote{With this modification, the use of
  $f_t^{(\ve_1)}$ as defined in \eqref{???} would not be needed.}
in the definitions of all fractional susceptibility functions the
Marchaud derivative by the truncated Marchaud derivative
 \begin{equation}\label{truncc} 
M^{\eta, (\ve_1)} g(t_0):=  \frac{\Gamma_\eta}{2}
  \int_{-\ve_1}^{\ve_1} \frac{g (t_0+t) - g(t_0)}{|t|^{1+\eta}} \sign(t)
    \, \D t  \, , 
\end{equation}
then the frozen and response susceptibility functions would coincide for $\tilde f_t$.
(Lemma~\ref{limit} and the integration by parts
formula mentioned in footnote~\ref{samm} both hold for $M^{\eta, (\ve_1)}$.)

\medskip

The second example is the family of tent maps with varying slopes
$\bar f_t(x)=\lambda_t x + \lambda_t-1$ for $-1\le x \le 0$ and $\bar
f_t(x)=-\lambda_t x + \lambda_t-1$ for $0\le x \le 1$, where
$\lambda_t=\lambda_0+t$, for $\lambda_0 \in(1,2)$ and $|t|<\ve_1$,
with small enough $\ve_1<\min (2-\lambda_0, \lambda_0-1)$.  This
family is not horizontal (see e.g.\ \cite[\S4]{Tsu}).  We assume again
that the orbit of $c=0$ for $\bar f_0$ is infinite for the sake of
simplicity.
 
 Setting $\bar v_t=\partial_s\bar f_s|_{s=t}$, and defining $\bar X_t$ by $\bar v_t=\bar X_t\circ \bar f_t$,
we have $\bar v_t(x)=x+1$ if $-1<x<0$ and $\bar v_t(x)=-x+1$ if $0<x<1$,
so that $\bar X_t(y)=(y+1)/\lambda_t$ and $\bar X_0(y)=(y+1)/\lambda_0$, for $y\in [-1,1]$.  
We find, 
 \begin{align}\label{disc1}
 M^\eta_x (\bar X_0 \bar \rho_0) (x)&=
 M^\eta_x\biggl ( \sum_{n=1}^{\infty} \frac{s_1 \sigma_n }{\lambda_0^{n-1}}
  \frac{x+1}{\lambda_0} H_{c_n}(x)\biggr ) \, .
 \end{align}
 (The computation of the Marchaud derivative above is straightforward, but cumbersome,
 using that the sum over $n$ is supported in $[c_2, c_1]$,
 and we do not carry it out here. We just point out that, if we use  $M^{\eta, (\ve_1)}$ from \eqref{truncc}
 instead of $M^\eta$, we have  
\begin{align*} M^{\eta, (\ve_1)}( x H_{c_n}(x))&=
\frac{\Gamma_\eta }{2}\int_{-\ve_1}^{\ve_1} \sign(y)
\frac{ (x+y) \cdot  H_{c_n}(x+y)	-   x \cdot  H_{c_n}(x)}{|y|^{1+\eta}} \, \D y
\\
&=\frac{\Gamma_\eta }{2}\int_{-\ve_1}^{\ve_1} 
\frac{   H_{c_n-x}(y)	}{|y|^{\eta}} \, \D y
+ x \cdot M^{\eta, (\ve_1)}( H_{c_n}(x)) \, ,
\end{align*}
so that the singularity type of $M^{\eta, (\ve_1)}(( x+1 )H_{c_n}(x))$
is $|x-c_n|^{-\eta}$.)

\smallskip
 
We move to the frozen susceptibility function.
We have $\bar f_t(x)=(\lambda_t/\lambda_0) (\bar f_0(x)+1) -1$,
so that
\begin{equation}\label{trr}
\bar \LL_t \varphi(x)= \frac{\lambda_0}{\lambda_t} \cdot 
\bar \LL_0 \varphi\biggl ( \frac{\lambda_0 x -t }{\lambda_t} \biggr  ) \, ,
\end{equation}
for all $|t|<\ve_1$.
Combining \eqref{trr} with $\bar \LL_0 \bar \rho_0^\mathrm{sal}=\bar \rho_0^\mathrm{sal}$, we find
\begin{align*}
\bar \LL_t \bar \rho_0^\mathrm{sal}(x)&=\frac{\lambda_0}{\lambda_t}
\bar \rho_0^\mathrm{sal} \biggl ( \frac{\lambda_0 x -t }{\lambda_t} \biggr  )
= \frac{\lambda_0}{\lambda_0+t} \sum_{n=1}^{\infty} \frac{s_1 \sigma_n }{\lambda_0^{n-1}} 
   H_{c_n}  \biggl ( \frac{\lambda_0 x -t }{\lambda_0+t} \biggr  ) \\
 &= \frac{1}{1+t/\lambda_0} \sum_{n=1}^{\infty} \frac{s_1 \sigma_n}{ \lambda_0^{n-1}} 
 H_{c_n+(c_n+1)t/\lambda_0}   ( x ) \\
  &= \biggl (\sum_{k=0}^\infty \bigl (- \frac{t}{\lambda_0}\bigr)^k\biggr )
  \cdot  \sum_{n=1}^{\infty} \frac{s_1 \sigma_n}{ \lambda_0^{n-1}} 
 H_{t}  \biggl (\lambda_0 \frac{x-c_n}{c_n+1} \biggr )  \, ,
\end{align*}
for all $|t|<\ve_1$, so that
\begin{equation}
  \label{disc2}
  M^\eta_t (\bar \LL_t (\bar \rho_0)(x))|_{t=0}=
  \sum_{k=0}^\infty \frac{(-1)^k}{\lambda_0^k} M^\eta_t \biggl (
  t^k \cdot \sum_{n=1}^{\infty} \frac{s_1 \sigma_n}{
    \lambda_0^{n-1}} H_{t} \biggl (\lambda_0 \frac{x-c_n}{c_n+1}
  \biggr ) \biggr)\bigg |_{t=0} \, .
\end{equation}
Comparing \eqref{disc1} with \eqref{disc2}, we see that the
discrepancy between $M^\eta_x (\bar X_0 \bar \rho_0) (x)$ and
$-M^\eta_t (\bar \LL_t (\bar \rho_0)(x))|_{t=0}$, and thus
between $\Psi^\mathrm{fr}_\phi(\eta, z)$ and
$\Psi^\mathrm{rsp}_\phi(\eta, z)$, is more marked for the family
$\bar f_t$ than for the family $\tilde f_t$.


\section{Transfer operators and Sobolev spaces}
\label{tools}

\subsection{Basic definition and properties}

\label{standard}

We will use the  Sobolev spaces $\HH^{\tau,p}=\HH^{\tau,p}(I)$ of functions $\varphi
\in L^p(\real)$ supported in $I$ such that
\[
\| \varphi \|_{\HH^{\tau,p}} = \| \mathscr{F}^{-1} ((1 +
|\xi|^2)^{\tau/2}( \mathscr{F} \varphi)) \|_{L^p} < \infty \, ,
\]
where $p>1$ and $\tau\in [0, 1/p)$ are real numbers, and $\mathscr{F}$ denotes the Fourier transform. 
(Note that $\HH^{\tau,p}(I)\subset \HH^{\tilde \tau,p}(I)$
if $\tilde \tau < \tau$, and that the embedding is compact.)
 
Let $f_t$ be a piecewise $C^2$ expanding unimodal map.
Thomine \cite{Thomine} showed that the transfer operator
$\LL_t$ is bounded on $ \HH^{\tau,p}$, for any 
$1<p <\infty$ and $0 \le  \tau
< \frac{1}{p}$. 
More precisely, there exists a constant $C=C(\sup |f''_t|, \inf |f'_t|)$ such that
 \begin{equation}
\label{unifop}\| \LL_t \varphi \|_{\HH^{\tau,p}} \leq C 
    \| \varphi \|_{\HH^{\tau,p}}\, ,
\quad  \forall \varphi \, , \, 
  \forall 0\le \tau <1/p \, .
\end{equation}

Recall that the
essential spectral radius
$r_\mathrm{ess}(\LL|_\cB)$  of a bounded operator  $\cL \colon \cB \to \cB$  on a Banach space
$\cB$ is  the smallest $r\ge 0$ such that
the spectrum of $\cL$ on $\cB$ in the complement  of the disc of radius $r$
 consists of isolated
    eigenvalues of finite multiplicity.
Thomine \cite[Theorem~1.3]{Thomine} proved that,
 for any $1<p <\infty$ and $0 < \tau
< \frac{1}{p}$, we have\footnote{Indeed, in our one-dimensional unimodal setting Thomine's ``$n$-complexity at the beginning'' is bounded by $2$, and his
``$n$-complexity at the end'' is bounded by $2^n$.
Cf.\ the proof of our uniform Lasota--Yorke estimate Lemma~\ref{lem:lasota_yorke}.}
\begin{align*}
  r_\mathrm{ess}(\LL_t|_{\HH^{\tau,p}(I)}) 
  &\leq \lim_{n \to  \infty} 
   \Bigl\| 2^{\frac 1 p}\cdot 2^{n(1- \frac1p)} |(f_t^n)'|^{-1 + \frac{1}{p}} (\lambda_n(f_t))^{-\tau } \Bigr\|_\infty^\frac{1}{n} \\
    &\leq \lim_{n \to \infty}
2^{1- \frac1p} (\lambda_n(f_t))^{(-\tau - 1 + \frac1p)/n}\, .
\end{align*}

In particular, if $\tilde \Lambda \in (1, \Lambda(f_t))$,
we find $p(f_t)=p(f_t,\tilde \Lambda)>1$ such that
\begin{align}\label{thom} r_\mathrm{ess}(\LL_t|_{\HH^{\tau,p}}) < 
 \tilde \Lambda^{- \tau - 1 +   \frac1p} <1  \, ,
 \quad \forall\,  1<p <p(f)\, ,\,\forall \,  0 < \tau
 < \frac{1}{p}\, .
\end{align}
By standard arguments (see \cite[Theorem~1.6]{Thomine}), using that $f_t$ is unimodal 
and thus topologically transitive on $[f^2_t(c), f_t(c)]$, and that the dual of $\LL_t$ preserves Lebesgue measure $\D x$
on $I$, it follows that for such $\tau$ and  $p$ the spectral radius
of $\LL_t$ on $ {\HH^{\tau,p}}$ is 
equal to one, that the invariant density
 $\rho_t$ belongs to $\HH^{\tau,p}$, 
(extending $\rho_t$ by zero outside of its domain),  
that  $\rho_t$ is the unique  fixed point of $\LL_t$
in ${\HH^{\tau,p}}$, and that the algebraic multiplicity
of the eigenvalue $1$ is equal to one. 

If $0<\tilde \tau < \tau$ then, outside of the closed disc of radius $\tilde \Lambda^{- \tilde \tau - 1 +   \frac1p}$,
 the spectrum of $\LL_t$ on 
 ${\HH^{\tilde \tau,p}}(I)$ and $ {\HH^{\tau,p}}(I)$ coincide, including
 multiplicity and generalised eigenspaces (this follows from the
 fact that both spaces are continuously embedded in $L^p$
 while $C^1$ functions are dense  in both spaces \cite[2.1.3, Proposition~1]{RS}, applying the result
 in \cite[App.~A]{BaladiTsujii}, see also \cite[App.~A.2]{BaladiZeta}).

 \subsection{Stability of mixing and mixing rates for good families}
 \label{stabm}

To prove stability of the spectrum  of the perturbation $(f_t)$,
the following uniform Lasota--Yorke lemma is crucial.  (A version of
Lemma~\ref{lem:lasota_yorke} is established for the BV and $L^1$ norm,
see\footnote{Above equation (26) in \cite{BS} there is a mistaken
  reference to Remark 5 in \cite{KellerLiverani} instead.}
e.g.\ \cite[comment above Remark 2]{KellerLiverani}. The proof for our
Sobolev spaces is given in Appendix~\ref{AA}.)

\begin{lemma}[Uniform Lasota--Yorke bound]\label{lem:lasota_yorke}
Let $(f_t)_{|t|\le \ve}$ be a $C^{2}$  perturbation of
  a  piecewise $C^2$ expanding unimodal map $f=f_0$. Assume that either $f$ is good or
all the $f_t$ are topologically conjugated to $f$, and recall $\Lambda>1$ from
Definition~\ref{defL}.
Then for any $\tilde \Lambda < \Lambda$ there exist $p_0\in (1, p(f))$ and $\ve_0\le \ve$ such that for any 
    $p \in (1, p_0)$ and  $0 \le \tilde\tau < \tau < \frac1p$
  there exist finite constants $C_0$ and $C$ 
   such that
\begin{align}\label{eq:lasota_yorke}
  \| \LL_t^n \varphi \|_{\HH^{\tau,p}} \leq C 
   \tilde \Lambda^{(- \tau - 1 +   \frac1p)n}
  \| \varphi \|_{\HH^{\tau,p}} + C C_0^n \| \varphi
  \|_{\HH^{\tilde\tau,p}} \, , \quad \forall\,  |t|<\ve_0\, , \, \, 
  \forall \, n \ge 0\, , \, \, \forall \, \varphi\, .
\end{align}
\end{lemma}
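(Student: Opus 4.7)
\textbf{Proof plan for Lemma~\ref{lem:lasota_yorke}.}

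The strategy is to first establish a one-step Lasota--Yorke bound for a single $\LL_t$ on $\HH^{\tau,p}$ uniformly in $t$, then to iterate it with sufficient care so that in the periodic case we pick up the factor $2$ from \eqref{defLe}. The one-step bound should have the form
\begin{equation}\label{onestep}
\|\LL_t \varphi\|_{\HH^{\tau,p}} \le C_1 \lambda(f_t)^{-1-\tau+\frac1p}\, \|\varphi\|_{\HH^{\tau,p}} + C_2 \|\varphi\|_{\HH^{\tilde\tau,p}}\, ,
\end{equation}
with $C_1,C_2$ depending only on $\sup|f_t''|$ and $\inf|f_t'|$. This is the local version of Thomine's inequality obtained, as in \cite{Thomine}, by decomposing $\LL_t\varphi$ branchwise, using a characterisation of $\HH^{\tau,p}$ norms in terms of Lizorkin--Triebel differences, and separating the contribution of the boundary of each monotonicity interval (which lives in the weaker $\HH^{\tilde\tau,p}$ norm via the compact embedding). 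Because the $C^2$ perturbation has uniformly controlled first and second derivatives and since $\lambda(f_t)\ge \lambda(f_0)/2$ for $|t|$ small, the constants in \eqref{onestep} may be chosen independent of $t$ for $|t|\le \ve_0$ sufficiently small.

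Next, I would iterate \eqref{onestep} along the orbit of $\varphi$ under $\LL_t^n$, using at each step the branchwise decomposition of $f_t^n$. Writing $\LL_t^n\varphi = \sum_{\vec\sigma} w_{\vec\sigma}\cdot \varphi\circ f_{t,\vec\sigma}^{-n}$ over the $N_n(t)$ monotone inverse branches and arguing as for \eqref{onestep}, one gets
\begin{equation*}
\|\LL_t^n\varphi\|_{\HH^{\tau,p}} \le C\, N_n(t)^{1-\frac1p}\,\lambda_n(f_t)^{-1-\tau+\frac1p}\,\|\varphi\|_{\HH^{\tau,p}} + C\, C_0^n \|\varphi\|_{\HH^{\tilde\tau,p}}\, ,
\end{equation*}
where $C_0$ depends only on the $C^2$ norms of the branches. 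In the topologically conjugated case, $N_n(t)$ is bounded by a fixed polynomial in $n$, hence absorbed into $\tilde\Lambda^{\epsilon n}$ for any small $\epsilon$, and the claim follows from $\Lambda(f_t)\ge \Lambda$ uniformly. In the good periodic case one has $N_n(t)\le C\cdot 2^{n/P_f}$ (the orbit of $c$ re-enters the two branches at each period), and the definition \eqref{defLe} gives, for $n=kP_f$,
\begin{equation*}
  2^{k(1-\frac1p)}\,\lambda_{kP_f}(f_t)^{-1-\tau+\frac1p} \le \bigl(2^k/\lambda_{kP_f}(f_t)\bigr)^{1-\frac1p}\lambda_{kP_f}(f_t)^{-\tau} \le C\,\tilde\Lambda^{(-\tau-1+\frac1p)kP_f}\, ,
\end{equation*}
uniformly in $|t|\le \ve_0$; the estimate for arbitrary $n$ is obtained by writing $n=kP_f+r$ with $0\le r<P_f$ and absorbing the bounded factor into $C$.

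\textbf{Expected main obstacle.} The delicate point is the uniform control, over the whole family $(f_t)$, of the combinatorial complexity $N_n(t)$. In the topologically conjugated case this is automatic, but in the good periodic case one must ensure that the estimate $N_n(t)\lesssim 2^{n/P_f}$ persists for $t\ne 0$, even though the critical orbit of $f_t$ is no longer periodic and may generate many more branches before returning near $c$. This is precisely where goodness ($|(f^{P_f})'(c)|>2$) together with the uniform definition \eqref{defLe} of $\Lambda$ enters: the extra branches only contribute on a scale where the expansion has already dominated $2$, so that the iterated bound from \eqref{onestep} remains valid with the rate $\tilde\Lambda^{(-\tau-1+1/p)n}$ uniformly in $t$. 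Extending the Keller--Liverani framework to the Sobolev setting also requires checking that the second, weaker-norm term truly has norm $\|\varphi\|_{\HH^{\tilde\tau,p}}$ and not merely $\|\varphi\|_{L^p}$; this follows by revisiting Thomine's cutoff argument with a Littlewood--Paley decomposition of order $\tilde\tau$ instead of $0$.
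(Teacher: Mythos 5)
There is a genuine gap in the combinatorial accounting, which is the heart of this lemma. Your single quantity $N_n(t)$ (the number of monotone branches of $f_t^n$) is asked to play two incompatible roles. On the one hand, the factor $N_n(t)^{1-\frac1p}$ in your iterated bound corresponds to Thomine's ``complexity at the end'' (how many branch images overlap at a point), which in the unimodal setting is bounded by $2^n$ --- not by a polynomial in $n$ in the conjugated case, and not by $C\cdot 2^{n/P_f}$ in the periodic case: the lap number of $f_t^n$ grows like $e^{n h_{\mathrm{top}}}$ with $h_{\mathrm{top}}>0$. That factor is beaten not by any combinatorial miracle but by taking $p$ close to $1$, so that $2^{n(1-\frac1p)}$ is negligible against $\lambda_n(f_t)^{-(\tau+1-\frac1p)}$; this is precisely why a $p_0$ close to $1$ appears in the statement. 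On the other hand, the quantity that is bounded by $2$ (non-periodic $c$) or by $2^{n/P_f}$ (periodic $c$), and that is absorbed by the $2^k$ built into \eqref{defLe} via goodness, is the ``complexity at the beginning'': the number of branches $J\in\cA(t,n)$ meeting a sufficiently \emph{small} interval. This quantity is not even visible in your decomposition, because you never localise: the paper first zooms and cuts with a partition of unity $\psi_{m,n}$ supported on intervals of size $Z_n^{-1}$, and only after that is the statement ``at most $2$ (resp.\ $2^{k_1}$) intervals $J$ meet $\supp \psi_{m,n}$'' true, the same zoom also making the bounded-distortion step effective. Your displayed absorption of $2^{k(1-\frac1p)}$ via \eqref{defLe} is therefore aimed at the wrong factor.

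The second gap is the uniformity in $t$ of the beginning-complexity bound, which you correctly flag as the main obstacle but do not resolve: ``the extra branches only contribute on a scale where the expansion has already dominated $2$'' is not an argument. The paper's resolution is to fix the iterate $N$ \emph{first} (via \eqref{largeN} or \eqref{largeN'}) and only then shrink $\ve_0=\ve'(N)$ so that $\cA(t,N)$ is comparable to $\cA(0,N)$ (a bijection in the non-periodic case; an injection with at most $2^{k_1}-2$ extra intervals between consecutive images in the periodic case, citing \cite{BY}); general $n$ is then handled by submultiplicativity from the single iterate $N$ together with \eqref{unifop}, not by iterating a one-step inequality, whose contraction factor need not be smaller than $1$. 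Finally, your worry about upgrading the weak norm from $L^p$ to $\HH^{\tilde\tau,p}$ is moot: since $\HH^{\tilde\tau,p}\hookrightarrow L^p$, an $L^p$ weak term is the \emph{stronger} statement, and it is what the paper's argument actually produces.
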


\smallskip
  
  We shall also use the following perturbation estimate, proved 
in \S\ref{theend}:

 \begin{lemma}[Perturbation bound] \label{lem:Lt-L0} Let $(f_t)_{|t|\le \ve}$ be a $C^{2}$ perturbation of
  a piecewise $C^2$ expanding unimodal map $f_0$.
  For any $p>1$ and $0  < \tilde \tau < \frac{1}{p}$ there exists $C<\infty$ such that
  \begin{equation} \label{the:Lt-L0}
    \| (\LL_t - \LL_0) \varphi 
    \|_{\HH^{\tilde{\tau},
        p}} \leq  C|t|^{\tau - \tilde{\tau}} \| \varphi
    \|_{\HH^{\tau, p}} \, , \quad \forall\,  |t|<\ve 
    \, , \, \forall \,  
     \tau \in (\tilde \tau, \frac 1 p )\, , \,\,  \forall\,  \varphi\, .
  \end{equation}
  \end{lemma}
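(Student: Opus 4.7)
\medskip

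\noindent\textbf{Proof plan for Lemma~\ref{lem:Lt-L0}.} The plan is to reduce the estimate to a small collection of standard estimates on the Bessel potential spaces $\HH^{\tau,p}(\mathbb{R})$ (composition with a $C^2$ diffeomorphism, multiplication by a $C^1$ function, and a translation modulus of continuity), by decomposing the transfer operator over its two monotone branches. For $\sigma \in \{+,-\}$ set $h_{t,\sigma} = (f_{t,\sigma})^{-1}$, $w_{t,\sigma}(x) = |f_t'(h_{t,\sigma}(x))|^{-1}$, and $J_{t,\sigma} = f_t(I_\sigma)$, so that
\[
\LL_t \varphi = \sum_{\sigma \in \{\pm\}} w_{t,\sigma}\cdot (\varphi \circ h_{t,\sigma}) \cdot \mathbf{1}_{J_{t,\sigma}} \, .
\]
Since $(f_t)$ is a $C^2$ perturbation of $f_0$, the inverse branches and weights satisfy $\|h_{t,\sigma}-h_{0,\sigma}\|_{C^1}=O(|t|)$ and $\|w_{t,\sigma}-w_{0,\sigma}\|_{C^1}=O(|t|)$ uniformly in $x$, and the endpoints of $J_{t,\sigma}$ move by $O(|t|)$.

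Fixing $\sigma$, I would split the difference into three natural pieces:
\[
(A_\sigma): (w_{t,\sigma}-w_{0,\sigma})(\varphi\circ h_{t,\sigma})\mathbf{1}_{J_{t,\sigma}}, \quad (B_\sigma): w_{0,\sigma}(\varphi\circ h_{t,\sigma}-\varphi\circ h_{0,\sigma})\mathbf{1}_{J_{t,\sigma}}, \quad (C_\sigma): w_{0,\sigma}(\varphi\circ h_{0,\sigma})(\mathbf{1}_{J_{t,\sigma}}-\mathbf{1}_{J_{0,\sigma}}).
\]
The three tools I would use, all standard for $0\le\tilde\tau<\tau<1/p$: (i) multiplication by a function bounded in $C^1$ acts boundedly on $\HH^{\tilde\tau,p}$; (ii) composition with a $C^2$ orientation-preserving diffeomorphism of an interval acts boundedly on $\HH^{\tilde\tau,p}$ (with a constant continuous in the $C^2$ norm of the diffeomorphism); (iii) the translation modulus of continuity
\[
\|\varphi(\cdot+h)-\varphi(\cdot)\|_{\HH^{\tilde\tau,p}} \le C|h|^{\tau-\tilde\tau}\|\varphi\|_{\HH^{\tau,p}}\, ,
\]
which follows by complex interpolation between the trivial bound $\HH^{\tau,p}\to\HH^{\tau,p}$ and the bound $\|\varphi(\cdot+h)-\varphi(\cdot)\|_{L^p}\le C|h|^\tau\|\varphi\|_{\HH^{\tau,p}}$, together with the identification $[L^p,\HH^{\tau,p}]_{\tilde\tau/\tau}=\HH^{\tilde\tau,p}$. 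With these, term $(A_\sigma)$ is $O(|t|)\|\varphi\|_{\HH^{\tilde\tau,p}}$, hence $O(|t|^{\tau-\tilde\tau})\|\varphi\|_{\HH^{\tau,p}}$ for small $|t|$; term $(C_\sigma)$ is controlled by the $\HH^{\tilde\tau,p}$ norm of the indicator of an interval of length $O(|t|)$, which is $O(|t|^{1/p-\tilde\tau})$ and thus even better than required since $\tau<1/p$.

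The main obstacle will be term $(B_\sigma)$, i.e.\ passing from a translation statement on $\mathbb{R}$ to a composition-difference statement. I would treat it by writing $\varphi\circ h_{t,\sigma}-\varphi\circ h_{0,\sigma} = (\psi_t-\varphi)\circ h_{0,\sigma}$, where $\psi_t(y)=\varphi(y+\tilde\delta_t(y))$ with $\tilde\delta_t(y) = h_{t,\sigma}(h_{0,\sigma}^{-1}(y))-y$ satisfying $\|\tilde\delta_t\|_{C^1}=O(|t|)$; by tool (ii) it suffices to bound $\|\psi_t-\varphi\|_{\HH^{\tilde\tau,p}}$. This is the diffeomorphism analogue of (iii) and can again be obtained by complex interpolation between the trivial $\HH^{\tau,p}\to\HH^{\tau,p}$ estimate (using that $y\mapsto y+\tilde\delta_t(y)$ is a near-identity $C^2$ diffeomorphism) and the pointwise/$L^p$ bound $\|\psi_t-\varphi\|_{L^p}\le C\|\tilde\delta_t\|_\infty^\tau\|\varphi\|_{\HH^{\tau,p}}$, which reduces to (iii) by a dyadic decomposition of $\tilde\delta_t$ or, more directly, by observing $\psi_t(y)-\varphi(y)=\varphi(y+\tilde\delta_t(y))-\varphi(y)$ and bounding $\varphi(\cdot+s)-\varphi(\cdot)$ uniformly over $|s|\le \|\tilde\delta_t\|_\infty$. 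Extending $\varphi$ by zero outside $I$ and handling the potential cutoff at the boundary via tool (i) completes the argument. Summing over $\sigma$ yields \eqref{the:Lt-L0}.
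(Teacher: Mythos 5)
Your decomposition is essentially the paper's: both split $\LL_t-\LL_0$ branchwise into a weight-difference term (controlled by $C^1$-multiplier bounds), a composition-difference term (controlled by the estimate $\|\varphi-\varphi\circ T\|_{\HH^{\tilde\tau,p}}\le C\|I-T\|_{C^1}^{\tau-\tilde\tau}\|\varphi\|_{\HH^{\tau,p}}$, which is the paper's Lemma~\ref{lem:differenceandcomposition} and is proved by exactly the interpolation you sketch), and a support-mismatch term carried by a set of measure $O(|t|)$. Your terms $(A_\sigma)$ and $(B_\sigma)$ are handled as in the paper, including the trick of writing $\varphi\circ h_{t,\sigma}-\varphi\circ h_{0,\sigma}=(\varphi\circ h_{t,\sigma}\circ h_{0,\sigma}^{-1}-\varphi)\circ h_{0,\sigma}$.

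The genuine gap is in $(C_\sigma)$. You claim that $w_{0,\sigma}(\varphi\circ h_{0,\sigma})(1_{J_{t,\sigma}}-1_{J_{0,\sigma}})$ is ``controlled by the $\HH^{\tilde\tau,p}$ norm of the indicator of an interval of length $O(|t|)$,'' i.e.\ you factor out $\|1_S\|_{\HH^{\tilde\tau,p}}=O(|t|^{1/p-\tilde\tau})$. This requires $\varphi\circ h_{0,\sigma}$ to act as a pointwise multiplier on $\HH^{\tilde\tau,p}$, which it does not: since $\tau<1/p$, the space $\HH^{\tau,p}$ embeds only into $L^q$ with $\frac1q=\frac1p-\tau$, so $\varphi$ is in general unbounded and is not a multiplier, and the exponent $1/p-\tilde\tau$ is not attainable for the product. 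The paper instead bounds the support-mismatch term in $L^p$ by H\"older's inequality, $\|1_S u\|_{L^p}\le |S|^{\frac1p-\frac1q}\|u\|_{L^q}$, combined with the Sobolev embedding $\HH^{\tau,p}\hookrightarrow L^q$ for $\tau=\frac1p-\frac1q$, giving $O(|t|^{\tau})\|\varphi\|_{\HH^{\tau,p}}$, and then interpolates (at $\theta=\tilde\tau/\tau$, using $[L^p,\HH^{\tau,p}]_{\tilde\tau/\tau}=\HH^{\tilde\tau,p}$) against the $t$-uniform bound $\|1_S\LL_t\varphi\|_{\HH^{\tau,p}}\le C\|\varphi\|_{\HH^{\tau,p}}$ to obtain the factor $|t|^{\tau-\tilde\tau}$, which is what the lemma asserts. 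That uniform bound, and indeed every term in your decomposition, also relies on Strichartz's theorem that indicators of intervals are uniformly bounded multipliers on $\HH^{s,p}$ for $s<1/p$; this is missing from your list of tools, and your tool (i) (multiplication by $C^1$ functions) cannot substitute for it since $1_{J_{t,\sigma}}$ is not continuous. With Strichartz added and $(C_\sigma)$ reworked along the H\"older--Sobolev--interpolation lines above, your plan closes.
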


 Assume now that $f_t=f$ is mixing.  Then for any $\HH^{\tau,p}$ with
 $p\in (p(f),1)$ and $\tau< 1/p$, we have that $1$ is the only
 eigenvalue of the transfer operator $\LL_0$ of $f$ on the unit circle
 (adapting e.g.\ the proof \cite[Theorem~3.5]{Book} for $\LL_t$ acting
 on $BV$).  In other words, the operator $\LL_0$ has a spectral gap on
 $ {\HH^{\tau,p}}$. The following notation will be useful:
    
\begin{defn}[Maximal  eigenvalue $\kappa<1$ of  a  mixing piecewise $C^2$ expanding
unimodal map $f$]
\label{defkappa}
Let $f$ be a  mixing piecewise $C^2$ expanding
unimodal map.
If there exist $p>1$ and $\tau< 1/ p$ such that $\LL_0$ on 
$ {\HH^{\tau,p}}$ has an
eigenvalue $\zeta\ne 1$ with $|\zeta| >\Lambda(f)^{-   \tau - 1 +   \frac{1}{ p}}$, then we set
$\kappa(f) <1$ to be the maximal such modulus $|\zeta|$. Otherwise we set $\kappa(f)=0$.
\end{defn}

\smallskip
Recalling the notation from \eqref{thom}, fix  $ \tau< 1/ p$ for $p < p(f,\tilde \Lambda)$. Then  
      for any  $\kappa_0 > \max (\kappa(f), \tilde \Lambda^{-  \tau - 1 +   \frac1p})$
 there exists
$C$       such that 
\begin{equation} \label{eq:decayestimate}
 \| \LL_0^n \varphi \|_{\HH^{\tau ,p}} \leq C
        \kappa_0^n \| \varphi \|_{\HH^{\tau,p}}\, ,
        \quad \forall n \ge 0\, , \, \, \forall  \varphi \in \HH^{\tau,p}_0:= \{\, \varphi \in \HH^{\tau,p}(I) \mid
         \int \varphi \,\D x =0 \,\}\, .
\end{equation}

  Lemmas~\ref{lem:lasota_yorke} and \ref{lem:Lt-L0} put us in a 
 position to apply the results of Keller--Liverani and show stability
of \eqref{eq:decayestimate}:
   Since $\HH^{\tau,p}_0$ is invariant under each $\LL_t$, \cite[Corollary~2~(2)]{KellerLiverani} gives
for any  $\kappa_1 > \max (\kappa (f) , \Lambda^{-  \tau - 1 +   \frac1p})$  
constants\footnote{$C(\tau,p)$  tends to infinity as $\eta$ tends to $1$.}
 $C=C(\tau,p) < \infty$, $\ve_1\in (0,\ve_0)$
such that  
\begin{equation}\label{eq:decayestimate_t}
\|  \LL_t^n \varphi \|_{\HH^{ \tau,p}} \le C \kappa_1^n
\|\varphi \|_{\HH^{ \tau,p}}\, 
   , \quad \forall n \ge 1\, , \, \, \forall \, |t| \le \ve_1 \, , \, \, \forall  \varphi \in \HH^{\tau,p}_0\, .
\end{equation}  
The uniform mixing rate given by the above bound will be crucial to establish
our main theorem.

\subsection{Four basic lemmas and the proof of the perturbation Lemma~\ref{lem:Lt-L0}}
\label{theend}

We end this section by showing the perturbation
Lemma~\ref{lem:Lt-L0}. The proof will use the following four standard
lemmas (the first three lemmas are also instrumental in the proof of
Lemma~\ref{lem:lasota_yorke}):
\begin{lemma}[{Triebel \cite[Section~4.2.2]{Triebel}}] \label{lem:multiplicationbysmooth}
  Suppose that $g \in C^\gamma$ where $\gamma > \tau$, and let $p >
  1$. Then there exists $C = C(\tau, p, \gamma) > 0$ such that
  $
  \| g \varphi \|_{\HH^{\tau,p}} \leq C \| g \|_{C^\gamma}
  \| \varphi \|_{\HH^{\tau,p}}
 $.
\end{lemma}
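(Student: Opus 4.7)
The plan is to prove this multiplier estimate via the standard Littlewood-Paley/paraproduct approach used throughout Triebel's monographs. I would first fix a smooth dyadic partition of unity $\{\psi_j\}_{j \ge 0}$ on the Fourier side, supported in annuli $\{|\xi| \sim 2^j\}$ for $j \ge 1$ (with a low-frequency bump for $j=0$), and introduce the associated Littlewood-Paley projectors $\Delta_j \varphi = \mathscr{F}^{-1}(\psi_j \mathscr{F}\varphi)$ and partial sums $S_k = \sum_{j < k} \Delta_j$. Two Fourier-side characterizations form the backbone of the proof. First, $\HH^{\tau,p}$ coincides with the Triebel-Lizorkin space $F^{\tau}_{p,2}$, with equivalent norm
$$\|\varphi\|_{\HH^{\tau,p}} \sim \Bigl\|\Bigl(\sum_j 2^{2\tau j} |\Delta_j \varphi|^2\Bigr)^{1/2}\Bigr\|_{L^p},$$
this being a consequence of the Mihlin-Hörmander multiplier theorem applied to Bessel potentials, valid precisely for $1 < p < \infty$. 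Second, the Hölder-Zygmund space $C^\gamma$ coincides with the Besov space $B^\gamma_{\infty,\infty}$, equivalently normed by $\sup_j 2^{\gamma j} \|\Delta_j g\|_{L^\infty}$.

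Second, I would decompose the product by Bony's paraproduct formula,
$$g \varphi = T_g \varphi + T_\varphi g + R(g, \varphi),$$
where $T_a b = \sum_j S_{j-N} a \cdot \Delta_j b$ for a fixed large $N$, and $R(g,\varphi) = \sum_{|j-k| \le N} \Delta_j g \cdot \Delta_k \varphi$. The crucial spectral observation is that each summand of $T_g \varphi$ has Fourier support in a dyadic annulus of scale $2^j$ comparable to that of $\Delta_j \varphi$, so its contribution to the Littlewood-Paley square function is bounded pointwise by $|S_{j-N} g| \cdot |\Delta_j \varphi| \le C \|g\|_{L^\infty} |\Delta_j \varphi|$. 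Taking $L^p$ norms of the square function yields $\|T_g \varphi\|_{\HH^{\tau,p}} \le C \|g\|_{L^\infty} \|\varphi\|_{\HH^{\tau,p}} \le C \|g\|_{C^\gamma} \|\varphi\|_{\HH^{\tau,p}}$. For $T_\varphi g$, I would use $\|\Delta_j g\|_{L^\infty} \le C \, 2^{-\gamma j} \|g\|_{C^\gamma}$ together with the Fefferman-Stein vector-valued maximal inequality to control $|S_{j-N} \varphi|$; the resulting Fourier-localized pieces carry a $2^{(\tau - \gamma) j}$ factor, geometrically summable precisely because $\gamma > \tau$. The remainder $R(g,\varphi)$, whose summands have Fourier support in balls (not annuli) of scale $2^j$, is controlled analogously by combining $\|\Delta_j g\|_{L^\infty} \le C \, 2^{-\gamma j} \|g\|_{C^\gamma}$ with the Littlewood-Paley norm on $\varphi$ and summing the geometric factor $2^{(\tau - \gamma) j}$.

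The main obstacle is technical rather than conceptual: one must carefully track the Fourier supports so that Bernstein's inequalities and the Fefferman-Stein vector-valued maximal theorem apply in their sharp form (the latter requiring $1 < p < \infty$), and one must invoke the equivalence between the Bessel-potential norm on $\HH^{\tau,p}$ and the Littlewood-Paley square-function norm, which ultimately rests on Mihlin-Hörmander. The hypothesis $\gamma > \tau$ enters precisely as the geometric-summation condition for the delicate term $T_\varphi g$ and the remainder $R(g,\varphi)$; the restriction $\tau < 1/p$ from the broader Sobolev framework of the excerpt plays no role in this lemma, which is valid for any $0 \le \tau < \gamma$ with $1 < p < \infty$. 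Combining the three bounds yields $\|g \varphi\|_{\HH^{\tau,p}} \le C \|g\|_{C^\gamma} \|\varphi\|_{\HH^{\tau,p}}$ as claimed, with a constant depending on $\tau$, $\gamma$, and tacitly on $p$ (which is fixed in the applications of the lemma).
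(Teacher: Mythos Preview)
Your proposal is correct and follows the standard paraproduct/Littlewood--Paley route used in Triebel's monograph. Note, however, that the paper does not actually prove this lemma: it simply states the result with a citation to \cite[Section~4.2.2]{Triebel}, so there is no ``paper's own proof'' to compare against beyond the reference itself, and your sketch is precisely the argument one finds there.
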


\begin{lemma}[See e.g.\  {\cite[Lemma~3.3]{Thomine}}] \label{lem:compositionbysmooth}
  Let $T$ be a $C^1$ diffeomorphism  of $\bR$ such that
  $\frac{A}{2} \le |T'(x)| \le 2A$ for some $A>0$ and all $x \in \bR$. Then, for all
  $0 \leq \tau \leq 1$ and $p > 1$ there exists $C = C(\tau,p) > 0$
  such that
  $
  \| \varphi \circ T \|_{\HH^{\tau,p}} \le C A^{\tau - \frac1p}
  \| \varphi \|_{\HH^{\tau,p}} + C A^{-\frac1p} \| \varphi
  \|_{L^p}
  $.
\end{lemma}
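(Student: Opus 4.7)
The strategy is to establish the endpoint estimates at $\tau = 0$ and $\tau = 1$ by direct computation, and then obtain the intermediate fractional range by interpolation. The two-term form in the conclusion already suggests splitting off the $L^p$ contribution from the fractional-smoothness contribution and estimating each separately.

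At the $\tau = 0$ endpoint, the change of variables $y = T(x)$ together with $|T'|^{-1} \le 2 A^{-1}$ gives
\[
\|\varphi \circ T\|_{L^p}^p = \int_\bR |\varphi(y)|^p \frac{\D y}{|T'(T^{-1}(y))|} \le 2 A^{-1} \|\varphi\|_{L^p}^p,
\]
so $\|\varphi \circ T\|_{L^p} \le C A^{-1/p} \|\varphi\|_{L^p}$, which accounts for the second term of the lemma already at this endpoint. At $\tau = 1$, the chain rule $(\varphi \circ T)' = T' \cdot (\varphi' \circ T)$, combined with $|T'| \le 2A$ and the previous step applied to $\varphi'$, gives $\|(\varphi \circ T)'\|_{L^p} \le C A^{1-1/p} \|\varphi'\|_{L^p}$. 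Adding the $L^p$ estimate and using the equivalence $\|\cdot\|_{\HH^{1,p}} \approx \|\cdot\|_{L^p} + \|\cdot'\|_{L^p}$ yields
\[
\|\varphi \circ T\|_{\HH^{1,p}} \le C A^{1 - 1/p} \|\varphi\|_{\HH^{1,p}} + C A^{-1/p} \|\varphi\|_{L^p},
\]
matching the target at $\tau=1$.

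For intermediate $\tau \in (0,1)$, I would invoke complex interpolation, using that the Bessel potential space arises as $\HH^{\tau,p} = [L^p, \HH^{1,p}]_\tau$. A naive application of the Calder\'on interpolation theorem to the operator $\mathcal{T}\varphi := \varphi \circ T$ only yields the weaker bound $C(A^{\tau - 1/p} + A^{-1/p})\|\varphi\|_{\HH^{\tau,p}}$, so to recover the precise two-term form I would instead use an equivalent norm decomposition $\|\varphi\|_{\HH^{\tau,p}} \approx \|\varphi\|_{L^p} + [\varphi]_{\tau,p}$, with $[\cdot]_{\tau,p}$ a translation-invariant fractional seminorm (a Littlewood--Paley square function, or the equivalent Triebel--Lizorkin $F^\tau_{p,2}$ characterization of $\HH^{\tau,p}$). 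The $L^p$ piece is controlled by the $\tau = 0$ step, while the fractional seminorm is handled by interpolating the $L^p$ and the homogeneous $\dot W^{1,p}$ estimates to produce $[\varphi \circ T]_{\tau,p} \le C A^{\tau - 1/p} [\varphi]_{\tau,p}$; bounding $[\varphi]_{\tau,p}$ by $\|\varphi\|_{\HH^{\tau,p}}$ delivers the first term.

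The hard part is the composition estimate for the fractional seminorm itself. Unlike the Slobodeckij/Besov situation in which the Gagliardo double integral $\iint |\varphi(x) - \varphi(y)|^p |x-y|^{-1-\tau p} \,\D x\,\D y$ transforms transparently under a bilipschitz change of variable, the Triebel--Lizorkin square function adapted to $\HH^{\tau,p}$ for $p \ne 2$ does not have such a clean geometric description. A feasible route is to apply Calder\'on's complex interpolation method to the analytic family $s \mapsto (I-\Delta)^{-s/2} \mathcal{T} (I-\Delta)^{s/2}$ for $\Re s \in [0,1]$, using the already-established bounds at $\Re s = 0$ and $\Re s = 1$; the one-dimensional setting and the global control $A/2 \le |T'| \le 2A$ ensure that the resulting constants depend only on $\tau$ and $p$.
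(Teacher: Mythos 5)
First, a point of comparison: the paper does not prove this lemma at all --- it is quoted from Thomine \cite[Lemma~3.3]{Thomine} and used as a black box --- so there is no internal argument to measure yours against. Your endpoint computations at $\tau=0$ and $\tau=1$ are correct, and you correctly diagnose the central difficulty: complex interpolation of the operator $\mathcal{T}\varphi=\varphi\circ T$ between $L^p$ and $\HH^{1,p}$ only yields an operator-norm bound of the form $C\bigl(\max(A^{1-1/p},A^{-1/p})\bigr)^{\tau}(A^{-1/p})^{1-\tau}\|\varphi\|_{\HH^{\tau,p}}$, which for $A<1$ collapses to $CA^{-1/p}\|\varphi\|_{\HH^{\tau,p}}$ and loses the factor $A^{\tau}$. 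In the intended application $A$ is comparable to $1/\inf|(f^n_t)'|$ and tends to $0$, and that lost factor is precisely the $\lambda_n(f_t)^{-\tau}$ driving the essential spectral radius bound \eqref{thom} and the Lasota--Yorke estimate \eqref{eq:lasota_yorke}; the two-term structure is therefore essential and cannot be recovered from a single-norm bound.

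The gap is that your final paragraph retreats to exactly the interpolation you have just ruled out. Stein interpolation of the analytic family $s\mapsto(I-\Delta)^{-s/2}\mathcal{T}(I-\Delta)^{s/2}$ on $L^p$, ``using the already-established bounds at $\Re s=0$ and $\Re s=1$,'' \emph{is} complex interpolation of $\mathcal{T}$ between $\HH^{0,p}$ and $\HH^{1,p}$: the only available input at $\Re s=1$ is the operator norm of $\mathcal{T}$ on $\HH^{1,p}$, namely $C(A^{1-1/p}+A^{-1/p})$, and the output is again the single-space bound with the wrong constant. The route that works is the one you mention and then set aside as ``the hard part'': stay in the homogeneous scale, where your two endpoint estimates $\|\varphi\circ T\|_{L^p}\le CA^{-1/p}\|\varphi\|_{L^p}$ and $\|(\varphi\circ T)'\|_{L^p}\le CA^{1-1/p}\|\varphi'\|_{L^p}$ are genuinely single-term. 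Concretely, write $T=A\tilde T$ with $|\tilde T'|\in[\tfrac12,2]$, so that $\varphi\circ T=(\varphi(A\cdot))\circ\tilde T$; handle $\tilde T$ by the uniformly bilipschitz case ($A=1$, a standard diffeomorphism-invariance statement for $\HH^{\tau,p}$, $0\le\tau\le1$, $1<p<\infty$), and use the exact dilation identity $\bigl\||D|^\tau(\varphi(A\cdot))\bigr\|_{L^p}=A^{\tau-1/p}\bigl\||D|^\tau\varphi\bigr\|_{L^p}$ for the Fourier multiplier $|D|^\tau$ with symbol $|\xi|^\tau$, together with $\|\varphi(A\cdot)\|_{L^p}=A^{-1/p}\|\varphi\|_{L^p}$. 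Since $\|\cdot\|_{L^p}+\||D|^\tau\cdot\|_{L^p}$ is an equivalent norm on $\HH^{\tau,p}$ for $0<\tau$, $1<p<\infty$, this produces exactly the claimed two-term bound (and is the same scaling mechanism as \eqref{eq:sobolev_scaling}). As written, however, the decisive step of your argument is either circular or left unproved.
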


\begin{lemma}[{Strichartz \cite[Corollary~I.4.2]{Strichartz}}] \label{lem:Strichartz}
  Suppose that $0 \le  \tau < \frac{1}{p}<1$. Then there is a constant $C =
  C(\tau,p)$ such that 
  $
  \| 1_J \varphi \|_{\HH^{\tau,p}} \leq C \| \varphi
  \|_{\HH^{\tau,p}}
  $ for any interval $J$.
\end{lemma}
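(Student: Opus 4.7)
My plan is to reduce to the Heaviside case and then invoke an intrinsic difference characterization of $\HH^{\tau,p}$ combined with a fractional Hardy-type inequality, which is where the assumption $\tau p < 1$ enters decisively.

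\emph{Step 1 (reduction).} The case $\tau = 0$ is trivial since $\HH^{0,p} = L^p$ and $|1_J \varphi| \le |\varphi|$ pointwise, so assume $0 < \tau < \tfrac{1}{p}$. Writing any interval as $J = [a,b)$ (up to measure zero), we have $1_J = 1_{[a,\infty)} - 1_{[b,\infty)}$, so by the triangle inequality it suffices to bound multiplication by a single half-line indicator, and by translation invariance of the $\HH^{\tau,p}$ norm I may as well take the Heaviside function $H := 1_{[0,\infty)}$.

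\emph{Step 2 (intrinsic characterization).} For $0 < \tau < 1$ and $1 < p < \infty$, the Bessel potential norm $\| \cdot \|_{\HH^{\tau,p}}$ is equivalent to
\[
\| \varphi \|_{L^p} + \Bigl\| \Bigl( \int_0^\infty \frac{|\varphi(\cdot + h) - \varphi(\cdot)|^2}{h^{1 + 2\tau}} \, \D h \Bigr)^{1/2} \Bigr\|_{L^p}
\]
(a Triebel--Lizorkin square-function characterization; any equivalent intrinsic formulation using local averages of first differences would serve equally well). The $L^p$ contribution to $\| H \varphi \|_{\HH^{\tau,p}}$ is trivial since $\| H \varphi \|_{L^p} \le \|\varphi\|_{L^p}$, so attention focuses on the square-function term.

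\emph{Step 3 (splitting the difference).} For the difference $(H\varphi)(x+h) - (H\varphi)(x)$ with $h > 0$, I split according to the sign of $x$ and $x+h$. If $x \ge 0$, both terms carry $H$ and the difference equals $\varphi(x+h) - \varphi(x)$, contributing a quantity dominated by the corresponding integrand for $\varphi$. If $x + h < 0$ (so $x < 0$ too), the difference vanishes. The only delicate case is $x < 0 \le x + h$, where $(H\varphi)(x+h) - (H\varphi)(x) = \varphi(x+h)$. Substituting, the ``bad'' portion of the square function at the point $x < 0$ is
\[
\Bigl( \int_{-x}^\infty \frac{|\varphi(x+h)|^2}{h^{1+2\tau}} \, \D h \Bigr)^{1/2} \le C \cdot |x|^{-\tau} \cdot \bigl( \text{local average of } |\varphi| \text{ near } 0\bigr),
\]
after crude pointwise estimates (e.g.\ bounding $|\varphi(x+h)|$ by the Hardy--Littlewood maximal function of $\varphi$ at $0$, or more carefully splitting the integral in dyadic shells $h \in [2^k|x|, 2^{k+1}|x|]$).

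\emph{Step 4 (fractional Hardy inequality --- the main obstacle).} The task reduces to showing
\[
\Bigl( \int_{-\infty}^0 |x|^{-\tau p} \bigl(M_\tau \varphi(x)\bigr)^p \, \D x \Bigr)^{1/p} \le C \, \|\varphi\|_{\HH^{\tau,p}},
\]
where $M_\tau \varphi$ is the relevant averaged (fractional) maximal object from Step 3. This is a weighted inequality of Hardy type on the half-line, valid precisely when $\tau p < 1$: the weight $|x|^{-\tau p}$ is then locally integrable at the origin, and standard Muckenhoupt/Hardy arguments (or Strichartz's original computation) yield the bound against $\|\varphi\|_{L^p} + \|\nabla^\tau \varphi\|_{L^p}$, which is equivalent to the Bessel norm. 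The sharpness of the threshold reflects the well-known fact that $H$ itself lies in $\HH^{\tau,p}_{\mathrm{loc}}$ if and only if $\tau < 1/p$, so the condition in the lemma cannot be relaxed. Absorbing the ``good'' same-sign contributions against $\|\varphi\|_{\HH^{\tau,p}}$ and combining with the Hardy estimate for the boundary contribution yields $\|H\varphi\|_{\HH^{\tau,p}} \le C\|\varphi\|_{\HH^{\tau,p}}$, which, via Step 1, proves the lemma.
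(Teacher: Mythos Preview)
The paper does not prove this lemma at all: it is quoted as a known result from Strichartz \cite[Corollary~I.4.2]{Strichartz} and used as a black box. So there is no ``paper's proof'' to compare against, and your task was really to supply a self-contained argument.

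Your overall strategy (reduce to the Heaviside multiplier, use a first-difference square-function characterisation of $\HH^{\tau,p}$, and control the boundary contribution via a fractional Hardy inequality requiring $\tau p<1$) is the right one and is essentially how this result is proved in the literature. However, Step~3 as written does not work. You claim that for $x<0$ the bad piece
\[
\Bigl(\int_{|x|}^\infty \frac{|\varphi(x+h)|^2}{h^{1+2\tau}}\,\D h\Bigr)^{1/2}
\]
is bounded by $C\,|x|^{-\tau}$ times a ``local average of $|\varphi|$ near $0$''. If that average is a pointwise quantity like $M\varphi(0)$ or $(M|\varphi|^2(0))^{1/2}$, it is \emph{not} controlled by $\|\varphi\|_{\HH^{\tau,p}}$ in the regime $\tau<1/p$ (test against rescaled bumps $\varphi_n(y)=n^{1/p}\psi(ny)$: the maximal value at $0$ blows up faster than the Sobolev norm). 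Your Step~4 then refers to an $x$-dependent object $M_\tau\varphi(x)$, which is inconsistent with what Step~3 actually produced.

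The standard fix is an add-and-subtract: write $\varphi(x+h)=[\varphi(x+h)-\varphi(x)]+\varphi(x)$. The first piece is absorbed into the full square function for $\varphi$; the second gives
\[
|\varphi(x)|^2\int_{|x|}^\infty h^{-1-2\tau}\,\D h = C\,|x|^{-2\tau}|\varphi(x)|^2,
\]
so the boundary term is $\bigl\| |x|^{-\tau}\varphi(x)\bigr\|_{L^p}$. \emph{This} is the quantity to which the fractional Hardy inequality applies (and it holds precisely when $\tau p<1$), yielding the bound by $C\|\varphi\|_{\HH^{\tau,p}}$. With this correction to Step~3, your Steps~1, 2 and~4 go through and the argument is complete.
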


\begin{lemma}[See e.g.\  {\cite[Lemma~2.39]{BaladiZeta}}] \label{lem:differenceandcomposition}
  For any $p>1$ and any $C^1$
  map $T
  \colon \bR \to \bR$ there exists $C=C(p, \|T\|_{C^1})$
  such that  
  $
  \| \varphi - \varphi \circ T \|_{\HH^{\tilde{\tau},p}} \leq C \|
  I - T \|_{C^1}^{\tau - \tilde{\tau}} \| \varphi
  \|_{\HH^{\tau,p}}$ for all $0<\tilde{\tau} < \tau<1$ 
and all $\varphi$.
\end{lemma}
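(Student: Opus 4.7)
\medskip

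The plan is to derive the stated bound by interpolation between two endpoint estimates on the linear operator $D_T \colon \varphi \mapsto \varphi - \varphi \circ T$, namely a crude $\HH^{\tau,p}$ bound (free in $\|I-T\|$) and a sharper $L^p$ bound (gaining a power of $\|I-T\|_\infty$).

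\textbf{Endpoint at the top regularity.} First I would observe that for $T$ with $\|T\|_{C^1}$ bounded and $T'$ bounded away from zero (the latter is automatic for small perturbations of $\Id$; in general the constant absorbs it), an $\bR$-analogue of Lemma~\ref{lem:compositionbysmooth} gives $\|\varphi \circ T\|_{\HH^{\tau,p}} \leq C \|\varphi\|_{\HH^{\tau,p}}$ with $C = C(p,\|T\|_{C^1})$ (note $0<\tau<1$ suffices here, no need for $\tau<1/p$). Combined with the trivial bound $\|\varphi\|_{\HH^{\tau,p}} \leq \|\varphi\|_{\HH^{\tau,p}}$ we obtain
\[
\|D_T \varphi\|_{\HH^{\tau,p}} \leq C \|\varphi\|_{\HH^{\tau,p}}.
\]

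\textbf{Endpoint at the bottom regularity.} Next I would establish
\[
\|D_T \varphi\|_{L^p} \leq C \|I-T\|_\infty^{\tau} \|\varphi\|_{\HH^{\tau,p}}.
\]
Because $0<\tau<1$ and $p>1$, the standard pointwise control for Bessel potentials yields
\[
|\varphi(x+h) - \varphi(x)| \leq C |h|^{\tau} \bigl( M_\tau \varphi (x+h) + M_\tau \varphi(x)\bigr),
\]
for some fractional sharp maximal function $M_\tau$ satisfying $\|M_\tau \varphi\|_{L^p} \leq C \|\varphi\|_{\HH^{\tau,p}}$ (this is essentially the DeVore--Sharpley characterization of Bessel potential spaces, or equivalently an embedding into a Campanato-type space). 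Applying this pointwise with $h = T(x) - x$, using $|h| \leq \|I-T\|_\infty$, and changing variables $y = T(x)$ (whose Jacobian is controlled by $\|T\|_{C^1}$) to integrate the second maximal term, gives the claimed $L^p$ bound.

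\textbf{Complex interpolation.} The Bessel potential spaces form a complex interpolation scale: $[L^p, \HH^{\tau,p}]_\theta = \HH^{\theta \tau, p}$. Setting $\theta := \tilde\tau/\tau \in (0,1)$, the Riesz--Thorin--Calder\'on theorem applied to the bounded linear operator $D_T$ yields
\[
\|D_T \varphi\|_{\HH^{\tilde\tau,p}} \leq (C \|I-T\|_\infty^{\tau})^{1-\theta} \cdot C^{\theta} \cdot \|\varphi\|_{\HH^{\tau,p}} = C \|I-T\|_\infty^{\tau - \tilde\tau} \|\varphi\|_{\HH^{\tau,p}},
\]
which is even a bit stronger than the stated bound since $\|I - T\|_\infty \leq \|I-T\|_{C^1}$.

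\textbf{Main obstacle.} The crux is the endpoint $L^p$ bound: one must justify that $\HH^{\tau,p}$ controls $L^p$ moduli of continuity of order $\tau$ via a maximal-function estimate that is stable under composing the shift by the nonlinear map $T$. This is classical for translations, but the presence of a variable shift $h(x)=T(x)-x$ requires either the sharp maximal-function route sketched above or an alternative reduction to translates via a change of variable plus a Littlewood--Paley truncation argument (splitting $\varphi$ into low and high frequencies, bounding the low frequencies by $\|\nabla \varphi^{\mathrm{low}}\|_\infty \|I-T\|_\infty$ and the high frequencies directly in $L^p$, then optimizing the truncation scale in terms of $\|I-T\|_\infty$). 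Either route gives the stated estimate; the complex-interpolation step and the top-regularity bound are routine given the tools already invoked in Section~\ref{theend}.
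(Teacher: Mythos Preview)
The paper does not prove this lemma; it cites \cite[Lemma~2.39]{BaladiZeta}. Your strategy --- a trivial $\HH^{\tau,p}\to\HH^{\tau,p}$ bound for $D_T=\varphi\mapsto\varphi-\varphi\circ T$, an $\HH^{\tau,p}\to L^p$ bound gaining $\|I-T\|_\infty^\tau$, then complex interpolation at $\theta=\tilde\tau/\tau$ via $\HH^{\tilde\tau,p}=[L^p,\HH^{\tau,p}]_{\tilde\tau/\tau}$ --- is the standard argument for this type of estimate and is fully consistent with the interpolation machinery the paper itself deploys (compare the end of the proof of Lemma~\ref{lem:Lt-L0}). Both variants you offer for the $L^p$ endpoint (sharp maximal function \`a la DeVore--Sharpley, or a Littlewood--Paley truncation at frequency $\sim\|I-T\|_\infty^{-1}$ followed by optimization) are valid; the latter is the route taken in the cited reference. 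Your caveat that the top-endpoint bound via Lemma~\ref{lem:compositionbysmooth} requires $|T'|$ bounded below is correct and worth flagging: the lemma as literally stated for arbitrary $C^1$ maps is slightly too loose, but in the paper it is only applied to $T=f_{t,-}^{-1}\circ f_{0,-}$, a diffeomorphism close to the identity, so your proof covers the relevant case.
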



\begin{proof}[Proof of  Lemma~\ref{lem:Lt-L0}]
Let $I_t =
f_t(I)$, and put $J_t = I_t \setminus (I_0\cap I_t)$ and $K_t = I_0 \setminus(I_0\cap  I_t)$. (It
is possible that $J_t$ or $K_t$ is empty.)  We have
\[
\LL_t \varphi - \LL_0 \varphi = 1_{I_0 \cap I_t}
(\LL_t \varphi - \LL_0 \varphi) + 1_{J_t} \LL_t \varphi
- 1_{K_t} \LL_0 \varphi,
\]
since $\LL_t \varphi$ is supported in $I_t = J_t \cup (I_0 \cap
I_t)$ and $\LL_0 \varphi$ is supported in $I_0 = K_t \cup (I_0
\cap I_t)$. Hence
\begin{equation} \label{eq:easyandtrickypart}
  \| \LL_t \varphi - \LL_0 \varphi
  \|_{\HH^{\tilde{\tau}, p}} \leq \| 1_{I_0 \cap I_t}
  (\LL_t \varphi - \LL_0 \varphi) \|_{\HH^{\tilde{\tau},
      p}} + \| 1_{J_t} \LL_t \varphi \|_{\HH^{\tilde{\tau},
      p}} + \| 1_{K_t} \LL_0 \varphi \|_{\HH^{\tilde{\tau},
      p}} \, .
\end{equation}

We consider first the term $\| 1_{I_0 \cap I_t} (\LL_t
\varphi - \LL_0 \varphi) \|_{\HH^{\tilde{\tau}, p}}$.
If $x \in I_0 \cap I_t$, both $f_{t,\pm}^{-1} (x)$ and
$f_{0,\pm}^{-1} (x)$ are defined, and we may write
\begin{align*}
 & \| 1_{I_0 \cap I_t} (\LL_t \varphi - \LL_0 \varphi)
  \|_{\HH^{\tilde{\tau}, p}} \\ 
&\quad\leq \| 1_{I_0 \cap I_t} (
   \frac{ \varphi}{|f'_t|} \circ f_{t,-}^{-1} 
-  \frac{\varphi}{|f'_0|} \circ f_{0,-}^{-1} )
  \|_{\HH^{\tilde{\tau}, p}} 
+ \| 1_{I_0 \cap I_t} ( 
  \frac{\varphi}{|f'_t|} \circ f_{t,+}^{-1} 
-  \frac{\varphi}{|f'_0|} \circ f_{0,+}^{-1}
 )
  \|_{\HH^{\tilde{\tau}, p}} \, .
\end{align*}
We 
consider the first term on the right-hand side (the second one is
 treated in the same fashion).
Since $(f_t)$ is a $C^{2}$ perturbation, we may
extend $f_{t,-} \colon [-1,c] \to I_t$ to a
$C^2$-diffeomorphism of $\bR$,
still denoted $f_{t,-} \colon \bR \to \bR$, such that $ \inf_{|t|
  \le \ve} |f_{t,-}'| > 1$, while $f_{t,-}''$ has compact support and is bounded
uniformly in $t$, and $\Vert f_{t,-} - f_{0,-} \Vert_{C^1} = O(|t|)$
 as
$|t| \to 0$. 
Since $0<\tilde \tau <1/p$, by the Strichartz Lemma~\ref{lem:Strichartz},
\[
\| 1_{I_0 \cap I_t}   (\frac{\varphi}{|f'_{t,-}|} \circ f_{t,-}^{-1} - 
 \frac{\varphi}{|f'_{0,-}|} \circ f_{0,-}^{-1}) \|_{\HH^{\tilde\tau, p}} 
\leq C \|  \frac{\varphi}{|f'_{t,-}|} \circ f_{t,-}^{-1} - 
 \frac{\varphi}{|f'_{0,-}|} \circ f_{0,-}^{-1}
\|_{\HH^{\tilde\tau, p}}\, ,
\]
where $\varphi$ is
extended by zero outside of its domain $I$. We then split
\begin{align} \label{eq:easypart}
   \|  \frac{\varphi}{|f'_{t,-}|} \circ f_{t,-}^{-1} - 
 \frac{\varphi}{|f'_{0,-}|} \circ f_{0,-}^{-1}
\|_{\HH^{\tilde\tau, p}}
  &
  \leq
 \|  \frac{\varphi}{|f'_{t,-}|} \circ f_{t,-}^{-1} - 
 \frac{\varphi}{|f'_{t,-}|} \circ f_{0,-}^{-1}
\|_{\HH^{\tilde\tau, p}}\\
\nonumber &\qquad+
 \|  (\frac{1}{|f'_{t,-}|} \circ f_{0,-}^{-1} - 
 \frac{1}{|f'_{0,-}|}\circ f_{0,-}^{-1}) \cdot  (\varphi \circ f_{0,-}^{-1})
\|_{\HH^{\tilde\tau, p}}
 \, .
\end{align}
The first term in \eqref{eq:easypart} is estimated using
Lemmas~\ref{lem:multiplicationbysmooth},
\ref{lem:compositionbysmooth}, and~\ref{lem:differenceandcomposition}:
\begin{align*}
 & \|  \frac{\varphi}{|f'_{t,-}|} \circ f_{t,-}^{-1} - 
 \frac{\varphi}{|f'_{t,-}|} \circ f_{0,-}^{-1}
\|_{\HH^{\tilde\tau, p}}=
\|(\frac{\varphi}{|f'_{t,-}|} \circ f_{t,-}^{-1}\circ f_{0,-} - 
 \frac{\varphi}{|f'_{t,-}|} )\circ f_{0,-}^{-1}\|_{\HH^{\tilde\tau, p}}
  \\  &\qquad 
\le C \|\frac{\varphi}{|f'_{t,-}|} \circ f_{t,-}^{-1}\circ f_{0,-} - 
 \frac{\varphi}{|f'_{t,-}|} \|_{\HH^{\tilde\tau, p}}
\le C \| I - f_{t,-}^{-1}
  \circ f_{0,-} \|_{C^1}^{\tau - \tilde{\tau}}
 \|  \frac{\varphi}{|f'_{t,-}|}
  \|_{\HH^{\tau, p}}
\\
  &\qquad \leq C \| I - f_{t,-}^{-1}
  \circ f_{0,-} \|_{C^1}^{\tau - \tilde{\tau}} \| \varphi
  \|_{\HH^{\tau, p}}\, ,
\end{align*}
if $0 < \tilde{\tau} < \tau<1$. 
It is easy to check that  $\Vert f_{t,-} - f_{0,-} \Vert_{C^1} = O(|t|)$
implies that
$\| I - f_{t,-}^{-1}
  \circ f_{0,-} \|_{C^1}=
O(|t|)$. 

Note that $ \| \frac{1}{|f'_{t,-}|} \circ f_{0,-}^{-1} - 
 \frac{1}{|f'_{0,-}|}\circ f_{0,-}^{-1}  \|_{C^1} = O(|t|)$ as $t \to
0$. Since $0\le \tilde \tau <\tau <1$, for the second term in \eqref{eq:easypart} we then have by
Lemmas~\ref{lem:multiplicationbysmooth} and
\ref{lem:compositionbysmooth}  the upper bound
\begin{align*}
  \|(\frac{1}{|f'_{t,-}|} \circ f_{0,-}^{-1} - 
 \frac{1}{|f'_{0,-}|}\circ f_{0,-}^{-1}) 
\cdot (\varphi \circ f_{0,-}^{-1} )
  \|_{\HH^{\tilde{\tau}, p}} 
  & \leq C |t| \|
  \varphi \|_{\HH^{\tilde{\tau}, p}} \leq C |t| \| \varphi
  \|_{\HH^{\tau, p}}\, .
\end{align*}

We have shown that if $0 < \tilde{\tau} < \tau < \frac{1}{p}$ then
\[
\| 1_{I_0 \cap I_t} (\LL_t \varphi - \LL_0 \varphi)
\|_{\HH^{\tilde{\tau}, p}} \leq C |t|^{\tau - \tilde{\tau}} \|
\varphi \|_{\HH^{\tau, p}}\, .
\]

We return to \eqref{eq:easyandtrickypart} and consider $\|
1_{J_t} \LL_t \varphi \|_{\HH^{\tilde{\tau}, p}}$. 

If $q \in (p,\infty)$, letting $r$ be the 
conjugate of
$q/p$, then 
H\"{o}lder's inequality gives for any interval $J\subset I$
and any $u \in L^q(I)$ that
\[
\| 1_J u \|_{L^{p}(I)} = \| 1_J u^{p}
\|_{L^1((I)}^\frac{1}{p} \leq \bigl( \| 1_J \|_{L^r(I)}
\| u^{p} \|_{L^{q/p}(I)} \bigr)^\frac{1}{p} 
=|J|^\frac{1}{r p} \| u \|_{L^q(I)}
= |J|^{\frac{1}{p} - \frac{1}{q}} \| u \|_{L^q(I)}\, .
\]
Since $|J_t|\le C |t|$, it follows by the Sobolev embedding theorem
that,  taking $q>p$ such that $\tau=\frac{1}{p} -
\frac{1}{q}$,
\begin{align}\label{eq:embed_L_t}
  \| 1_{J_t} \LL_t \varphi \|_{L^{p}} \le |t|^{\frac1p -
    \frac1q} \Vert \LL_t \varphi \Vert_{L^q} \le C |t|^{\tau} \Vert \LL_t \varphi \Vert_{\HH^{\tau,p}}\le C |t|^{\tau} \Vert  \varphi \Vert_{\HH^{\tau,p}}
\, .
\end{align}
Also, \eqref{unifop} and Lemma~\ref{lem:Strichartz} give, for $\tau < 1/p$,
\begin{align}\label{unifop'}
  \| 1_{J_t} \LL_t \varphi \|_{{\HH^{\tau,p}}} \le  C  \Vert \varphi \Vert_{\HH^{\tau,p}}
\, .
\end{align}
Finally, since $\HH^{\tilde \tau,p}=[L^p, \HH^{\tau,p}]_{\tilde\tau/\tau}$
(where $[\BB_0,\BB_1]_\theta$ denotes complex interpolation) interpolating at $\theta=\tilde \tau/\tau$
(see \cite[\S 2.5.2]{RS} and \cite[\S 1.9]{TrFA}) between \eqref{eq:embed_L_t} and \eqref{unifop'}, we get
\begin{align*}
  \Vert 1_{J_t} \LL_t \varphi \Vert_{\HH^{\tilde\tau,p }} &\le 
 C
   |t|^{\tau (1-\theta)}
  \Vert \varphi \Vert_{\HH^{\tau,p }}=C
   |t|^{\tau - \tilde\tau}
  \Vert \varphi \Vert_{\HH^{\tau,p }} \, .
\end{align*}

In the same way, since $|K_t|\le C|t|$, we get 
$
\| 1_{K_t} \LL_0 \varphi \|_{\HH^{\tilde{\tau},p}} \leq C
|t|^{\tau - \tilde{\tau}} \| \varphi \|_{\HH^{\tau, p}}$.
Recalling \eqref{eq:easyandtrickypart}, we have
proved  
$
\| \LL_t \varphi - \LL_0 \varphi
\|_{\HH^{\tilde{\tau},p}} \leq C |t|^{\tau -
  \tilde{\tau}} \| \varphi \|_{\HH^{\tau,p}}$
for $0 < \tilde{\tau} < \tau < \frac{1}{p}$.
\end{proof}


\section{Proofs of Theorems~\ref{thm:susc_holom}, \ref{newth} and Corollaries~\ref{lecor}, \ref{cor:realanalytic}}
\label{pr}

\subsection{Proof of Theorem~\ref{thm:susc_holom} on the fractional susceptibility function}
Clearly,
\begin{equation} \label{b0}
   \int_I (\LL_t - \LL_0) \varphi(x) \, \D
     x = 0\, , \,\,\,
\forall  \varphi \in L^1\, .
     \end{equation}

Since $I$ is compact, we may assume  that $q\ne \infty$ (otherwise replace $q$ 
by any finite number larger than $(1-\eta)^{-1}$).  It suffices to consider the
contribution of the improper integral
$\int_0^\infty$ in $\Psi_\phi(\eta,z)$, the computation for the
integral $\int_{-\infty}^0$ is exactly the same.
We will prove claim \eqref{main} for $\Psi_\phi(\eta, z)$, the proof
for $\Psi^\mathrm{fr}_\phi(\eta, z)$ is obtained by a slight simplification.
  Let $q'> 1$ be the conjugate of $q$, that is
 $  \frac1q + \frac{1}{q'} = 1$.
  Then,  by H{\"o}lder's inequality,
  \[
  \int_I \biggl| \phi \cdot \LL_t^k (\LL_t -
    \LL_0)\rho_0  \biggr|
  \, \D x \leq  \| \LL_t^k
  (\LL_t - \LL_0) \rho_0 \|_{L^{q'}} \| \phi
  \|_{L^q}.
  \]
 Next, fixing $p \in (1, q')$ (we shall need to take $p$ close to $1$ 
 soon) and setting
    \begin{align}  
    \label{deftt}
      \tilde \tau :=\frac{1}{p} -\frac{1}{q'} =\frac{1}{p} -1+ \frac{1}{q} \in \bigl (0,   \frac{1}{p}-\eta \bigr  ) \, ,
    \end{align}
  the Sobolev embedding theorem
  \cite[Theorem~1.3.5]{GrafakosModern}, gives a constant $C_{p,q'}$ such
  that 
  \begin{align*}
  \int_I \biggl| \phi \cdot \LL_t^k (\LL_t -
    \LL_0)\rho_0   \biggr| \, \D x 
  &\leq  C_{p,q'} \| \LL_t^k
  (\LL_t - \LL_0) \rho_0 \|_{\HH^{\tilde{\tau},p}}
  \| \phi \|_{L^q} \, .
  \end{align*}
  Thus, for each fixed $k\ge 0$, by Lemma~\ref{lem:Lt-L0}, the improper
  integral defining the coefficient of $z^k$ in $\Psi(\eta,z)$ or $\Psi^\mathrm{fr}(\eta, z)$  
  is a well-defined complex number. 
 
  \smallskip
 
Assume from now on that $p\in (1,\min( p_0,q'))$
where $p_0$ is from Lemma~\ref{lem:lasota_yorke}, and let $\kappa$ be as in Definition~\ref{defkappa}.

Since $\Lambda^{- \tilde \tau - 1 +   \frac1p}=\Lambda^{- \frac1q}$, 
  using \eqref{b0} for $\varphi=\rho_0$, we have 
for any $\kappa_1 > \max (\kappa, \Lambda^{- \frac1q})$, by Keller and
Liverani's consequence \eqref{eq:decayestimate_t} of the uniform Lasota--Yorke and perturbation bounds,  constants $\ve_1>0$ and\footnote{As $\eta\to 1$, we have $C_\eta\le C (1-\eta)^{-1}$.}  $C_\eta<\infty$ such that
  \begin{equation}\label{decc}
  \| \LL_t^k (\LL_t - \LL_0) \rho_0
  \|_{\HH^{\tilde{\tau},p}} \leq C_\eta \kappa_1^k \|
  (\LL_t - \LL_0) \rho_0 \|_{\HH^{\tilde{\tau},p}}\, , \,
\forall |t|\le  \ve_1\, .
  \end{equation}
     
  For any $\tau\in (\tilde \tau, \frac{1}{p})$, the bound
  \eqref{the:Lt-L0} implies
  \begin{equation}\label{perturb}
    \| (\LL_t - \LL_0) \rho_0
    \|_{\HH^{\tilde{\tau},p}} \leq C |t|^{\tau - \tilde{\tau}}
    \| \rho_0 \|_{\HH^{\tau,p}} \, , \,
\forall |t|\le \ve_1\, .
  \end{equation}
  We conclude that for any $0\le |t|<\ve_1$
  \begin{equation}\label{gg}
  \int_I \biggl| \phi \cdot  \LL_t^k \frac{(\LL_t -
    \LL_0)\rho_0}{|t|^{1+\eta}}   \biggr|
  \, \D x \leq C_\eta \kappa_1^k |t|^{-1-\eta + \tau -
    \tilde{\tau}} \| \rho_0 \|_{\HH^{\tau,p}} \| \phi
  \|_{L^q} \, .
  \end{equation}
   Since $q>(1-\eta)^{-1}$, we may choose $\tau<1/p$ and $\tilde{\tau}=1/p-1+1/q$
  such that 
  $
  \eta < \tau - \tilde{\tau}$. For such
   $\tau$ and $\tilde{\tau}$, we may integrate \eqref{gg} over $t\in(0,\ve_1)$, and we get
  \begin{equation}\label{b1}
  \int_{0}^{\ve_1} \int_I \biggl|\phi(x)  \LL_t^k
  \frac{(\LL_t - \LL_0)\rho_0}{t^{1+\eta}}   \biggr| \, \D x \, \D t \leq C
  \kappa_1^k \| \rho_0 \|_{\HH^{\tau,p}} \| \phi
  \|_{L^q} \, .
  \end{equation}

  For $t > \ve_1$, we have $\LL_t = \LL_{\pm
    \ve_1}$,   so that, using again  the arguments above,
  \begin{align}
  \nonumber
     \int_{t> \ve_1} \int_I \biggl|\phi \cdot  \LL_t^k \frac{(\LL_t -
      \LL_0)\rho_0}{t^{1+\eta}} \biggr|
    \, \D x \, \D t   
    &\le \int_{t> \ve_1} \frac{1}{t^{1+\eta}} \, \D t 
    \cdot  \int_I \biggl| \phi \cdot \LL_{\pm\ve_1}^k
      (\LL_{\ve_1} - \LL_0)\rho_0 \biggr| \, \D x  \\
      \label{b2}
  &\leq C_\eta \kappa_{1}^k 
    \ve_1^{\tau - \tilde{\tau}-\eta} \| \rho_0 \|_{\HH^{\tau,p}}
    \| \phi \|_{L^q}
    \, .
    \end{align}
  By Fubini, the bounds \eqref{b1} and \eqref{b2} imply that
  $\Psi_\phi(\eta,z)$ and  $\Psi^\mathrm{fr}_\phi(\eta,z)$
are holomorphic in the disc
  of radius $\kappa_1^{-1}$, if $\phi \in L^q$ for $q >(1-\eta)^{-1}$.
Since we may take $q$ arbitrarily close to $(1-\eta)^{-1}$
and $\kappa_1$ arbitrarily close to $\min (\kappa, \Lambda^{-\frac 1 q})$, this shows 
\eqref{main}.

\smallskip

 We next show claim \eqref{eq:response} of the theorem.  The bounds \eqref{b0}, \eqref{decc},  \eqref{b1}, and \eqref{b2} 
  together with Fubini and the dominated convergence theorem imply that, as holomorphic functions
  in the 
  disc of radius $\kappa_1^{-1}> 1$,
  \begin{align*}
  \Psi_\phi(\eta,z)&= \frac{\Gamma_\eta}{2}
    \sum_{k=0}^\infty z^k \int_{-\infty}^\infty  \int_I \phi  \biggl(
    \LL_t^k \frac{(\LL_t - \LL_0)
      \rho_0}{|t|^{1+\eta}} \biggr) \sign(t)  \,
    \D x \, \D t \\ &= \frac{\Gamma_\eta}{2}
    \int_{-\infty}^{\infty}
    \int_I \phi \biggl( (I - z \LL_t)^{-1}
    \frac{(\LL_t - \LL_0) \rho_0}{|t|^{1+\eta}}
     \biggr) \sign(t)  \, \D x \, \D t \, .
  \end{align*}
  In view of  \eqref{b0}, the trivial identity
  $
    ( \LL_t - \LL_0 ) \rho_0 = ( I - \LL_t
    )(\rho_t - \rho_0) $ implies the key formula \eqref{key}.
  In particular,
  \begin{align*}
    \Psi_\phi (\eta, 1)
    &=\frac{\Gamma_\eta}{2}
    \int_{-\infty}^{\infty}\frac{ 1 }{|t|^{1+\eta}}\int_I
    \phi(x)   (\rho_0(x) - \rho_t(x)  )  \, \D x \, \sign(t)\, 
    \D t \\
    &=  M^{\eta}\left( \int_I \phi(x) \rho_t(x)
    \, \D x \right) \bigg|_{t=0} \, .
  \end{align*}
   This proves claim \eqref{eq:response}.

\medskip  
To conclude the proof of Theorem~\ref{thm:susc_holom}, it only remains to prove \eqref{eq:last}. Note that $\LL_0^k$ does not depend on $t$
and acts on functions depending on $x$. Since we have shown that every improper integral defining  $\Psi^\mathrm{fr}_\phi(\eta,z)$
is convergent, and since the sum over $k$ in \eqref{deffr} converges absolutely for $|z|\le 1$, we may
write
\begin{align*}
  \Psi^{\mathrm{fr}}_\phi(\eta,z)
&= \sum_{k=0}^\infty  \int_I \phi
z^k \LL_0^k  \biggl ( \frac{\Gamma_\eta}{2}
     \int_{-\infty}^\infty \biggl(
      \frac{(\LL_0 - \LL_t)
       \rho_0}{|t|^{1+\eta}} \biggr) \sign(t) \, \D t  \biggr )  \,
     \D x \\
&= \int_I \phi
(I-z \LL_0)^{-1}  \biggl ( \frac{\Gamma_\eta}{2}
     \int_{-\infty}^\infty \biggl(
      \frac{(\LL_0 - \LL_t)
       \rho_0}{|t|^{1+\eta}} \biggr)  \sign(t) \, \D t \biggr )  \,
     \D x  \, .
\end{align*}
Thus, \eqref{eq:last} just follows from the definition of the Marchaud derivative
$M^\eta$. \qed

\subsection{Proof of Theorem~\ref{newth} on the generalized fractional susceptibility function} \label{sec:generalizedproof}

The proof of Theorem~\ref{newth} is the same as the proof of
Theorem~\ref{thm:susc_holom} up to equation \eqref{gg}. Instead of
\eqref{gg} we have
\[
  \int_I \biggl| \phi \cdot \LL_t^k \frac{(\LL_t -
    \LL_0)\rho_0}{\ell(|t|)} \biggr| \, \D x \leq C \kappa_1^k
  \frac{|t|^{\tau - \tilde{\tau}}}{\ell (|t|)} \| \rho_0
    \|_{\HH^{\tau,p}} \| \phi \|_{L^q}\, .
\]
We choose $\tau < 1/p$ and $\tilde{\tau} = 1/p - 1 + 1/q$. Then $\tau
- \tilde{\tau} = 1 - 1/q \geq \gamma$ and
\[
\int_{0}^{\varepsilon_1} \frac{t^{\tau -
    \tilde{\tau}}}{\ell (t)} \, \D t \leq
\int_{0}^{\varepsilon_1} \frac{t^{\gamma}}{\ell (t)}
\, \D t < \infty\, , 
\]
by the assumption on $\gamma$. Hence we get, similar to \eqref{b1},
that
\[
\int_{0}^{\ve_1} \int_I \biggl|\phi(x) \LL_t^k \frac{(\LL_t -
  \LL_0)\rho_0}{\ell(t)} \biggr| \, \D x \, \D t \leq C
\kappa_1^k \| \rho_0 \|_{\HH^{\tau,p}} \| \phi \|_{L^q} \, .
\]

Similar to \eqref{b2}, we have
\begin{align*}
  \int_{t> \ve_1} \int_I \biggl|\phi \cdot \LL_t^k \frac{(\LL_t -
    \LL_0)\rho_0}{\ell(t)} \biggr| \, \D x \, \D t &\le \int_{t> \ve_1}
  \frac{1}{\ell(t)} \, \D t \cdot  \int_I \biggl| \phi
  \cdot \LL_{\ve_1}^k (\LL_{ \ve_1} - \LL_0)\rho_0 \biggr| \, \D
  x \\ &\leq C \kappa_{1}^k  \| \rho_0
  \|_{\HH^{\tau,p}} \| \phi \|_{L^q} \, ,
\end{align*}
where we used the assumption $\int_0^1 1/ \ell(t) \, \D t < \infty$.
We may choose $\kappa_1$ arbitrary close to $\min (\kappa,
\Lambda^{-\frac{1}{q}})$, which proves (\ref{maingeneralized}).

Claims \eqref{responsegeneralized} and \eqref{lastgeneralized} are
proved just as the corresponding claims in
Theorem~\ref{thm:susc_holom},  replacing $|t|^{1+ \eta}$ by
$\ell(t)$ in the proofs.

\subsection{Proof of Corollary~\ref{cor:realanalytic} on holomorphic extensions} \label{sec:realanalyticproof}

\begin{remark}[Sketch of an alternative proof using Morera's theorem]
The proof below is by estimating the growth of derivatives.  We sketch here a more conceptual proof: 
First extend the definition of $M^{\eta}$ and
Theorem~\ref{thm:susc_holom} to complex $\eta$ with $0<\Re \eta < 1 - q^{-1}$. 
This implies in particular that  the double integrals appearing in $\Psi_\phi(\eta,z)$
are well-defined for such $\eta$, and Fubini is justified.
Then, since Euler's Gamma function and $t^{-1-\eta}$ are holomorphic in the domain
considered, Morera's theorem gives the desired holomorphic extension.
\end{remark}

We first show that the function 
$$\eta \mapsto M^\eta \bigl( \int_I \phi (x)
\rho_t (x) \, \D t \bigr) \big|_{t=0}$$ 
is holomorphic in the
strip $0 < \Re \eta < 1 - \frac{1}{q}$. 
We give the argument for the right-sided derivative, the left-sided case is the same
up to introducing signs.
Let $0 < \gamma < 1
- \frac{1}{q}$ 
and take $\eta_0$ such that $0 < \Re \eta_0 <
\gamma$. The right-sided fractional derivative with the parameter $\eta = \eta_0 +
\zeta$ is given by
\begin{align*}
  F_{\eta_0} (\zeta) :=& \, M^{\eta_0 + \zeta}_- \biggl(\int_I \phi (x) \rho_t (x) \, \D t
  \biggr)\bigg|_{t=0} 
  = \Psi_\phi^- (\eta_0 + \zeta, 1) \\ 
  =& \,
  \sum_{k=0}^\infty \frac{\Gamma_{\eta_0+\zeta}}{2} \int_I \phi
  \int_{0}^\infty \biggl( \LL_t^k \frac{(\LL_t - \LL_0)
    \rho_0}{t^{1 + \eta_0 + \zeta}} \biggr)  \, \D
  t \, \D x \, .
\end{align*}
Let $\ell_n (t) = |t|^{1 + \eta_0} (\log |t|)^{-n}$. Using
Theorem~\ref{newth} and  $|t|^{-\zeta} =
\sum_{n=0}^\infty \frac{1}{n!} (\log |t|)^n \zeta^n$, we then have
\begin{align*}
  F_{\eta_0} (\zeta) &= \sum_{k=0}^\infty
  \frac{\Gamma_{\eta_0+\zeta}}{2} \int_I \phi 
  \int_{0}^\infty
  \biggl( \LL_t^k (\LL_t - \LL_0) \rho_0
   \sum_{n=0}^\infty \frac{(\log  t)^n}{t^{1 + \eta_0}} 
   \frac{\zeta^n}{n!} \biggr)  \,
  \D t \, \D x \\ 
  &= \frac{\Gamma_{\eta_0+\zeta}}{2}
  \sum_{k=0}^\infty \sum_{n=0}^\infty \int_I \phi
  \int_{0}^\infty \biggl( \LL_t^k (\LL_t - \LL_0) \rho_0
  \frac{(\log t)^n}{t^{1 + \eta_0}} \frac{\zeta^n}{n!} \biggr)
  \, \D t \, \D x \\ &=
  \frac{\Gamma_{\eta_0+\zeta}}{2} \sum_{n=0}^\infty \frac{\Psi_\phi^-
    ((\ell_n), 1)}{n!}  \zeta^n\, .
\end{align*}
To justify exchanging the  order of sums and integrations above, we will
estimate below
\begin{equation}\label{estim}
\frac{\zeta^n}{n!} \int_I \phi \int_{-\infty}^\infty \biggl( \LL_t^k (\LL_t -
\LL_0) \rho_0 \frac{(\log |t|)^n}{|t|^{1 + \eta_0}} \biggr) \sign(t)
\, \D t \, \D x \, .
\end{equation}
Since $F_{\eta_0} (\zeta)$ is a product of $\frac{\Gamma_{\eta_0+\zeta}}{2}$
and $\sum_{n=0}^\infty \frac{\Psi_\phi ((\ell_n), 1)}{n!} \zeta^n$, 
which are both holomorphic in $\zeta$, provided $|\zeta|$ is
sufficiently small, this will show that $F_{\eta_0} (\zeta)$ is holomorphic.

To estimate \eqref{estim},  we first use the proof of Theorem~\ref{newth}, to get
\[
\biggl| \frac{\zeta^n}{n!} \int_I \phi \int_{-\infty}^\infty \biggl(
\LL_t^k (\LL_t - \LL_0) \rho_0 \frac{(\log |t|)^n}{|t|^{1 + \eta_0}}
\biggr) \sign(t)  \, \D t \, \D x \biggr| \leq C_n
\kappa_1^k \zeta^n \lVert \rho_0 \rVert_{\mathcal{H}^{\tau,p}} \lVert
\phi \rVert_{L^q}\, ,
\]
where
\[
C_n = \frac{C}{n!} \biggl(\int_{0}^{\varepsilon_1}
\frac{t^{\gamma}}{|\ell_n (t)|} \, \D t + 
\int_{t >
  \varepsilon_1} \frac{1}{|\ell_n (t)|} \, \D t \biggr)\, ,
\]
and $C$ is a constant that does not depend on $n$.
Next, since $\gamma > \Re \eta_0$ we have
\[
\biggl| \int_{-\varepsilon_1}^{\varepsilon_1}
\frac{|t|^{\gamma}}{\ell_n (|t|)} \, \D t \biggr| = 
2
\int_0^{\varepsilon_1} \frac{|\log t|^n}
{t^{1 - (\gamma - \Re
    \eta_0)}} \, \D t \leq 2 \int_0^1 
    \frac{|\log t|^n}{t^{1 -
    (\gamma - \Re \eta_0)}} \, \D t = 2 \frac{n!}{(\gamma
  - \Re \eta_0)^{n+1}}
\]
and
\begin{align*}
\biggl| \int_{|t| > \varepsilon_1} \frac{1}{\ell_n (|t|)} \,
\D t \biggr| &\leq 2 |\varepsilon_1|^{1 + \Re \eta_0}
|\log \varepsilon_1|^n + 2 \int_{1}^\infty \frac{(\log t)^n}{t^{1 +
    \Re \eta_0}} \, \D t \\ &
    = 2 |\varepsilon_1|^{1 +
  \Re \eta_0} |\log \varepsilon_1|^n + 2 \frac{n!}{(\Re
  \eta_0)^{n+1}}\, .
\end{align*}
In conclusion, $C_n$ grows at most with an exponential speed. Hence,
we may change order of integration if $\zeta$ is in a sufficiently
small disc. In particular, our estimates show that $\frac{\Psi_\phi
  ((\ell_n), 1)}{n!}$ grows at most exponentially in $n$, so that
\[
  F_{\eta_0} (\zeta) = \frac{\Gamma_{\eta_0+\zeta}}{2}
  \sum_{n=0}^\infty \frac{\Psi_\phi ((\ell_n), 1)}{n!}  \zeta^n\, ,
\]
holds when $\zeta$ is in a disc around the origin.

The proof that the function $(\eta, \zeta) \mapsto \Psi_\phi (\eta,
z)$ is holomorphic in the domain 
$$\{\, (\eta, z) \mid 0 < \Re \eta
< 1-\frac{1}{q} ,\, \ |z| < \min (\kappa, \Lambda^{-\frac{1}{q}}) \,\}$$
 uses the same
estimates, writing instead
\begin{align*}
\Psi_\phi (\eta_0 + \zeta, z) &= \sum_{k=0}^\infty
\frac{\Gamma_{\eta_0+\zeta}}{2} z^k \int_I \phi \int_{-\infty}^\infty
\biggl( \LL_t^k \frac{(\LL_t - \LL_0) \rho_0}{|t|^{1 + \eta_0 +
    \zeta}} \biggr) \sign(t) \, \D t \, \D x
\\ &= \sum_{k=0}^\infty \frac{\Gamma_{\eta_0+\zeta}}{2} z^k \int_I \phi
\int_{-\infty}^\infty \biggl( \LL_t^k (\LL_t - \LL_0) \rho_0
\sum_{n=0}^\infty \frac{(\log |t|)^n}{|t|^{1 + \eta_0}}
\frac{\zeta^n}{n!} \biggr) \sign(t)  \, \D t \, \D x \, .
\end{align*}

\subsection{Proof of Corollary~\ref{lecor} on horizontal perturbations}
\label{proofcor}

By \cite[Theorem~2.8]{BS} (see \cite{BS09}), the horizontality assumption
implies that $f_t$ is tangential to the topological class of $f_0$ that is,
there exist a $C^2$ perturbation $(\tilde f_t)$ of $f_0$ with $|\tilde
f_t-f_t|=O(t^2)$ (as $|t|\to 0$) and homeomorphisms $h_t$ of $I$ with
$h_t(c)=c$ and $\tilde f_t = h_t \circ f \circ h_t^{-1}$.  Note  that, letting $\tilde \LL_t$ be
the operator associated to $\tilde f_t$, the proof of
Lemma~\ref{lem:Lt-L0} gives $\ve_2$ such that (cf.  \cite[(27) proof
  of Proposition~3.3]{BS}) that
\[
\|(\LL_t - \tilde \LL_t)\varphi \|_{\HH^{\tilde{\tau},p}}\le C
|t|^{2(\tau-\tilde \tau)}\|\varphi\|_{\HH^{\tau,p}} \, , \quad \forall
|t|< \ve_2 \, ,
\]
(this
will be used to show \eqref{noper}).
Since we may choose $0<\tilde \tau<\tau <1/p$ such that $2(\tau-\tilde \tau)>1$, it is not hard to see that
we may replace $f_t$ by $\tilde f_t$ in the proof of Corollary~\ref{lecor}.

The  two first equalities claimed in 
Corollary~\ref{lecor} follow from   statement (b) of
Theorem~\ref{thm:susc_holom}, combined with Lemma~\ref{limit}
and  \cite[Proposition~4.3, Theorem.~5.1]{BS}, respectively.

The last statement in the corollary, \eqref{noper}, can be
deduced\footnote{It would be interesting to have a more direct proof
  of \eqref{noper}.} from statement (c) of
Theorem~\ref{thm:susc_holom} together with the following claim (the
last term in the right-hand side is understood in the sense of
distributions, integrating against a continuous function)
\begin{equation}\label{desi}
\lim_{\eta \uparrow 1} M^\eta (\LL_t \rho_0)|_{t=0}
=-X_0' \rho_0 - X_0 (\rho_0^\mathrm{reg})' -  \sum_{k=1}^{\infty} \bar s_ k X_0(c_k) \delta_{c_k}  \, .
\end{equation}
Indeed, it suffices to note that
(recall that $c$ is not periodic for $f$)
\[
\int \phi \sum_{k=0}^\infty \sum_{j=1}^{\infty}\LL_0^k( \bar s_j
X_0(c_j) \delta_{c_j})= \sum_{k=0}^\infty\sum_{j=1}^{\infty} \bar s_j
X_0(c_j) \phi(c_{j+k})= \sum_{\ell=0}^{\infty} \phi_{c_\ell}
\sum_{j=1}^{\ell} \bar s_j X_0(c_j) \, .
\]

The desired identity \eqref{desi} will be an immediate consequence of the decomposition $\rho=\rho^\mathrm{reg}+\rho^\mathrm{sal}$, with
\eqref{rsal}, and Lemma~\ref{limit} 
after we
establish that $t \mapsto \int_I \phi(x)  (\LL_t \rho_0) (x) \, \D x$ is
differentiable at $t=0$ for any $\phi \in C^0$, with derivative
\begin{equation}\label{esta}
\partial_t \biggl ( \int_I \phi(x)  (\LL_t \rho_0) (x) \, \D x
\biggr )\biggr |_{t=0}=-
\int_I \phi \,   (X_0 \rho_0)' \, \D x \, .
\end{equation}

To show \eqref{esta}, we first recall Step~2 in the proof of
\cite[Theorem~5.1]{BS}.  For this, letting $c_{k,t}=f_t^k(c)$ and
denoting by $\BB_t$ the vector space of functions
$\varphi=\varphi^\mathrm{reg} +\sum_{k=1}^{\infty} a_k H_{c_{k,t}}$ where
$\varphi^\mathrm{reg}\in BV\cap C^0$ is supported in $I$ and is
differentiable with derivative in $BV$, and the $a_k$ are complex
numbers, we recall the invertible map $G_t \colon \BB_t \to \BB_0$
from \cite{BS}, defined by
\[
G_t(\varphi)= \varphi^\mathrm{reg} +\sum_{k= 1}^{\infty} a_k H_{c_{k}} \, .
\]
In this notation, Step~2 in the proof of
\cite[Theorem~5.1]{BS} (the proof uses $f_t=\tilde f_t$) says 
\begin{equation}\label{done}
  \partial_t \bigl ( G_t( \LL_t (G_t^{-1} \rho_0)) \bigr )
  |_{t=0} = -X_0' (\rho_0) - X_0 (\rho_0^\mathrm{reg})' \in BV \, .
\end{equation}
To use the above identity, we decompose $\LL_t \rho_0 - \rho_0$ into
\begin{align}\label{above}
(G_t( \LL_t (G_t^{-1} \rho_0))-\rho_0)
 + (\LL_t(G_t^{-1}(\rho_0)) -G_t( \LL_t (G_t^{-1} \rho_0)))
+(\LL_t \rho_0  - \LL_t(G_t^{-1}(\rho_0))) \, .
\end{align}
The first term above is handled by \eqref{done}. For the other two terms, recall that $c$ is not periodic, so $P_f=\infty$.
For the last term in  \eqref{above},  
note that, since $c_{k,t}=h_t(c_k)$, we have
(see \cite[Lemma~2.2, Remark~2.5]{BS})
\begin{equation}\label{TCEs}
\lim_{t \to 0}\frac{c_{k,t}-c_k}{t}
=-\sum_{j=0}^{\infty} \frac{X_0(c_{j+k})}{(f^{j+1})' (c_k)}=  \alpha(c_k)
\, , \quad \forall k \ge 1\, .
\end{equation}
Therefore, since $\LL_0 (\delta_{c_k})= \delta_{c_{k+1}}$, using bounded distortion
\begin{align}
  \nonumber \lim_{t \to 0} \frac{\int \phi (\LL_t \rho_0 -
    \LL_t(G_t^{-1}(\rho_0)))\, \D x}{t} &=\lim_{t \to
    0}\frac{\int \phi \LL_t (\sum_{k = 1}^{\infty } \bar s_k
    H_{c_k}- \sum_{k = 1}^{\infty } \bar s_k H_{c_{k,t}}) \, \D
    x}{t} \\
  \label{third} &= \sum_{k = 1}^{\infty}\bar s_k \alpha (c_k) \phi(c_{k+1}) 
  = \sum_{k = 1}^{\infty} \bar s_{k+1} f'(c_k) \alpha (c_k)
  \phi(c_{k+1})\, .
\end{align}
Finally, for the second term in \eqref{above}, we have
\begin{align*}
  &(\LL_t(G_t^{-1}(\rho_0)) -G_t( \LL_t (G_t^{-1} \rho_0)))
  \\ &\qquad= [\LL_t(\rho_0^\mathrm{reg}+\sum_{k =1}^{\infty}\bar
    s_k H_{c_{k,t}} ))]^\mathrm{sal} -G_t
     [\LL_t(\rho_0^\mathrm{reg}+\sum_{k = 1}^{\infty} \bar s_k
       H_{c_{k,t}} ))]^\mathrm{sal}\\ &\qquad= \biggl
     (\frac{1}{|f'_{t,-}(c)|}+\frac{1}{|f'_{t,+}(c)|}\biggr )
     \rho_0^\mathrm{reg}(c) (H_{c_{1,t}}-H_{c_1}) + \sum_{k =
       1}^{\infty} \frac{ \bar s_k} {|f'(c_{k,t})|}
     (H_{c_{k+1},t}- H_{c_{k+1}})\, .
\end{align*}
Therefore, using $\LL_0 \rho_0(c_1)=\rho_0(c_1)=-\bar s_1$ and $\rho_0^\mathrm{sal}(c)=0$,
\begin{align}\label{second}
\lim_{t \to 0} &\int \phi \frac{(\LL_t(G_t^{-1}(\rho_0)) -G_t( \LL_t (G_t^{-1} \rho_0)))}
{t} \, \D x \\
&\qquad\qquad= -\bar s_1 X_0(c_1)\phi(c_1) - \sum_{k = 1}^{\infty } \bar s_{k+1} \alpha (c_{k+1}) \phi(c_{k+1}) \, .
\end{align}
The twisted cohomological equation \eqref{TCE} implies that
the sum of \eqref{third} and \eqref{second} is
\begin{align*}
-\bar s_1 X_0(c_1) \phi(c_1)&+\sum_{k = 1}^{\infty} \bar s_{k+1} \bigl (f'(c_k) \alpha (c_k) -\alpha (f(c_{k})\bigr )\phi(c_{k+1})
\\
&\qquad \qquad= - \sum_{k = 0}^{\infty} \bar s_{k+1} X_0(c_{k+1}) \phi(c_{k+1})
=-\int \phi  (X_0 \rho_0^\mathrm{sal})'\, .
\end{align*}
This concludes the proof of \eqref{esta}, and of Corollary~\ref{lecor}.
\qed


\appendix

\section{Proof of Lemma~\ref{limit}}
\label{ll}

Without loss of generality, we set  $t_0=0$.
 Since $g$ is bounded, there exists  $A$ such that 
  \[
  \biggl| \int_1^\infty \frac{g(0) - g(-t)}{t^{1+\eta}} \,
  \D t \biggr| \leq  \int_1^\infty \frac{2A}{t^{1+\eta}} \,
  \D t < \infty \, .
  \]
  Since $g$ is differentiable at $0$, there exists  $B$ such
  that 
  \[
  \biggl| \int_0^1 \frac{g(0) - g(-t)}{t^{1+\eta}} \, \D t
  \biggr| \leq \int_0^1 \frac{B}{t^\eta} \, \D t < \infty \, .
  \]
  This shows that $M_+^\eta g (0)$ exists for all $\eta < 1$.
Exchanging $g$ with $t \mapsto g(-t)$, we get that $M_-^\eta g
  (0)$ exists for all $\eta < 1$.
  
  Let $\varepsilon > 0$. We will prove 
  $
  g'(0) - \varepsilon \leq \liminf_{\eta \uparrow 1} M_+^\eta g (0)
  \leq \limsup_{\eta \uparrow 1} M_+^\eta g (0) \leq g'(0)+
  \varepsilon$.
  Since $\varepsilon$ is arbitrary, this implies that
  $
  \lim_{\eta \uparrow 1} M_+^\eta g (0) = g'(0)$.

  Since the limit
  $
  g'(0) = \lim_{h \to 0} \frac{g(h) - g(0)}{h}
  $
  exists, we can find  $\delta > 0$ such that
  \[
  - \varepsilon \leq \frac{g(h) - g(0)}{h} - g'(0) \leq \varepsilon
\, , \, \, \, \, \forall |h| \leq \delta \, .
  \]
Next, since $0<\eta <1$, we can  write
  \begin{align*}
    M_+^\eta g (0) 
 &=
    \frac{\eta}{\Gamma (1-\eta)} \biggl( \int_0^\delta \frac{g (0)
      - g(-t)}{t} \frac{1}{t^\eta} \, \D t +
    \int_\delta^\infty \frac{g (0) - g(-t)}{t^{1+\eta}} \,
    \D t \biggr) \\ &\leq \frac{\eta}{\Gamma (1-\eta)}
    \biggl( \int_0^\delta (g'(0) + \varepsilon) \frac{1}{t^\eta} \,
    \D t + \int_\delta^\infty \frac{g(0) -
      g(-t)}{t^{1+\eta}} \,\D t \biggr) \\ & \leq
    \frac{\eta}{(1-\eta) \Gamma(1-\eta)} \delta^{1-\eta}
    (g'(0) + \varepsilon) + \frac{\eta}{\Gamma(1-\eta)}
    \int_\delta^\infty \frac{2A}{t^{1+\eta}} \, \D t \\ &=
    \frac{\eta}{\Gamma(2-\eta)} \delta^{1-\eta}
    (g'(0)+\varepsilon) + \frac{1}{\Gamma(1-\eta)} 2A
    \delta^{-\eta} \, .
  \end{align*}
  Using that $\Gamma (1) = 1$, $\lim_{\eta \uparrow 1} \Gamma(1-\eta) =
  \infty$ and $\lim_{\eta \uparrow 1} \delta^{1-\eta} = 1$, this shows
  that
  \[
  \limsup_{\eta \uparrow 1} M_+^\eta g (0) \leq g'(0) + \varepsilon \, .
  \]
  In the same way, we get
  \[
  M_+^\eta g (0) \geq \frac{\eta}{\Gamma(2-\eta)}
  \delta^{1-\eta} (g'(0)-\varepsilon) - \frac{1}{\Gamma(1-\eta)}
  2A \delta^{-\eta} \, ,
  \]
  and thus
  $                                                                    
  \liminf_{\eta \uparrow 1} M_+^\eta g (0) \geq g'(0) - \varepsilon 
  $.
  Using $t \mapsto g(-t)$, we get 
  $\lim_{\eta \uparrow 1} M_-^\eta g (0) = - g'(0)$.


\section{Proof of the uniform Lasota--Yorke estimate (Lemma~\ref{lem:lasota_yorke})}
\label{AA}

Note that in view of \eqref{unifop} it is enough to show that for any
$\tilde \Lambda < \Lambda$, there exist
 $p_0\in (1, p(f))$ and $\ve_0\le \ve$ such that for any 
    $p \in (1, p_0)$ and  $0 \le \tilde\tau < \tau < \frac1p$
  there exist $N \ge 1$ and a  constant $C(N)<\infty$ 
   such that
\begin{align}\label{eq:lasota_yorke'}
  \| \LL_t^N \varphi \|_{\HH^{\tau,p}} \leq 
   \tilde \Lambda^{(- \tau - 1 +   \frac1p)N}
  \| \varphi \|_{\HH^{\tau,p}} + C(N) \| \varphi
  \|_{\HH^{\tilde\tau,p}} \, , \quad \forall\,  |t|<\ve_0\, , \, \, \forall \, \varphi\, .
\end{align}
(To conclude for general $n=jN+k$, use a geometric series for $jN$  and \eqref{unifop} for $k\le N-1$.)

Our strategy to prove
\eqref{eq:lasota_yorke'}  is to follow Thomine's argument
\cite{Thomine}, using a ``zoom'':\footnote{The zoom and
  localisation can perhaps  be replaced by the fragmentation and
  reconstitution lemmas in \cite[Chapter 2]{BaladiZeta}.}  For this,
note that Lemma~\ref{lem:compositionbysmooth} gives $C_\mathrm{zoom} =
C(\tau,p) > 0$ such that for any positive number $Z>0$, setting
$\ZZ(x) = Z \cdot x$,
  \begin{align}\label{eq:sobolev_scaling}
    \Vert \varphi \circ \ZZ \Vert_{\HH^{\tau,p}} \le C_\mathrm{zoom}
    Z^{\tau - \frac1p} \Vert \varphi \Vert_{\HH^{\tau,p}} +
    C_\mathrm{zoom} Z^{-\frac1p} \Vert \varphi \Vert_{L^p} \, ,
    \forall n \, .
  \end{align}
Define $\ZZ_n \colon \bR \to \bR$ by $\ZZ_n(x) =
  Z_n \cdot x$, where\footnote{It is essential that $Z_n$ will be chosen uniformly in $t$ for each fixed $n$.} the sequence of zooms $Z_n> 1$, for $n \ge 1$, will be specified later. 
  We define an auxiliary zoomed norm by
  $\Vert \varphi \Vert_n = \Vert \varphi \circ \ZZ_n^{-1} \Vert_{\HH^{\tau,p}}$.
  which is equivalent with $\Vert \cdot \Vert_{\HH^{\tau,p}}$,  for each fixed $n$.

  Let $\psi \colon \bR \to [0,1]$ be a $C^\infty$-function supported
  in $(-1,1)$ with $\sum_{m\in \bZ} \psi_m = 1$, where $\psi_m(x) =
  \psi(x+m)$. We define
  \[
  \psi_{m,n}(x) = \psi_m \circ \ZZ_n (x)= \psi_m (Z_n \cdot x) \, ,
  \quad m\in \bZ\, , n \in \bZ_+ \, .
  \]
Note that the support of $\psi_{m,n}$ becomes small if $Z_n$ is large.
For any $x \in I$ and any $n\ge 1$, there are
  at most three integers $m$ such that $\psi_{m,n}(x)\ne 0$,
and we may decompose
  \begin{align}\label{frag}
    \LL_t^n \varphi(x) = \sum_{m \in \bZ} \LL_t^n (\psi_{m,n} \varphi) (x)
    = \sum_{m \in \bZ} \sum_{J \in \cA(t, n)} 
\biggl (1_J \psi_{m,n} \frac{ \varphi}{ |(f^n_t)' | }\biggr )
 \circ  f^{-n}_{t, J} (x)\, ,
  \end{align}
where $\cA(t, n)$ is the
  partition of monotonicity associated to $f_t^n$,  while
   $f^{-n}_{t, J}$ is the extension to $I$ of the inverse of $f^{n}_t |_J$.

For any $t$, the intersection multiplicity 
\[
\sup_x \# \{\, (m ,J) \mid m \in \bZ\, , \, J \in \cA(t, n)\, , \, 
 x \in  f^n_t(J \cap \supp \psi_{m, n}) \,\}
\]
is bounded by $3\cdot  2^n$ (this is called the ``complexity
at the end'' in Thomine's paper).
 Therefore, by Lemma~3.5 in \cite{Thomine}
there is $C = C(\tau,p) >
  0$ and for each $n$ there exists $C_n= C(\tau,p, \psi,n)$ such that
  \begin{align}\nonumber
    \| \LL_t^n \varphi \|_{n}^p 
&\le C \sum_{m \in \bZ}   \sum_{J \in \cA(t, n)} 
 2^{n(p-1)} \biggl\| 
\biggl ( 1_J\psi_{m,n}  \frac{ \varphi}{ |(f^n_t)' | }\biggr )
 \circ  f^{-n}_{t, J}\biggr\|^p_{n} \\
\nonumber&\qquad\qquad\qquad+
C_n \sum_{m \in \bZ}   \sum_{J \in \cA(t, n)}  
 \biggl\| 
\biggl ( 1_J \psi_{m,n}  \frac{ \varphi}{ |(f^n_t)' | }\biggr )
 \circ  f^{-n}_{t, J} \circ \ZZ_n^{-1}   \biggr\|^p_{L^p}  \\ 
\label{one} &\le C \sum_{m \in \bZ} \sum_{J \in  \cA(t,n)} 
2^{n(p-1)} \biggl\| \biggl (1_J \psi_{m,n}  \frac{ \varphi}{ |(f^n_t)' | }\biggr )
 \circ  f^{-n}_{t, J}
  \biggr\|^p_{n} 
+ C_n Z_n \|  \varphi \|_{L^p}^p\, .
  \end{align}
  
  To control the double sum in  \eqref{one}, we write
  \begin{align*}
   (1_J \psi_{m,n} \varphi) \circ f^{-n}_{t, J} \circ \ZZ_n^{-1} = \tilde{\varphi}_{m,n,J} \circ \ZZ_n
    \circ f^{-n}_{t, J} \circ \ZZ_n^{-1}\, ,
  \end{align*}
  where $\tilde{\varphi}_{m,n,J} = (1_J \psi_{m,n} \varphi) \circ \ZZ_n^{-1}$. Since $\sup_t |f''_t|<\infty$  and $\inf _t \inf |f'_t|>1$, a
 standard bounded distortion\footnote{
We mean $|1-(f^n_t)'(\bar x)/(f^n_t)'(y)|\le K(f''_t, f'_t)\cdot  d(f^n_t(\bar x), f^n_t(y))$,
and the zoom $\ZZ_n$ ensures $K(f_t'', f_t')\cdot  d(f^n_t(\bar x), f^n_t(y))<1/2$.}  estimate gives for any $n$  a large enough $Z_n$ 
(uniformly in $t$, $m$, and $J$)
 such that, for each $t$ and $J\in \cA(t,n)$, 
and for any $y \in \text{supp}(1_J \psi_{m,n})$, we have,
  \begin{align*}
    \frac{1}{ 2| (f^n_t)' (y) |} \le
 \biggl |\frac{1}{(f^n_t)'  \circ f^{-n}_{t, J} \circ \ZZ_n^{-1} (x)}
    \biggr | 
\le \frac{3}{2| (f^n_t)'( y) |} \, , \, \forall x \in \supp((1_J \psi_{m,n} ) \circ f^{-n}_{t, J} \circ \ZZ_n^{-1})\, .
  \end{align*}
Applying
  Lemma~\ref{lem:compositionbysmooth} to 
  $\ZZ_n \circ f^{-n}_{t, J} \circ \ZZ_n^{-1}$, we obtain
 $C=C(\tau, p)$ independent of $n$ and $t$, and $C_n=C_n(p)$ independent of $t$
such that
\begin{align}\label{this}
  & \| (1_J \psi_{m,n} \varphi) \circ f^{-n}_{t, J} \circ
  \ZZ_n^{-1}\|_{\HH^{\tau,p}}\\ \nonumber &\qquad\qquad\le C |
  (f^n_t)' (y) |^{\frac1p} (\lambda_n(f_t))^{-\tau} \|
  \tilde{\varphi}_{m,n,J} \|_{\HH^{\tau,p}} + C_n \|
  \tilde{\varphi}_{m,n,J} \|_{L^p} \, , \, \, \forall |t| \le \ve
  \, .
\end{align}

Since $1/p<1$, the bound \eqref{this} together with
Lemma~\ref{lem:multiplicationbysmooth} for\footnote{Since the $C^1$ norm of $1/(f^n_t)'$ is bounded uniformly in $n$ and $t$
by some  $D$, choosing a suitable zoom $\ZZ_n$  allows to bound the $C^1$ norm of $(1_J\psi_{m,n}/(f^n_t)')\circ \ZZ_n^{-1}$
by $\lambda_n(f_t)^{-1}+D/Z_n\le C\lambda_n(f_t)^{-1}$.} $\gamma=1$ imply
\begin{align*}
  \biggl\|\biggl (1_J \psi_{m,n} \frac{ \varphi}{ |(f^n_t)'|} \biggr )
  \circ f^{-n}_{t, J} \biggr\|_{n} &\le C | (f^n_t)' (y) |^{\frac1p} (\lambda_n(f_t))^{-\tau-1} \| \tilde{\varphi}_{m,n,J}
  \Vert_{\HH^{\tau,p}} +C_n \| \tilde{\varphi}_{m,n,J} \|_{L^p}
  \\ &\le C (\lambda_n(f_t))^{-( \tau +1 - \frac1p ) }\|
  \tilde{\varphi}_{m,n,J} \|_{\HH^{\tau,p}} +C_n \|
  \tilde{\varphi}_{m,n,J} \|_{L^p} \, .
  \end{align*}

Therefore, by the convexity inequality $(|a|+|b|)^p\le
2^{p-1}(|a|^p+|b|^p)$, we have
\begin{multline}\label{two}
  \biggl\Vert \biggl (1_J \psi_{m,n} \frac{ \varphi}{ |(f^n_t)'|}
  \biggr ) \circ f^{-n}_{t, J} \biggr\Vert_{n}^p \\ \le C(p)
  (\lambda_n(f_t))^{-( p(\tau + 1) - 1 ) }\Vert
  \tilde{\varphi}_{m,n,J} \Vert_{\HH^{\tau,p}}^p + C_n \Vert
  \tilde{\varphi}_{m,n,J} \Vert_{L^p}^p \, ,
\end{multline}
where $C(p)$ and $C_n$ are independent of $t$.  By the Strichartz
Lemma~\ref{lem:Strichartz} (we use here $0\le \tau <1/p$) and the
(classical) localisation Theorem~2.4.7 in \cite{Triebel} (or Lemma~3.4
in \cite{Thomine} there exists $K = K(\psi, \tau,p) > 0$ such that for
all $m, n$ and $\varphi$
\begin{align}\nonumber
  \sum_{m \in \bZ} \| \tilde{\varphi}_{m,n,J} \|_{\HH^{\tau,p}}^p &=
  \sum_{m \in \bZ} \| (1_J \cdot (\psi_{m}\circ \ZZ_n) \cdot \varphi)
  \circ \ZZ_n^{-1} \|_{\HH^{\tau,p}}^p \\ & \le K \sum_{m \in \bZ} \|
  ( (\psi_{m}\circ \ZZ_n) \cdot \varphi) \circ \ZZ_n^{-1}
  \|_{\HH^{\tau,p}}^p
  \label{three}  \le K^2 \| \varphi \circ \ZZ_n^{-1}
  \|_{\HH^{\tau,p}}^p = K^2 \| \varphi \Vert_{n}^p\, .
\end{align}

For a fixed map $f_0$ and for any $n$, if $Z_n$ is large enough, then
for any $m$ there are at most two intervals $J\in \cA(0,n)$ such that
$(1_J \psi_{m,n}) \circ f^{-n}_J$ is not identically zero. (This is
the ``complexity at the beginning'' in Thomine's argument.)  For a
perturbation $(f_t)$, caution is needed, especially (but not only) if
$c$ is periodic for $f$ (see footnote \ref{goodness}).

Assume first that $c$ is not periodic.  Fix $\tilde \Lambda < \bar
\Lambda < \Lambda$, choose $p_0\in (1,p(f))$ and $n_0 \ge 1$ such that
\begin{equation}\label{startt}
  2^{p_0-1} \cdot (\lambda_{n_0}(f_t))^{- p_0(\tau + 1) - 1 } \le \bar
  \Lambda^{n_0(- p_0(\tau + 1) - 1 )} \, , \,\, \forall |t| <\ve_0 \,
  .
\end{equation}
Next, for $p \in (1,p_0)$, let $C(p)$ be the constant from
\eqref{two}, let $K(p)$ be the constant from \eqref{three}. Then,
there exists $N\ge n_0$ such that
\begin{equation}\label{largeN}
  2 \cdot C(p) \cdot K(p)^2 \cdot 2^{N(p-1)} \cdot
  (\lambda_N(f_t))^{-( p(\tau + 1) - 1 ) } < \tilde \Lambda ^{-N(
    p(\tau + 1) - 1 ) } \, .
\end{equation}
Now, since $c$ is not periodic, (see e.g.\ \cite[p.~376]{BY}) there
exists $\ve'=\ve '(N)<\ve_0$ such that for any $|t|\le \ve'(N)$ there
is a natural bijection $\JJ_t=\JJ_{t,N}$ between $\cA(0,N)$ and
$\cA(t,N)$. In addition, we may ensure that $ |f^N_t (\JJ_t(J))|
\ge{|f^N(J)|}/{2} $ for all $J\in \cA(0,N)$ and all $|t|\le \ve'(N)$.
By bounded distortion, there exists $Z_N$ such that for all $|t|\le
\ve'(N)$ and any $m$ there are at most two intervals $J \in \cA(t,N)$
such that $(1_J \psi_{m,N} \varphi) \circ f^{-N}_{t, J}$ is not
identically zero.  Therefore, it follows from \eqref{two} and
\eqref{three} that
\begin{align}
 \nonumber & \sum_{m \in \bZ} \sum_{J \in \cA(t,N)} \biggl\|
 \frac{(1_J \psi_{m,N} \varphi) \circ f^{-N}_{t, J} }{ (f^N_t)'
   \circ f^{-N}_{t, J} } \biggr\|_{N}^p \\ &\qquad \le \sum_{m
   \in \bZ} \sum_{J \in \cA(t,N)} \biggl( C (\lambda_N(f_t))^{-(
   p(\tau + 1) - 1 ) }\| \tilde{\varphi}_{m,N,J}
 \|_{\HH^{\tau,p}}^p + C_N \| \tilde{\varphi}_{m,N,J} \|_{L^p}^p
 \biggr) \\ &\qquad \le C K^2 (\lambda_N(f_t))^{-( p(\tau
   + 1) - 1 ) } \| \varphi \|_{N}^p + 2 C_N \| \varphi \|_{L^p}^p
 \, , \quad \forall |t|\le \ve'(N) \, .
\end{align}
Recalling \eqref{one}, we have shown that for all $|t|\le \ve'(N)$,
\begin{align*}
  \Vert \LL_t^N \varphi \Vert_{N}^p \le C K^2 2^{N(p-1)}
  (\lambda_N(f_t))^{-( p(\tau + 1) - 1 ) }\Vert \varphi \|_{N}^p + 2
  C_N \| \varphi \|_{L^p}^p \, .
\end{align*}
Finally, we use \eqref{eq:sobolev_scaling} twice to obtain 
\begin{align}\label{final}
  \Vert \LL_t^N \varphi \Vert_{\HH^{\tau,p}} \le 2 C K^2
  2^{N(1-\frac1p)} (\lambda_N(f_t))^{-( \tau + 1 - \frac1p ) }\|
  \varphi \Vert_{\HH^{\tau,p}} + 2 ( C_N + C_\mathrm{zoom}) \Vert
  \varphi \Vert_{L^p} \, ,
\end{align}
for all $|t|\le \ve'(N)$, which together with our choice
\eqref{largeN} of $N$ and the Sobolev embedding theorem
completes the proof of
\eqref{eq:lasota_yorke'} if $c$ is not periodic, for $\ve_0:=\ve'(N)$.

\smallskip
If $c$ is periodic but all the $f_t$ are topologically conjugated to
$f_0$, the argument in the non periodic case can be applied.

\smallskip
Otherwise, if $c$ is periodic of minimal period $P_f \ge 2$, 
  we first take $\tilde \Lambda < \bar \Lambda < \Lambda$ and
  choose $p_0 \in (1, p(f))$ and $n_0=k_0 P_f$ so that 
  \begin{equation}\label{start2}
  2^{k_0} 2^{p_0-1} \cdot  (\lambda_{n_0}(f_t))^{- p_0(\tau + 1) - 1 } \le \bar \Lambda^{n_0(-
  	p_0(\tau + 1) - 1 )} \, , \, \, \forall |t|< \ve _0 \, .
  \end{equation}
Then we take $p \in (1, p_0)$, and we fix $N=k_1P_f \ge n_0$ large enough
such that 
\begin{equation}\label{largeN'}
  2^{k_1}   \cdot C(p) \cdot K(p)^2   \cdot 2^{N(p-1)} 
 \cdot (\lambda_N(f_t))^{- p(\tau + 1) - 1  }
< \tilde \Lambda ^{-N( p(\tau + 1) - 1 ) } \, , \,\,\forall |t|< \ve _0  \, .
\end{equation} 
Now, (\cite[p.~376]{BY}), for $N=k_1 P_f$, there exists 
$\ve''=\ve ''(N)<\ve_0$ such that for any $|t|\le \ve''(N)$,
there is a natural injection
$\JJ_t=\JJ_{t,N}$
from $\cA(0,N)$ to $\cA(t,N)$ such that
$
|f^N_t (\JJ_t(J))| \ge{|f^N(J)|}/{2} 
$ for all $J\in \cA(0,N)$ and all $|t|\le \ve'(N)$, and,
in addition, between any two intervals of $\cA(t,N)$
 in the image of $\JJ_{t,N}$, there are
at most $2^{k_1}-2$  intervals of $\cA(t,N)$ which are \emph{not}
 in the image of $\JJ_{t,N}$.
Therefore, there exists $Z_N$ (uniform in $t$) such that for any $|t|\le \ve''(N)$
 and any $m \in \bZ$, there are at most $2^{k_1}$
  intervals $J \in \cA(t,N)$ such that $(1_J \psi_{m,N} \varphi) \circ f^{-N}_{t, J}$ is
  not identically zero. Therefore, \eqref{final} is replaced by 
\begin{align*}
    \| \LL_t^{N} \varphi \|_{\HH^{\tau,p}} \le 2^{k_1} [ C K^2
      2^{N(1-\frac1p)} (\lambda_N(f_t))^{-( \tau + 1 - \frac1p ) }] \|
    \varphi \|_{\HH^{\tau,p}} +2 (C_{N} + C_\mathrm{zoom}) \| \varphi
    \|_{L^p} \, ,
  \end{align*}
for $N=k_1 P_f$ and  all $|t|<\ve''(N)$.
Recalling \eqref{defLe} and \eqref{largeN'}, we may conclude  for
 $\ve_0:=\ve''(N)$.  

\end{document}